 \newcommand \E {\mathbb{E}}
 \newcommand \PP {\mathcal{P}}
  \newcommand \rmP {\mathrm{P}}
 \newcommand \R {\mathbb{R}}
 \newcommand \NN {\mathcal{N}}
 \newcommand \MM {\mathcal{M}}
 \newcommand \N {\mathbb{N}}
\newcommand \ie {{\em i.e. }}
\newcommand \Id {\mathrm{I}_d}
\newcommand \1 {\mathbf{1}}
 \newcommand \KL {\mathrm{KL}}
\newlength{\tempheight}
\newlength{\tempwidth}
\newcommand{\rowname}[1]
{\rotatebox{90}{\makebox[\tempheight][c]{\textbf{#1}}}}
\newcommand{\columnname}[1]
{\makebox[\tempwidth][c]{\textbf{#1}}}
\newtheorem{defi}{Definition}
\newtheorem{Corollary}{Corollary}
\newtheorem{prop}{Proposition}
\newtheorem*{propstar}{Proposition}
\newtheorem*{lemmastar}{Lemma}
\title{A Wasserstein-type distance in the space of
  Gaussian Mixture Models\thanks{Submitted to the editors DATE.
\funding{This work was funded by the French National Research Agency
  under the grant ANR-14-CE27-0019 - MIRIAM.}}}
\author{Julie Delon\thanks{MAP5, Universit\'e de Paris, and Institut
    Universitaire de France (IUF)
  (\email{julie.delon@parisdescartes.fr})} \and Agnès
Desolneux\thanks{Centre Borelli, CNRS and ENS Paris-Saclay, France
  (\email{agnes.desolneux@math.cnrs.fr})} }
\begin{document}
\maketitle

\begin{abstract}
In this paper we introduce a Wasserstein-type distance on the set of
Gaussian mixture models. This distance is defined by restricting the
set of possible coupling measures in the optimal transport problem to
Gaussian mixture models. We derive a
very simple discrete formulation for this distance, which makes it suitable for
high dimensional problems. We also study the corresponding
multi-marginal and barycenter formulations.  We show some properties of
this Wasserstein-type distance, and we illustrate its practical use
with some examples in image processing.
\end{abstract}

\begin{keywords}
 optimal transport, Wasserstein distance, Gaussian mixture model,
 multi-marginal optimal transport, barycenter, image processing applications
\end{keywords}

\begin{AMS}
65K10, 65K05, 90C05,  62-07, 68Q25, 68U10, 68U05,  68R10
\end{AMS}

\section{Introduction}

Nowadays, Gaussian Mixture Models (GMM) have become ubiquitous in
statistics and machine learning. These models are especially useful
in applied fields to represent probability distributions of real
datasets. Indeed, as linear combinations of Gaussian distributions,
they are perfect to model complex multimodal densities and can
approximate any continuous density when the numbers of components is 
chosen  large enough. Their parameters are also easy to infer with
algorithms such as the 
Expectation-Maximization (EM) algorithm~\cite{dempster1977maximum}. For instance, in
image processing, a large body of works use GMM to represent patch
distributions in images\footnote{Patches are small image pieces, they
  can be seen as vectors in a high dimensional space.}, and use these
distributions for various applications, such as image
restoration~\cite{ZoranWeiss2011,teodoro2015single,PLE,SPLE,houdard2018high,delon2018gaussian}
or texture synthesis~\cite{galerne2017semi}.   

The optimal transport theory provides mathematical tools to compare or
interpolate between probability distributions. For two probability
distributions $\mu_0$ and $\mu_1$ on $\R^d$ and a positive cost
function $c$ on $\R^d\times \R^d$, the goal is to solve the optimization problem 
\begin{equation}
  \label{eq:transport_definition}
\inf_{Y_0\sim \mu_0; Y_1\sim \mu_1} \E \left(c(Y_0,Y_1)\right),
\end{equation}
where the notation $Y\sim\mu$ means that $Y$ is a random variable with
probability distribution $\mu$. When $c(x,y) = \|x-y\|^p$ for $p\geq 1$,
Equation~\eqref{eq:transport_definition} (to a power $1/p$) defines
a distance between probability distributions that have a moment of order
$p$, called the Wasserstein distance $W_p$.

While this subject has gathered a lot of theoretical work
(see \cite{villanitopics, villani2008optimal, santambrogio2015optimal} for three 
reference monographies on the topic), its
success in applied fields was slowed down for many years by the
computational complexity of numerical algorithms which were not always
compatible with large amount of data. In recent years, the development
of efficient numerical approaches has been a game changer, widening
the use of optimal transport to various applications notably in image
processing, computer graphics and
machine learning~\cite{peyre2019computational}. However, computing Wasserstein distances or optimal
transport plans remains intractable when the dimension of the problem
is too high. 

Optimal transport can be used to compute distances or geodesics
between Gaussian mixture models, but optimal transport plans between
GMM, seen as probability distributions on a higher dimensional space, are
usually not Gaussian mixture models themselves, and the
corresponding Wasserstein geodesics between GMM do not preserve the
property of being a GMM. In order to keep the good properties of these models, 
we define in this paper a variant of the Wasserstein distance by
restricting the set of possible coupling measures to Gaussian mixture
models. The idea of restricting the set of possible coupling measures
has already been explored for instance
in~\cite{bionnadal2019}, where the distance is defined on
the set of the probability distributions of strong solutions to
stochastic differential equations. The goal of the authors is to
define a distance which keeps the good properties of $W_2$ while being
numerically tractable.  

In this paper, we  show that restricting the set of possible coupling
measures to Gaussian mixture models transforms the original infinitely
dimensional optimization problem into a finite dimensional problem
with a simple discrete formulation, depending only on the parameters
of the different Gaussian distributions in the mixture. When the
ground cost is simply $c(x,y) = \|x-y\|^2$, this yields a geodesic
distance, that we call $MW_2$ (for Mixture Wasserstein), which is obviously larger than $W_2$,
and is always upper bounded by $W_2$ plus a term depending only on the
trace of the covariance matrices of the Gaussian components in the mixture. The
complexity of the corresponding discrete optimization problem does not
depend on the space dimension, but only on the number of components in
the different mixtures, which makes it particularly suitable in
practice for high dimensional problems. Observe that this equivalent discrete
formulation has been proposed twice recently in the machine
learning literature, but with a very different point of view, by two independent
teams~\cite{chen2016distance,chen2019aggregated} and
\cite{chen2017optimal,chen2019optimal}.

Our original contributions in this paper are the following:
\begin{enumerate}
\item We derive an explicit formula for the optimal
  transport between two GMM restricted to GMM couplings, and we show
  several properties of the resulting distance, in particular how it
  compares to the classical Wasserstein distance.
\item We study the multi-marginal 
and barycenter formulations of the problem, and show the link between
these formulations.
\item We propose a generalized formulation to be used on distributions
  that are not GMM.
\item We provide two applications in image processing, respectively to
  color transfer and texture synthesis.
\end{enumerate}

The paper is organized as follows. Section~\ref{sec:background} is a reminder on Wasserstein distances and barycenters between probability measures on $\R^d$. We also recall the explicit formulation of $W_2$ between Gaussian distributions. In Section~\ref{sec:GMM_prop}, we recall some properties of Gaussian mixture models, focusing on an identifiabiliy property that will be necessary for the rest of the paper. We also show that optimal transport plans for $W_2$ between GMM are generally not GMM themselves. Then, Section~\ref{sec:MW2} introduces the $MW_2$ distance and derives the corresponding discrete formulation.  Section~\ref{sec:comparison} compares $MW_2$ with $W_2$. 
Section~\ref{sec:multimarginal} focuses on the corresponding
multi-marginal and barycenter formulations.   In
Section~\ref{sec:assignment}, we explain how to use $MW_2$ in practice
on distributions that are not necessarily GMM.
We conclude in
Section~\ref{sec:applications} with two applications of the distance $MW_2$ to image processing.     
To help the reproducibility of the results we present in this paper,
we have made our Python codes available on the Github website \url{https://github.com/judelo/gmmot}.

\section*{Notations}

We define in the following some of the notations that will be used in the paper.
\begin{itemize}
  \item The notation $Y\sim\mu$ means that $Y$ is a random variable with
probability distribution $\mu$.
\item If $\mu$ is a positive measure on a space $\mathcal{X}$ and $T:\mathcal{X}\to\mathcal{Y}$ is an
  application, $T\#\mu$ stands for the push-forward measure of $\mu$
  by $T$, {\em i.e.} the measure on $\mathcal{Y}$ such that
  $\forall A \subset \mathcal{Y} $, $(T\#\mu)(A) = \mu(T^{-1}(A))$.

  \item The notation $\mathrm{tr}(M)$ denotes the trace of the matrix $M$.
 \item The notation $\mathrm{Id}$ is the identity application.
\item $\langle\xi,\xi'\rangle $ denotes the Euclidean scalar product between $\xi$ and $\xi'$ in $\R^d$
\item $\MM_{n,m}(\R)$ is the set of real matrices with $n$ lines and
  $m$ columns, and we denote by $ \MM_{n_0,n_1,\dots,n_{J-1}}(\R)$ the set of
  $J$ dimensional tensors of size $n_k$ in dimension $k$.
\item $\1_n = (1,1,\dots,1)^t$ denotes a column vector of ones of length $n$.
\item For a given vector $m$ in $\R^d$ and a $d\times d$ covariance
  matrix $\Sigma$, $g_{m,\Sigma}$ denotes the density of the Gaussian
  (multivariate normal) distribution $\mathcal{N}(m,\Sigma)$.  
\item When $a_i$ is a finite sequence of $K$ elements (real numbers, vectors
  or matrices), we denote its elements as $a_i^0,\ldots, a_i^{K-1}$. 
\end{itemize}

\section{Reminders: Wasserstein distances and barycenters between probability measures on $\R^d$}
\label{sec:background}
Let $d\geq 1$ be an integer. We recall in this section the definition
and some basic properties of the Wasserstein distances between probability measures on $\R^d$. 
We write $\mathcal{P}(\R^d)$ the set probability measures on $\R^d$. For $p\geq 1$, the Wasserstein space $\mathcal{P}_p(\R^d)$ is defined as the set of probability measures $\mu$ with a finite moment of order $p$, {\em i.e.} such that  
\[\int_{\R^d} \|x\|^pd\mu(x) < +\infty,\]
with $\|.\|$ the Euclidean norm on $\R^d$.\\
For $t\in[0,1]$, we define $\mathrm{P}_t:\R^d\times \R^d \rightarrow \R^d$ by 
$$\forall x,y \in\R^d , \quad \rmP_t(x,y) = (1-t) x + t y \in\R^d .$$
Observe that $\mathrm{P}_0$ and $\mathrm{P}_1$ are the projections from  $\R^d\times \R^d$ onto $\R^d$ such that $\mathrm{P}_0(x,y) = x$ and $\mathrm{P}_1(x,y) = y$.  

\subsection{Wasserstein distances}

Let $p\geq 1$, and let $\mu_0, \mu_1$ be two probability measures in $\mathcal{P}_p(\R^d)$.  
 Define $\Pi(\mu_0,\mu_1)\subset \mathcal{P}_p(\R^d\times \R^d)$ as
 being the subset of probability distributions $\gamma$ on $\R^d\times \R^d$ with marginal distributions $\mu_0$ and $\mu_1$, \ie such that $\mathrm{P}_0\# \gamma = \mu_0$ and  $\mathrm{P}_1\# \gamma = \mu_1$. 
The \textit{$p$-Wasserstein distance} $W_p$ between $\mu_0$ and $\mu_1$ is defined as 
\begin{equation}
  \label{eq:wasserstein_definition}
   W_p^p(\mu_0,\mu_1) :=  \inf_{Y_0\sim \mu_0; Y_1\sim \mu_1} \E \left(\|Y_0 - Y_1\|^p\right) =  \inf_{\gamma \in \Pi(\mu_0,\mu_1)} \int_{\R^d\times\R^d} \|y_0-y_1\|^pd\gamma(y_0,y_1).
 \end{equation}
 This formulation is a special case of~\eqref{eq:transport_definition}
 when $c(x,y) = \|x-y\|^p$. 
 It can be shown (see for instance~\cite{villani2008optimal}) that
 there is always a couple $(Y_0,Y_1)$ of random variables which
 attains the infimum (hence a minimum) in the previous energy. Such a
 couple is called an \textit{optimal coupling}. The probability distribution $\gamma$ of
 this couple is called an \textit{optimal transport plan} between $\mu_0$ and $\mu_1$. This plan distributes all the mass of the distribution $\mu_0$ onto the distribution $\mu_1$ with a minimal cost, and the quantity $W_p^p(\mu_0,\mu_1)$ is the corresponding total cost. 

As suggested by its name ($p$-Wasserstein distance), $W_p$ defines a
metric on $\mathcal{P}_p(\R^d)$. It also metrizes the weak
convergence\footnote{A sequence $(\mu_k)_k$ converges weakly to $\mu$ in
  $\mathcal{P}_p(\R^d)$ if  it converges to $\mu$ in the sense of
  distributions and if $\int \|y\|^pd\mu_k(y)$ converges to $\int
  \|y\|^pd\mu(y)$.} in $\mathcal{P}_p(\R^d)$
(see~\cite{villani2008optimal}, chapter 6). It follows that $W_p$ is
continuous on $\PP_p(\R^d)$ for the topology of weak convergence. 

From now on, we will mainly focus on the case $p=2$,
since $W_2$ has an explicit formulation if $\mu_0$ and $\mu_1$ are
Gaussian measures.

\subsection{Transport map, transport plan and displacement interpolation}  
Assume that $p=2$. When $\mu_0$ and $\mu_1$ are two probability distributions on $\R^d$ and assuming that $\mu_0$ is absolutely continuous, then it can be shown that the optimal transport plan $\gamma$ for the problem~\eqref{eq:wasserstein_definition} is unique and has the form 
\begin{equation}
\gamma = (\mathrm{Id},T)\# \mu_0,\label{eq:transport_map}
\end{equation}
where $T:\R^d \mapsto \R^d$ is an application called {\it optimal transport map} and satisfying $T\# \mu_0=\mu_1$ (see~\cite{villani2008optimal}).

If $\gamma$ is an optimal transport plan for $W_2$ between two probability distributions $\mu_0$ and
$\mu_1$, the path $(\mu_t)_{t\in[0,1]}$ given by 
$$\forall t\in[0,1], \quad \mu_t := \rmP_t\# \gamma $$
defines a constant speed geodesic in
$\mathcal{P}_2(\R^d)$ (see for instance~\cite{santambrogio2015optimal} Ch.5, Section 5.4).
The path $(\mu_t)_{t\in[0,1]}$ is called the displacement interpolation
between $\mu_0$ and $\mu_1$ and it satisifes 
\begin{equation}
  \label{eq:ot_geodesic}
\mu_t \in \, \, \mathrm{argmin}_\rho \,\,  (1-t) W_2(\mu_0,\rho)^2 + t
W_2(\mu_1,\rho)^2 .
\end{equation}

This interpolation, often called Wasserstein barycenter in the
literature, can be easily extended to more than two probability
distributions, as recalled in the next paragraphs.

\subsection{Multi-marginal formulation and barycenters}

For $J\geq2$, for a set of weights $\lambda = (\lambda_0,\dots,\lambda_{J-1})\in
(\R_+)^J$ such that $\lambda \1_{J}= \lambda_0+\ldots + \lambda_{J-1}
=1$ and for $x = (x_0,\dots,x_{J-1})\in (\R^d)^J$,  we write  
\begin{equation}
B(x) = \sum_{i=0}^{J-1}\lambda_i x_i = \mathrm{argmin}_{y\in\R^d}
\sum_{i=0}^{J-1}\lambda_i \| x_i -y\|^2
\label{eq:barycenter}
\end{equation}
the barycenter of the $x_i$ with  weights $\lambda_i$. 

For $J$ probability distributions $\mu_0,\mu_1\dots,\mu_{J-1}$ on $\R^d$, we say that $\nu^*$ is the barycenter of the $\mu_j$ with weights $\lambda_j$ if $\nu^*$ is solution of 
  \begin{equation}
    \label{eq:W2_bary}
    \inf_{\nu \in \PP_2(\R^d)} \sum_{j=0}^{J-1} \lambda_j W_2^2(\mu_j,\nu).
  \end{equation}

Existence and unicity of barycenters for $W_2$ has been studied in
depth by Agueh and Carlier in~\cite{agueh2011barycenters}. They show
in particular that if one of the $\mu_j$ has a density, this
barycenter is unique. They also show that the solutions of the
barycenter problem are related to the solutions of the
multi-marginal transport problem (studied by Gangbo and \'Swi\'ech in~\cite{gangbo1998optimal})   
{\small \begin{eqnarray}
  mmW_2^2(\mu_0,\dots,\mu_{J-1})
& = & \inf_{\gamma\in\Pi(\mu_0,\mu_1,\dots,\mu_{J-1})} \int_{\R^d\times \dots \times \R^d}\frac{1}{2}
      \sum_{i,j=0}^{J-1}\lambda_i\lambda_j \| y_i-y_j\|^2
      d\gamma(y_0,y_1,\ldots,y_{J-1}) , \label{eq:W2_multimarge_definition}
\end{eqnarray}}
where $\Pi(\mu_0,\mu_1,\dots,\mu_{J-1})$ is the set of probability
measures on $(\R^d)^{J}$ having $\mu_0,\mu_1,\dots,\mu_{J-1}$ as
marginals.  More precisely, they show that
if~\eqref{eq:W2_multimarge_definition} has a solution $\gamma^*$,
then $\nu^* = B\# \gamma^*$ is a solution of~\eqref{eq:W2_bary}, and
the infimum of~\eqref{eq:W2_multimarge_definition} 
and~\eqref{eq:W2_bary} are equal.

\subsection{Optimal transport between Gaussian distributions}

Computing optimal transport plans between probability distributions is
usually difficult. In some specific cases, an  explicit solution is
known. For instance, in the one dimensional ($d=1$) case, when the cost $c$ is a convex function of the Euclidean distance on the line, the optimal
plan consists in a monotone rearrangement of the distribution $\mu_0$
into the distribution $\mu_1$ (the mass is transported monotonically
from left to right, see for instance Ch.2, Section 2.2 of
\cite{villanitopics} for all the details). Another case where the
solution is known for a quadratic cost is the 
Gaussian case in any dimension $d\geq 1$.

\subsubsection{Distance $W_2$ between Gaussian distributions}
If  $\mu_i= \mathcal{N}(m_i,\Sigma_i)$, $i\in\{0,1\}$ are two Gaussian distributions on $\R^d$, the $2$-Wasserstein distance $W_2$ between $\mu_0$ and $\mu_1$ has a closed-form expression, which can be written 
\begin{equation}
  \label{eq:wasserstein_gaussian}
  W_2^2(\mu_0,\mu_1) = \|m_0-m_1\|^2 + \mathrm{tr}\left( \Sigma_0 + \Sigma_1  - 2 \left(\Sigma_0^{\frac 1 2}\Sigma_1\Sigma_0^{\frac 1 2}\right)^{{\frac 1 2}} \right),
\end{equation}
where, for every symmetric semi-definite positive matrix $M$, the
matrix $M^{\frac 1 2}$ is its unique semi-definite positive square root.

If $\Sigma_0$ is non-singular, then the optimal map $T$ between $\mu_0$ and $\mu_1$ turns out to be affine and is  given by 
\begin{equation}
 \forall x\in\R^d , \quad T(x) = m_1+\Sigma_0^{-\frac 1
   2}\left(\Sigma_0^{\frac 1 2}\Sigma_1\Sigma_0^{\frac 1
     2}\right)^{{\frac 1 2}}\Sigma_0^{-\frac 1 2}(x-m_0) = m_1 +
 \Sigma_0^{-1}(\Sigma_0\Sigma_1)^{\frac 1 2} (x-m_0) ,
\label{eqT:eq}
\end{equation}
and the optimal plan $\gamma$ is then a Gaussian distribution on
$\R^d\times\R^d=\R^{2d}$ that is degenerate since it is supported by
the affine line $y=T(x)$.
These results have been known since~\cite{dowson1982frechet}.

Moreover, if $\Sigma_0$ and $\Sigma_1$ are non-degenerate, the
geodesic path $(\mu_t)$, $t\in(0,1)$, 
between $\mu_0$ and $\mu_1$ is given by
$\mu_t = \NN(m_t,\Sigma_t)$ with 
$m_t = (1-t)m_0 +tm_1$ and 
\[\Sigma_t = ((1-t) \Id +tC)\Sigma_0 ((1-t) \Id+tC),\]
with $\Id$ the $d\times d$ identity matrix and $C = \Sigma_1^{\frac 1 2}\left(\Sigma_1^{\frac 1
    2}\Sigma_0\Sigma_1^{\frac 1 2}\right)^{{-\frac 1 2}}\Sigma_1^{\frac
  1 2}$. 

This property still holds if the covariance matrices are not invertible, by replacing the inverse by the Moore-Penrose pseudo-inverse matrix, see Proposition 6.1 in~\cite{xia2014synthesizing}. The optimal map $T$ is not generalized in this case since the optimal plan is usually not supported by the graph of a function.  

\subsubsection{$W_2$-Barycenters in the Gaussian case}
\label{sec:multimarginal_gaussian}
For $J\geq 2$, let $\lambda = (\lambda_0,\dots,\lambda_{J-1})\in
(\R_+)^J$ be a set of positive weights summing to $1$ and let
$\mu_0,\mu_1\dots,\mu_{J-1}$ be $J$  Gaussian probability
distributions on $\R^d$. For $j=$ $0 \dots J-1$, we denote by $m_j$
and $\Sigma_j$ the expectation and the covariance matrix of $\mu_j$.
Theorem 2.2 in~\cite{ruschendorf2002n} tells us that if the covariances
$\Sigma_{j}$ are all positive definite, then the solution of the multi-marginal problem~\eqref{eq:W2_multimarge_definition} for the Gaussian distributions $\mu_0,\mu_1\dots,\mu_{J-1}$ can be written 
\begin{equation}
  \label{eq:gamma_optimal_multimarge}
\gamma^*(x_0,\dots,x_{J-1}) =
g_{m_0,\Sigma_0}(x_0)\, \delta_{(x_1,\dots,x_{J-1})=(S_1S_0^{-1}x_0,\dots,S_{J-1}S_0^{-1}x_0)}
\end{equation}
where
$S_j =  \Sigma_{j}^{1/2}\left(\Sigma_{j}^{1/2}\Sigma_*\Sigma_{j}^{1/2}\right)^{-1/2}\Sigma_{j}^{1/2}$
with $\Sigma_*$ a
solution of the fixed-point problem
\begin{equation}
  \label{eq:fixed}
  \sum_{j=0}^{J-1}  \lambda_j\left(\Sigma_*^{1/2}
    \Sigma_{j}\Sigma_*^{1/2}\right)^{1/2} = \Sigma_*.
\end{equation}
The barycenter $\nu^*$ of all the $\mu_j$ with weights $\lambda_j$ is the distribution $\NN(m_*,\Sigma_*)$, with $m_* = \sum_{j=0}^{J-1} \lambda_j m_j$.   
Equation~\eqref{eq:fixed} provides a natural iterative algorithm
(see \cite{alvarez2016fixed}) to compute the fixed point $\Sigma_*$ from the set of covariances $\Sigma_j$, $j\in \{0,\dots,J-1\}$.

\section{Some properties of Gaussian Mixtures Models}
\label{sec:GMM_prop}

The goal of this paper is to investigate how the optimisation
problem~\eqref{eq:wasserstein_definition} is transformed when the
probability distributions $\mu_0$, $\mu_1$ are finite Gaussian mixture
models and the transport plan $\gamma$ is forced to be a Gaussian
mixture model. This will be the aim of Section~\ref{sec:MW2}. Before,
we first need to recall a few basic properties on these mixture
models, and especially a density property and an identifiability
property. 

In the following, for $N\geq 1$ integer, we define the simplex
$$ \Gamma_{N} = \{\pi\in \R^{N}_+\;;\;\pi\1_N  = \sum_{k=1}^{N} \pi_k =
1\} .$$

\begin{defi}
  Let $K\geq 1$ be an integer. A (finite) Gaussian mixture model of size $K$ on $\R^d$ is a probability distribution $\mu$ on $\R^d$ that can be written
\begin{equation}
  \label{eq:GMM}
  \mu =\sum_{k=1}^{K} \pi_k \mu_k \;\text{ where }\; \mu_k = \mathcal N (m_k, \Sigma_k)  \text{ and } \pi \in \Gamma_{K}.
\end{equation}
 \end{defi}
We write $GMM_d(K)$ the subset of $\mathcal{P}(\R^d)$ made of
probability measures on $\R^d$ which can be written as Gaussian
mixtures with less than $K$ components (such mixtures are obviously
also in $\mathcal{P}_p(\R^d)$ for any $p\geq 1$). For $K < K'$, $GMM_d(K)\subset
GMM_d(K').$ The set of all finite Gaussian mixture distributions is written 
$$GMM_d(\infty) = \cup_{K\geq 0} GMM_d(K) .$$

\subsection{Two properties of GMM}
 
The following lemma states that any  measure in $\mathcal{P}_p(\R^d)$
can be approximated with any precision for the distance $W_p$ by a
finite convex combination of Dirac masses. This classical result will be useful
in the rest of the paper. 

\begin{lemma}
\label{prop:Dirac_dense_Wp}
The set 
\[ \left\{ \sum_{k=1}^N \pi_k \delta_{y_k}\;;\; N\in\N,\;(y_k)_k \in (\R^{d})^N, \; (\pi_k)_k\in \Gamma_N \right\}\]
is dense in $\mathcal{P}_p(\R^d)$ for the metric $W_p$, for any $p\geq 1$. 
\end{lemma}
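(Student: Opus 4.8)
The plan is to prove density by a two-step approximation. The statement asserts that finite convex combinations of Dirac masses are dense in $\mathcal{P}_p(\R^d)$ for $W_p$. Since $W_p$ metrizes weak convergence together with convergence of $p$-th moments, I want to show that for any $\mu \in \mathcal{P}_p(\R^d)$ and any $\eps > 0$, there is an empirical-type measure $\sum_k \pi_k \delta_{y_k}$ within $W_p$-distance $\eps$ of $\mu$.

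**First** I would handle the tail: because $\int \|x\|^p d\mu(x) < +\infty$, the mass of $\mu$ outside a large ball $B(0,R)$ carries arbitrarily small $p$-th moment. So I would fix $R$ large enough that $\int_{\|x\|>R} \|x\|^p d\mu(x)$ is tiny, and reduce to approximating the compactly-supported restriction (suitably renormalized, or by transporting the outside mass to the origin at controlled cost). This truncation step controls the contribution of the tail to the transport cost.

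**Second**, on the ball $B(0,R)$ I would partition into finitely many small cells $(A_k)$ of diameter at most $\delta$, pick a representative point $y_k \in A_k$, and set $\pi_k = \mu(A_k)$. The discrete measure $\nu = \sum_k \pi_k \delta_{y_k}$ admits the obvious coupling with $\mu$ that sends the mass of each cell $A_k$ to the point $y_k$; this is a valid transport plan whose cost is bounded by $\delta^p$ (plus the small tail term), giving $W_p^p(\mu,\nu) \le \delta^p + (\text{tail})$. Choosing $\delta$ and $R$ appropriately makes this smaller than $\eps^p$. The weights $\pi_k$ sum to $\mu(B(0,R))$, which is close to but possibly below $1$; I would either renormalize and absorb the resulting error, or put the leftover mass on a single extra atom, keeping the family within the stated set.

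**The main obstacle** is purely bookkeeping: coordinating the two error sources (truncation of the tail versus discretization inside the ball) so that the coupling I construct is genuinely admissible and its cost is provably below $\eps^p$. There is no deep difficulty here — the result is classical — so the emphasis should be on writing down one explicit coupling and cleanly estimating $\int \|y_0 - y_1\|^p d\gamma$ against it, rather than invoking abstract weak-convergence machinery. An alternative, shorter route would be to quote that compactly supported measures are dense and that any compactly supported measure is a weak limit of its cell-discretizations with matching moments; but the direct coupling argument is self-contained and yields the explicit bound $W_p(\mu,\nu) \le \eps$ that the lemma needs.
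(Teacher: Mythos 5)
Your proposal is correct and follows essentially the same route as the paper's own proof: truncate the tail outside a large ball (sending that mass to the origin at a cost controlled by the $p$-th moment of the tail), cover the ball by finitely many cells of small diameter, push the mass of each cell onto a representative point, and bound the cost of the resulting explicit coupling by $\delta^p$ plus the tail term. The paper packages this as a single push-forward map $\phi$ with $W_p^p(\phi\#\mu,\mu)\le \int \|y-\phi(y)\|^p d\mu(y)\le 2\epsilon^p$, which is exactly the bookkeeping you describe.
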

 For the sake of completeness, we provide a
  proof in Appendix, adapted from the proof of Theorem 6.18 in~\cite{villani2008optimal}.
Since Dirac masses can be seen as degenerate Gaussian distributions, a direct consequence of Lemma~\ref{prop:Dirac_dense_Wp} is the following proposition.  
\begin{prop}
\label{prop:density_gmm}
 $GMM_d(\infty)$ is dense in $\mathcal{P}_p(\R^d)$ for the metric $W_p$.
\end{prop}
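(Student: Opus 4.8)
The plan is to derive Proposition~\ref{prop:density_gmm} directly from Lemma~\ref{prop:Dirac_dense_Wp} by observing that every finite convex combination of Dirac masses can itself be approximated in $W_p$ by a genuine (non-degenerate) Gaussian mixture. Since Lemma~\ref{prop:Dirac_dense_Wp} already tells us that the set of finite combinations of Diracs is dense in $\mathcal{P}_p(\R^d)$ for $W_p$, a triangle-inequality argument will then give density of $GMM_d(\infty)$, provided we can show that $GMM_d(\infty)$ lies within arbitrarily small $W_p$-distance of the Dirac combinations.

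First I would fix $\mu\in\mathcal{P}_p(\R^d)$ and $\eps>0$. By Lemma~\ref{prop:Dirac_dense_Wp} there exist $N$, points $(y_k)_{k=1}^N$ and weights $(\pi_k)_k\in\Gamma_N$ such that the discrete measure $\nu = \sum_{k=1}^N \pi_k \delta_{y_k}$ satisfies $W_p(\mu,\nu)<\eps/2$. Next I would approximate $\nu$ by a Gaussian mixture with the same weights and means, replacing each $\delta_{y_k}$ by the Gaussian $\mathcal{N}(y_k,\sigma^2 \mathrm{I}_d)$ for a small scale $\sigma>0$; that is, I set $\mu_\sigma = \sum_{k=1}^N \pi_k\, \mathcal{N}(y_k,\sigma^2\mathrm{I}_d)\in GMM_d(N)\subset GMM_d(\infty)$.

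The key step is to bound $W_p(\nu,\mu_\sigma)$ in terms of $\sigma$. Here I would use a simple coupling: sample the index $k$ with probability $\pi_k$, then set $Y_0 = y_k$ (distributed as $\nu$) and $Y_1 = y_k + \sigma Z$ with $Z\sim\mathcal{N}(0,\mathrm{I}_d)$ independent of $k$ (so that $Y_1$ is distributed as $\mu_\sigma$). This is an admissible element of $\Pi(\nu,\mu_\sigma)$, and it yields
\[
W_p^p(\nu,\mu_\sigma)\leq \E\|Y_0-Y_1\|^p = \sigma^p\,\E\|Z\|^p,
\]
since the cost $\|Y_0-Y_1\|^p = \sigma^p\|Z\|^p$ no longer depends on $k$. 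As $\E\|Z\|^p$ is a finite constant depending only on $p$ and $d$, choosing $\sigma$ small enough makes $W_p(\nu,\mu_\sigma)<\eps/2$. The triangle inequality then gives $W_p(\mu,\mu_\sigma)\leq W_p(\mu,\nu)+W_p(\nu,\mu_\sigma)<\eps$, which establishes the claim.

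I do not expect a genuine obstacle in this argument, since the heavy lifting is already done by Lemma~\ref{prop:Dirac_dense_Wp}; the only point requiring mild care is the explicit coupling and the verification that $\E\|Z\|^p$ is finite and independent of $\sigma$ up to the factor $\sigma^p$. A slightly more conceptual phrasing, which avoids even writing the coupling, is to note that $\mathcal{N}(y_k,\sigma^2\mathrm{I}_d)$ is the pushforward of $\mathcal{N}(y_k,\mathrm{I}_d)$ under $x\mapsto y_k+\sigma(x-y_k)$ and to invoke continuity of $W_p$ under convergence of the scale parameter; but the direct coupling estimate above is the cleanest route and makes the role of $GMM_d(\infty)$ (via degenerate-to-nondegenerate Gaussians) transparent.
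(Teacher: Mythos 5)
Your proof is correct, but it takes a genuinely different (and longer) route than the paper. The paper's Definition of a GMM allows the covariance matrices $\Sigma_k$ to be singular, so a Dirac mass $\delta_{y_k}$ is itself the degenerate Gaussian $\mathcal{N}(y_k,0)$; consequently the dense set of Lemma~\ref{prop:Dirac_dense_Wp} is already contained in $GMM_d(\infty)$, and the proposition follows in one line with no further estimate. You instead insist on approximating each Dirac by a non-degenerate Gaussian $\mathcal{N}(y_k,\sigma^2\mathrm{I}_d)$ and control $W_p(\nu,\mu_\sigma)$ by the explicit coupling $(y_k,\,y_k+\sigma Z)$, which correctly gives $W_p^p(\nu,\mu_\sigma)\le \sigma^p\,\E\|Z\|^p<\infty$, and then conclude by the triangle inequality. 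This is valid, and it buys something the paper's argument does not: it shows that the subfamily of mixtures with \emph{non-degenerate} components is already dense, so the proposition would survive a more restrictive definition of GMM. The price is the extra coupling step, which the paper's convention renders unnecessary.
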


Another important property will be necessary, related to the
identifiability of  Gaussian mixture models.
It is clear such  models are not \textit{stricto
  sensu} identifiable, since  reordering the indexes of a mixture
changes its parametrization without changing the underlying
probability distribution, or also because a component with mass $1$ can be
divided in two identical components with masses $\frac 1 2$, for
example. However, if we write mixtures in a
``compact'' way (forbidding two components of the same mixture to be
identical), identifiability holds, up to a reordering of the
indexes. This property is 
reminded below.

\begin{prop}
The set of finite Gaussian mixtures is identifiable, in the sense that two mixtures $\mu_0 =\sum_{k=1}^{K_0} \pi_0^k \mu_0^k$ and $\mu_1= \sum_{k=1}^{K_1} \pi_1^k \mu_1^k$, written such that all $\{\mu_0^k\}_k$ (resp. all $\{\mu_1^j\}_j$) are pairwise distinct, are equal if and only if $K_0=K_1$ and we can reorder the indexes such that for all $k$, $\pi_0^k = \pi_1^k$, $m_0^k = m_1^k$ and $\Sigma_0^k = \Sigma_1^k$. 
  \label{lemma:unicity_gaussian}
\end{prop}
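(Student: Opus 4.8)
The plan is to prove the non-trivial implication; the converse (matching components and weights yield equal mixtures) is immediate from the definition~\eqref{eq:GMM}. Assume therefore that $\mu_0=\mu_1$ as probability measures. I would collect all the distinct Gaussians occurring in either mixture into a single family $\nu_1=\mathcal N(m_1,\Sigma_1),\dots,\nu_L=\mathcal N(m_L,\Sigma_L)$, with the pairs $(m_\ell,\Sigma_\ell)$ pairwise distinct, and assign to $\nu_\ell$ the real coefficient $c_\ell$ equal to its weight in $\mu_0$ minus its weight in $\mu_1$ (zero if it is absent). The hypothesis then reads $\sum_{\ell=1}^L c_\ell\nu_\ell=0$. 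Since the weights inside each mixture are positive and its components pairwise distinct, showing that all $c_\ell$ vanish is exactly the desired conclusion: each Gaussian carries the same mass in $\mu_0$ and in $\mu_1$, forcing $K_0=K_1$ and the claimed reordering. Thus everything reduces to the \emph{linear independence of the characteristic functions of pairwise distinct Gaussians}.

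Passing to Fourier transforms (licit since a finite signed measure is determined by its characteristic function), the identity $\sum_\ell c_\ell\nu_\ell=0$ becomes
\begin{equation}
  \sum_{\ell=1}^L c_\ell\, e^{\,i\langle m_\ell,\xi\rangle-\frac12\xi^t\Sigma_\ell\xi}=0,\qquad\forall\,\xi\in\R^d.
  \label{eq:plan_charfun}
\end{equation}
I would then restrict $\xi$ to a ray $\xi=tu$ and study the behaviour as $t\to+\infty$. Writing $q_\ell=u^t\Sigma_\ell u\geq0$ and $\ell_\ell=\langle m_\ell,u\rangle$, the key point is to choose a \emph{generic} direction $u$ for which the map $\ell\mapsto(q_\ell,\ell_\ell)$ is injective: for two components with distinct covariances, $u^t(\Sigma_\ell-\Sigma_{\ell'})u\neq0$ outside a measure-zero quadric, while for two components sharing a covariance (hence with distinct means) $\langle m_\ell-m_{\ell'},u\rangle\neq0$ outside a hyperplane. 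Avoiding these finitely many negligible sets yields such a $u$.

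With this $u$ fixed, \eqref{eq:plan_charfun} reads $\sum_\ell c_\ell e^{it\ell_\ell-\frac{t^2}2 q_\ell}=0$ for every $t$. Set $q_{\min}=\min_\ell q_\ell$; multiplying by $e^{\frac{t^2}2 q_{\min}}$ and letting $t\to+\infty$ makes every term with $q_\ell>q_{\min}$ vanish, so that the trigonometric polynomial $g(t)=\sum_{\ell:\,q_\ell=q_{\min}}c_\ell e^{it\ell_\ell}$ tends to $0$. Because the frequencies $\ell_\ell$ appearing here are pairwise distinct (by injectivity of $\ell\mapsto(q_\ell,\ell_\ell)$), the time average $\frac1T\int_0^T|g(t)|^2\,dt$ converges to $\sum_{\ell:\,q_\ell=q_{\min}}|c_\ell|^2$; since $g(t)\to0$ this sum is zero, so all these $c_\ell$ vanish. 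Removing those terms and repeating with the next smallest value of $q_\ell$, an induction on the finitely many distinct values of $q_\ell$ gives $c_\ell=0$ for every $\ell$, completing the proof.

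I expect the main obstacle to be the combination of the separation step with the almost-periodicity argument: one must simultaneously disentangle the covariances (through the quadratic decay rates $q_\ell$) and the means (through the oscillation frequencies $\ell_\ell$), and then justify rigorously that a non-trivial trigonometric polynomial with distinct frequencies cannot decay to $0$ at infinity. The degenerate case where some $\Sigma_\ell$ is singular needs no special treatment, since the characteristic-function formulation~\eqref{eq:plan_charfun} and the bound $q_\ell\geq0$ remain valid throughout.
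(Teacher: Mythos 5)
Your proof is correct, but it takes a genuinely different route from the paper's. The paper argues in physical space: for $d=1$ it compares the asymptotics of the mixture densities as $x\to+\infty$ (the component with the largest variance, tie-broken by the largest mean, dominates), peels components off one at a time, and then treats $d>1$ by projecting onto generic directions $\xi$ and reconciling the a priori $\xi$-dependent identifications through a second measure-zero argument on the sets $\Xi_{k,j}$. You instead reformulate the claim upfront as the linear independence of pairwise distinct Gaussian measures with signed coefficients $c_\ell$, pass to characteristic functions, and work along a single generic ray: covariances are separated by the quadratic decay rates $q_\ell$ and means by the Ces\`aro average of the surviving trigonometric polynomial, peeling off a whole group $\{\ell:\,q_\ell=q_{\min}\}$ at each stage rather than one component. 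This buys two simplifications: degenerate components (Dirac masses included) need no special treatment, since the characteristic-function formula is uniform in $\Sigma_\ell$, whereas the paper must strip the Dirac masses off first; and because the $c_\ell$ are defined independently of the direction, one generic $u$ suffices and the paper's second measure-theoretic step disappears entirely. The price is the extra almost-periodicity lemma --- that a nontrivial trigonometric polynomial with pairwise distinct real frequencies cannot tend to $0$ at infinity --- which your time-average computation $\frac1T\int_0^T|g|^2\to\sum_\ell c_\ell^2$ establishes correctly. All the individual steps (genericity of $u$ outside finitely many quadrics and hyperplanes, the domination argument after multiplying by $e^{t^2q_{\min}/2}$, the induction over the distinct values of $q_\ell$) check out.
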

This result is also classical and the proof is provided in Appendix.

\subsection{Optimal transport and Wasserstein barycenters between Gaussian Mixture Models}

We are now in a position to investigate optimal transport between
Gaussian mixture models (GMM). A first important remark is that given two Gaussian mixtures $\mu_0$ and $\mu_1$ on $\R^d$, optimal transport plans $\gamma$ between  $\mu_0$ and $\mu_1$ are usually not GMM.

\begin{prop}
Let $\mu_0 \in GMM_d(K_0)$ and $\mu_1 \in GMM_d(K_1)$ be two Gaussian mixtures such that $\mu_1$ cannot be written $T\#\mu_0$ with $T$ affine. Assume also that $\mu_0$ is absolutely continuous with respect to the Lebesgue measure.  
Let $\gamma \in \Pi(\mu_0,\mu_1)$ be an optimal transport plan between $\mu_0$ and $\mu_1$. Then $\gamma$ does not belongs to $GMM_{2d} (\infty)$.  
\end{prop}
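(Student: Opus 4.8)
The plan is to argue by contradiction: assuming $\gamma \in GMM_{2d}(\infty)$, I will show that the optimal map $T$ is forced to be affine, which contradicts the hypothesis on $\mu_1$. First, I would use that since $\mu_0$ is absolutely continuous and $p=2$, the optimal plan is unique and, by~\eqref{eq:transport_map}, has the form $\gamma = (\mathrm{Id},T)\#\mu_0$ with $T\#\mu_0 = \mu_1$. In particular $\gamma$ is concentrated on the graph $G = \{(x,T(x)) : x\in\R^d\}\subset\R^{2d}$. Since $T$ is Borel measurable, $G$ is a Borel set, and by Fubini it is Lebesgue-negligible in $\R^{2d}$ (each fiber $\{x\}\times\R^d$ meets $G$ in a single point).

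Next, I would write the assumed mixture as $\gamma = \sum_\ell \rho_\ell \gamma_\ell$ with $\gamma_\ell = \NN(\tilde m_\ell,\tilde\Sigma_\ell)$, $\rho_\ell>0$ and $\sum_\ell\rho_\ell=1$. From $\gamma(G)=1$ and $1=\sum_\ell\rho_\ell\gamma_\ell(G)\le\sum_\ell\rho_\ell=1$, every component satisfies $\gamma_\ell(G)=1$. As $G$ is Lebesgue-null, no component can be non-degenerate (a non-degenerate Gaussian gives zero mass to every Lebesgue-null set), so each $\tilde\Sigma_\ell$ is singular. Splitting $\tilde m_\ell$ and $\tilde\Sigma_\ell$ into $d$-dimensional blocks, the top-left block $\Sigma^{00}_\ell$ is the covariance of the first marginal $\rmP_0\#\gamma_\ell$. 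Since $\rmP_0\#\gamma=\mu_0$ is absolutely continuous, and a finite mixture is absolutely continuous only if each of its components is (otherwise a degenerate component would charge a Lebesgue-null affine subspace), every $\rmP_0\#\gamma_\ell$ is absolutely continuous, hence $\Sigma^{00}_\ell$ is non-singular.

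Then, fixing one such component, I would invoke the Gaussian conditioning formula: with $\Sigma^{00}_\ell$ invertible, the conditional law of the second variable given the first is Gaussian, with mean affine in $x$ and covariance equal to the Schur complement $\Sigma^{11}_\ell-\Sigma^{10}_\ell(\Sigma^{00}_\ell)^{-1}\Sigma^{01}_\ell$. The constraint $\gamma_\ell(G)=1$ forces this conditional law to be a Dirac mass for $\rmP_0\#\gamma_\ell$-almost every $x$, so the Schur complement vanishes and the conditional mean must equal $T(x)$. Thus $T$ coincides $\rmP_0\#\gamma_\ell$-a.e. with the affine map $x\mapsto \tilde m^1_\ell+\Sigma^{10}_\ell(\Sigma^{00}_\ell)^{-1}(x-\tilde m^0_\ell)$; since a non-degenerate Gaussian has the same null sets as Lebesgue measure, this identity holds Lebesgue-a.e., hence $\mu_0$-a.e. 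Therefore $\mu_1=T\#\mu_0$ with $T$ affine, contradicting the hypothesis, and so $\gamma\notin GMM_{2d}(\infty)$.

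I expect the main obstacle to be this last step: rigorously passing from ``a degenerate Gaussian concentrated on the graph of $T$, with non-singular first marginal'' to ``$T$ is affine'', which requires the conditioning formula together with careful bookkeeping of the almost-everywhere identification of $T$ with an explicit affine map. A purely geometric alternative would be to observe that each component is supported on an affine subspace $A_\ell$ whose relative-full-measure subset lies in $G$ and which projects bijectively onto $\R^d$ (injectivity following from a Fubini argument ruling out vertical directions in $A_\ell$), so that $A_\ell$ is the graph of an affine map with which $T$ must agree; I would keep whichever version proves cleanest to write out.
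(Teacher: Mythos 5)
Your proof is correct and follows essentially the same route as the paper: use absolute continuity of $\mu_0$ to write $\gamma=(\mathrm{Id},T)\#\mu_0$, observe that every Gaussian component of $\gamma$ must then be degenerate and concentrated on $\mathrm{graph}(T)$, and conclude that $T$ is affine, contradicting the hypothesis. The paper states the last implication without detail, whereas you justify it carefully via the Gaussian conditioning formula (or the geometric alternative), so your write-up is a rigorous elaboration of the same argument rather than a different one.
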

\begin{proof}
Since  $\mu_0$ is absolutely continuous with respect to the Lebesgue
measure, we know that the optimal transport plan is unique and is of
the form $\gamma = (\mathrm{Id},T)\#\mu_0$ for a measurable map
$T:\R^d \rightarrow \R^d$ that satisfies $T\#\mu_0=\mu_1$. Thus, if $\gamma$ belongs to
$GMM_{2d}(\infty)$,  all of its components must be degenerate Gaussian
distributions $\NN(m_k,\Sigma_k)$ such that \[\cup_{k} \left(m_k+\mathrm{Span}(\Sigma_k)\right) = \mathrm{graph}(T).\] It follows that $T$ must be affine on $\R^d$, which contradicts the hypotheses of the proposition. 
\end{proof}

When $\mu_0$ is not absolutely continuous with respect to the Lebesgue measure (which means that one of its components is degenerate), we cannot write $\gamma$ under the form~\eqref{eq:transport_map}, but we conjecture that the previous result usually still holds. A notable exception is the case where all Gaussian components of $\mu_0$ and $\mu_1$ are Dirac masses on $\R^d$, in which case $\gamma$ is also a GMM composed of Dirac masses on $\R^{2d}$.  
\smallskip

We conjecture that since optimal plans $\gamma$ between two GMM are usually not GMM, the
barycenters $(\mathrm{P}_t)\#\gamma$ between $\mu_0$ and $\mu_1$ are
also usually not GMM either (with the exception of $t=0,1$). Take the
one dimensional example of
$\mu_0 = \mathcal{N}(0,1)$ and $\mu_1 = \frac 1 2 (\delta_{-1}+\delta_1)$.
Clearly, an optimal transport map between $\mu_0$ and $\mu_1$ is
defined as
$T(x) = \mathrm{sign}(x)$.
For $t\in(0,1)$, if we denote by $\mu_t$ the barycenter between $\mu_0$ with
weight $1-t$ and $\mu_1$ with weight $t$, then it is easy to show that
$\mu_t$ has a density
$$f_t(x) = \frac{1}{1-t} \left(g\left(
    \frac{x+t}{1-t}\right)\mathbf{1}_{x<-t}+ g\left(
    \frac{x-t}{1-t}\right)\mathbf{1}_{x>t} \right),$$
where $g$ is the density of $\NN(0,1)$. The density $f_t$ is equal to $0$ on the
interval $(-t,t)$ and therefore cannot be the density of a GMM.  

\section{$MW_2$: a distance between Gaussian Mixture Models}
\label{sec:MW2}

In this section, we define a Wasserstein-type distance between
Gaussian mixtures ensuring that barycenters between Gaussian mixtures
remain Gaussian mixtures. To this aim, we restrict the set of
admissible transport plans to Gaussian mixtures and show that the
problem is well defined. Thanks to the identifiability results proved in the previous section, we will show that the corresponding optimization problem boils down to a very simple discrete formulation. 

\subsection{Definition of $MW_2$}
\begin{defi}
\label{def:MW2}
Let $\mu_0$ and $\mu_1$ be two Gaussian mixtures. We define 
  \begin{equation}
    \label{eq:MW2}
 MW_2^2(\mu_0,\mu_1) := \inf_{\gamma \in \Pi(\mu_0,\mu_1) \cap GMM_{2d}(\infty)} \int_{\R^d\times\R^d} \|y_0-y_1\|^2d\gamma(y_0,y_1).
  \end{equation}
\end{defi}
First, observe that the problem is well defined since
$\Pi(\mu_0,\mu_1) \cap GMM_{2d}(\infty)$ contains at least the product
measure $\mu_0 \otimes \mu_1$. Notice also that from the definition we
directly have that
$$MW_2(\mu_0,\mu_1) \geq W_2(\mu_0,\mu_1).$$

\subsection{An equivalent discrete formulation}

Now, we can show that this optimisation problem has a very simple discrete formulation. For $\pi_0  \in \Gamma_{K_0}$ and $\pi_1 \in \Gamma_{K_1}$, we denote by $\Pi(\pi_0,\pi_1)$ the subset of the simplex $\Gamma_{K_0\times K_1}$ with marginals $\pi_0$ and $\pi_1$, {\em i.e.}
\begin{align}
  \label{eq:Pi}
  \Pi(\pi_0,\pi_1) &= \{w \in \MM_{K_0,K_1}(\R^+);\;\;w\1_{K_1} = \pi_0;\;\;w^t\mathbf{1}_{K_0} = \pi_1\} \\
&= \{w \in \MM_{K_0,K_1}(\R^+);\;\forall k,\sum_j w_{kj} = \pi_0^k \text{ and }\forall j,\;\sum_k w_{kj} = \pi_1^j\;\}.
\end{align}

\begin{prop}
\label{prop:MW2}
 Let $\mu_0 =\sum_{k=1}^{K_0} \pi_0^k \mu_0^k$ and $\mu_1= \sum_{k=1}^{K_1} \pi_1^k \mu_1^k$ be two Gaussian mixtures, then 
  \begin{equation}
    \label{eq:DW_conj}
 MW_2^2(\mu_0,\mu_1) =  \min_{w \in \Pi(\pi_0,\pi_1)} \sum_{k,l}w_{kl} W_2^2(\mu_0^k,\mu_1^l). 
  \end{equation}
Moreover, if $w^*$ is a minimizer of~\eqref{eq:DW_conj}, and if $T_{k,l}$ is the $W_2$-optimal map between $\mu_0^k$ and $\mu_1^l$, then $\gamma^*$ defined as
$$  \gamma^* (x,y) = \sum_{k,l} w_{k,l}^\ast \,
g_{m_0^k,\Sigma_0^k}(x)\, \delta_{y=T_{k,l}(x)}$$
is a minimizer of~\eqref{eq:MW2}.
\end{prop}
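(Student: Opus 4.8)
The plan is to prove the identity~\eqref{eq:DW_conj} through two matching inequalities and to recover the explicit minimizer as a byproduct of the easy direction. As a preliminary remark, I would observe that the right-hand side is a linear program: the map $w\mapsto \sum_{k,l} w_{kl} W_2^2(\mu_0^k,\mu_1^l)$ is linear and $\Pi(\pi_0,\pi_1)$ is a nonempty (it contains the product weights $\pi_0(\pi_1)^t$) compact convex polytope, so the minimum is genuinely attained and writing $\min$ is legitimate.

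For the inequality $MW_2^2(\mu_0,\mu_1)\le \min_{w}\sum_{k,l} w_{kl} W_2^2(\mu_0^k,\mu_1^l)$, I would fix any $w\in\Pi(\pi_0,\pi_1)$ and, for each pair $(k,l)$, let $\gamma_{k,l}$ be the $W_2$-optimal plan between the Gaussians $\mu_0^k$ and $\mu_1^l$. By the Gaussian case recalled around~\eqref{eqT:eq}, $\gamma_{k,l}=(\mathrm{Id},T_{k,l})\#\mu_0^k$ is itself a (possibly degenerate) Gaussian measure on $\R^{2d}$. Setting $\gamma=\sum_{k,l} w_{kl}\,\gamma_{k,l}$, this is a finite mixture of Gaussians, so $\gamma\in GMM_{2d}(\infty)$, and its marginals are $\rmP_0\#\gamma=\sum_k(\sum_l w_{kl})\mu_0^k=\sum_k \pi_0^k\mu_0^k=\mu_0$ and, symmetrically, $\rmP_1\#\gamma=\mu_1$, using the marginal constraints on $w$. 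Hence $\gamma\in\Pi(\mu_0,\mu_1)\cap GMM_{2d}(\infty)$ is admissible, and since $\int\|y_0-y_1\|^2\,d\gamma_{k,l}=W_2^2(\mu_0^k,\mu_1^l)$, linearity gives $MW_2^2(\mu_0,\mu_1)\le \int\|y_0-y_1\|^2\,d\gamma=\sum_{k,l} w_{kl} W_2^2(\mu_0^k,\mu_1^l)$. Choosing $w=w^\ast$ a minimizer shows at once that $MW_2^2\le \min_w(\cdots)$ and that the associated $\gamma^\ast$ reaches this upper bound, which yields the minimizer claim once the reverse inequality is established.

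For the reverse inequality I would take an arbitrary admissible $\gamma\in\Pi(\mu_0,\mu_1)\cap GMM_{2d}(\infty)$ and write it as $\gamma=\sum_n \alpha_n g_n$ with the $g_n$ pairwise distinct Gaussian measures on $\R^{2d}$ and $\alpha\in\Gamma_N$; I may also assume the two given mixtures are in compact form (pairwise distinct components), the general case being identical since splitting a component of $\mu_0$ or $\mu_1$ leaves the value of the linear program~\eqref{eq:DW_conj} unchanged. The crucial step is to control marginals componentwise: each $\rmP_0\#g_n$ is a Gaussian on $\R^d$, and $\sum_n\alpha_n(\rmP_0\#g_n)=\rmP_0\#\gamma=\mu_0=\sum_k\pi_0^k\mu_0^k$. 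Collecting identical terms on the left and invoking the identifiability property (Proposition~\ref{lemma:unicity_gaussian}), I conclude that the distinct values of $\rmP_0\#g_n$ are exactly the components $\mu_0^k$; thus every $g_n$ has first marginal equal to some $\mu_0^{k(n)}$ with $\sum_{n:\,k(n)=k}\alpha_n=\pi_0^k$, and symmetrically a second marginal $\mu_1^{l(n)}$ with $\sum_{n:\,l(n)=l}\alpha_n=\pi_1^l$. Defining $w_{kl}:=\sum_{n:\,k(n)=k,\,l(n)=l}\alpha_n$, these identities say precisely that $w\in\Pi(\pi_0,\pi_1)$. Finally, each $g_n$ is a coupling of $\mu_0^{k(n)}$ and $\mu_1^{l(n)}$, so $\int\|y_0-y_1\|^2\,dg_n\ge W_2^2(\mu_0^{k(n)},\mu_1^{l(n)})$ by definition of $W_2$, and summing against $\alpha_n$ gives $\int\|y_0-y_1\|^2\,d\gamma\ge \sum_{k,l} w_{kl}W_2^2(\mu_0^k,\mu_1^l)\ge \min_w(\cdots)$; taking the infimum over $\gamma$ closes the argument.

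The main obstacle is exactly the componentwise marginal identification in the reverse inequality: a priori the marginals $\rmP_0\#g_n$ of the individual Gaussian pieces of $\gamma$ could be arbitrary Gaussians whose mixture merely reassembles $\mu_0$, with nothing forcing them to coincide with the prescribed components $\mu_0^k$. It is here that identifiability of finite Gaussian mixtures is indispensable, as it forbids any such splitting and pins down each component's marginals, enabling the passage from the measure $\gamma$ to the finite coupling matrix $w$. A minor point to treat carefully is the degenerate (e.g. Dirac) case, where the optimal Gaussian coupling in the easy direction must be built via the Moore--Penrose pseudo-inverse as recalled after~\eqref{eqT:eq}; this does not affect the structure of the proof.
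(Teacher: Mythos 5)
Your proof is correct and follows essentially the same route as the paper's: the upper bound is obtained by mixing the pairwise $W_2$-optimal Gaussian plans according to a discrete coupling $w$, and the lower bound by decomposing an arbitrary admissible GMM plan into Gaussian components and invoking the identifiability of finite Gaussian mixtures to pin down their marginals. Your additional remarks (attainment of the linear program over the compact polytope $\Pi(\pi_0,\pi_1)$, reduction to pairwise-distinct components before applying identifiability, and the degenerate-covariance caveat) are welcome points of rigor that the paper leaves implicit, but they do not change the argument.
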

\begin{proof}
First, let $w^*$ be a solution of the discrete linear program 
\begin{equation}
 \inf_{w \in \Pi(\pi_0,\pi_1)} \sum_{k,l}w_{kl} W_2^2(\mu_0^k,\mu_1^l). 
\end{equation}
For each pair $(k,l)$, let 
\[\gamma_{kl} = \mathrm{argmin}_{\gamma \in \Pi(\mu_0^k,\mu_1^l)} \int_{\R^d\times\R^d} \|y_0-y_1\|^2d\gamma(y_0,y_1)\]
 and  
\[\gamma^* = \sum_{k,l} w^*_{kl}\gamma_{kl}.\]   
Clearly, $\gamma^* \in \Pi(\mu_0,\mu_1) \cap GMM_{2d}(K_0K_1)$.
It follows that 
\begin{eqnarray*}
\sum_{k,l} w^*_{kl}W_2^2(\mu_0^k,\mu_1^l) &=& \int_{\R^d\times\R^d} \|y_0-y_1\|^2d\gamma^*(y_0,y_1)\\ 
&\geq& \min_{\gamma \in \Pi(\mu_0,\mu_1) \cap GMM_{2d}(K_0K_1)} \int_{\R^d\times\R^d} \|y_0-y_1\|^2d\gamma(y_0,y_1)\\
&\geq& \min_{\gamma \in \Pi(\mu_0,\mu_1) \cap GMM_{2d}(\infty)} \int_{\R^d\times\R^d} \|y_0-y_1\|^2d\gamma(y_0,y_1),
\end{eqnarray*}
because $GMM_{2d}(K_0K_1) \subset GMM_{2d}(\infty)$.

Now, let $\gamma$ be any element of $\Pi(\mu_0,\mu_1) \cap GMM_{2d}(\infty)$. Since $\gamma$ belongs to $GMM_{2d}(\infty)$, there exists an integer $K$ such that $\gamma = \sum_{j=1}^{K}w_j \gamma_j$. Since $\mathrm{P}_0{\#} \gamma = \mu_0$, it follows that  
\[\sum_{j=1}^{K}w_j \mathrm{P}_0{\#} \gamma_j = \sum_{k=1}^{K_0} \pi_0^k\mu_0^k.\]
Thanks to the identifiability property shown in the previous section,
we know that these two Gaussian mixtures must have the same
components, so for each $j$ in $\{1,\dots K\}$, there is $1\leq k\leq
K_0$ such that  $\mathrm{P}_0{\#} \gamma_j  = \mu_0^k$. In the same
way, there is  $1\leq l\leq K_1$ such that  $\mathrm{P}_1{\#} \gamma_j
= \mu_1^l$. It follows that $\gamma_j$ belongs to
$\Pi(\mu_0^k,\mu_1^l)$. We conclude that the mixture $\gamma$ can be
written  as a mixture of Gaussian components $\gamma_{kl} \in \Pi(\mu_0^k,\mu_1^l)$, {\em i.e} $\gamma = \sum_{k=1}^{K_0}\sum_{l=1}^{K_1}w_{kl} \gamma_{kl}$. Since $\mathrm{P}_0{\#} \gamma  = \mu_0$ and $\mathrm{P}_1{\#} \gamma  = \mu_1$, we know that $w \in \Pi(\pi_0,\pi_1)$. As a consequence,
\begin{eqnarray*}
  \int_{\R^d\times\R^d} \|y_0-y_1\|^2d\gamma(y_0,y_1) &\geq& \sum_{k=1}^{K_0}\sum_{l=1}^{K_1}w_{kl} W_2^2(\mu_0^k,\mu_1^l)\geq \sum_{k=1}^{K_0}\sum_{l=1}^{K_1}w^*_{kl} W_2^2(\mu_0^k,\mu_1^l). 
\end{eqnarray*}
This inequality holds for any $\gamma$ in $\Pi(\mu_0,\mu_1) \cap GMM_{2d}(\infty)$, which concludes the proof.
\end{proof}

It happens that the discrete form~\eqref{eq:DW_conj}, which can be
seen as an aggregation of simple Wasserstein distances between Gaussians,  has been recently proposed as an
ingenious  alternative to $W_2$ in the machine learning literature, both in
\cite{chen2016distance,chen2019aggregated} and
\cite{chen2017optimal,chen2019optimal}.
Observe that the point of view followed here in our paper is quite
different from these works, since $MW_2$ is defined in a completely continuous setting as an
optimal transport between GMMs with a restriction on couplings,
following the same kind of approach as in~\cite{bionnadal2019}. The
fact that this restriction leads to an explicit discrete formula, the
same as the one proposed independently in
\cite{chen2016distance,chen2019aggregated} and
\cite{chen2017optimal,chen2019optimal}, is quite striking.  Observe
also that thanks to the ``identifiability property'' of GMMs, this continuous formulation~\eqref{eq:MW2}  is obviously non ambiguous,
in the sense that the value of the minimium is the same whatever the
parametrization of the Gaussian mixtures $\mu_0$ and $\mu_1$. This was
not obvious from the discrete versions. We will see in
the following sections how this continuous formulation can be extended
to multi-marginal and barycenter formulations, and how it can be
generalized or used in the case of more general distributions.

Notice that we do not use in the definition and in the proof the
fact that the ground cost is quadratic. Definition~\ref{def:MW2} can
thus be generalized to other cost functions $c:\R^{2d}\mapsto
\R$. The reason why we focus on the quadratic cost is that optimal
transport plans between Gaussian measures for $W_2$ can be computed
explicitely.   
It follows from the equivalence between the continuous and discrete forms of $MW_2$ that the solution of~\eqref{eq:MW2} is very easy to compute in practice.
Another consequence of this equivalence is that there exists at least one optimal plan $\gamma^*$ for~\eqref{eq:MW2} containing less than $K_0+K_1-1$ Gaussian components. 

\begin{Corollary}
 Let $\mu_0 =\sum_{k=1}^{K_0} \pi_0^k \mu_0^k$ and $\mu_1= \sum_{k=1}^{K_1} \pi_1^k \mu_1^k$ be two Gaussian mixtures on $\R^d$, then the infimum in \eqref{eq:MW2} is attained for a given $\gamma^* \in \Pi(\mu_0,\mu_1) \cap GMM_{2d}(K_0+K_1-1)$. 
\end{Corollary}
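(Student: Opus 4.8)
The plan is to read the result straight off Proposition~\ref{prop:MW2}. The optimal plan $\gamma^*$ produced there has exactly one Gaussian component $w^*_{k,l}\,g_{m_0^k,\Sigma_0^k}(x)\,\delta_{y=T_{k,l}(x)}$ for each pair $(k,l)$ with $w^*_{kl}\neq 0$, so the number of components of $\gamma^*$ equals the number of nonzero entries of the minimizer $w^*$ of the discrete program~\eqref{eq:DW_conj}. It therefore suffices to exhibit one minimizer $w^*$ of~\eqref{eq:DW_conj} having at most $K_0+K_1-1$ nonzero entries, and then feed it into the construction of Proposition~\ref{prop:MW2}.

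First I would observe that the feasible set $\Pi(\pi_0,\pi_1)$ defined in~\eqref{eq:Pi} is a nonempty, bounded, closed polytope in $\MM_{K_0,K_1}(\R)$ (the classical transportation polytope), and that the objective $w\mapsto \sum_{k,l} w_{kl} W_2^2(\mu_0^k,\mu_1^l)$ is linear. By the fundamental theorem of linear programming, the minimum, which is attained since the domain is compact, is attained at an extreme point (vertex) of $\Pi(\pi_0,\pi_1)$; I may thus take $w^*$ to be such a vertex.

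The core step is then to bound the number of nonzero entries of a vertex. To any feasible $w$ I would associate its support graph, the bipartite graph on vertex set $\{1,\dots,K_0\}\sqcup\{1,\dots,K_1\}$ with an edge $(k,l)$ whenever $w_{kl}>0$. The key claim is that the support graph of a vertex is acyclic. Indeed, if it contained a cycle, one could add to $w$ a matrix $\theta$ supported on that cycle with alternating $+1$ and $-1$ entries; such a perturbation leaves every row sum and every column sum unchanged, so both $w\pm\eps\theta$ remain feasible for small $\eps>0$, contradicting the assumption that $w$ is an extreme point. Since a forest on $K_0+K_1$ vertices has at most $K_0+K_1-1$ edges, a vertex has at most $K_0+K_1-1$ nonzero entries. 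Equivalently, this follows from the fact that the $K_0+K_1$ marginal equality constraints have a single linear dependency (the total masses of $\pi_0$ and $\pi_1$ both equal $1$), so the constraint matrix has rank $K_0+K_1-1$ and a basic feasible solution has at most that many basic variables.

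Plugging this vertex $w^*$ into Proposition~\ref{prop:MW2} yields an optimal plan $\gamma^*\in\Pi(\mu_0,\mu_1)\cap GMM_{2d}(K_0+K_1-1)$, which is exactly the claim. The only genuine content is the acyclicity/rank argument bounding the support of a vertex; everything else is a direct application of the preceding proposition together with standard facts about linear programs over compact polytopes, so I expect no real obstacle beyond stating that argument cleanly.
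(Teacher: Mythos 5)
Your proposal is correct and follows the same route as the paper: the paper simply cites the standard fact (from Peyr\'e--Cuturi) that the discrete transportation program admits an optimal $w^*$ with at most $K_0+K_1-1$ nonzero entries and feeds it into the construction of Proposition~\ref{prop:MW2}, whereas you additionally spell out the standard vertex/acyclic-support-graph argument proving that fact. No gap.
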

\begin{proof}
This follows directly from the proof that there exists at least one optimal $w^*$ for~\eqref{eq:MW2} containing less than $K_0+K_1-1$ Gaussian components (see~\cite{peyre2019computational}). 
\end{proof}

\subsection{An example in one dimension}
\label{sec:example1}
In order to illustrate the behavior of the optimal maps for $MW_2$, we
focus here on a very simple example in one dimension, where $\mu_0$
and $\mu_1$ are the following mixtures of two Gaussian components
\[\mu_0 = 0.3 \NN(0.2,0.03^2)+ 0.7\NN(0.4,0.04^2),\]
\[\mu_1 = 0.6 \NN(0.6,0.06^2)+ 0.4\NN(0.8,0.07^2).\]
Figure~\ref{1D_example_plans} shows the optimal transport plans between $\mu_0$ (in blue) and $\mu_1$ (in red), both for the Wasserstein distance $W_2$ and for $MW_2$. As we can observe, the optimal transport plan for $MW_2$ (a probability measure on $\R\times \R$) is a mixture of three degenerate Gaussians measures supported by 1D lines.    

\begin{figure}[h]
  \centering
  \includegraphics[width=6cm]{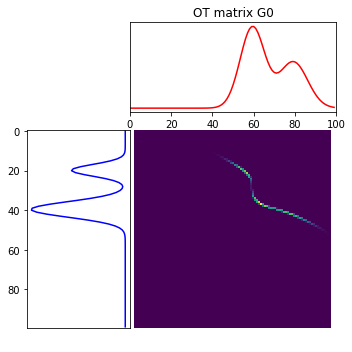}
  \includegraphics[width=6cm]{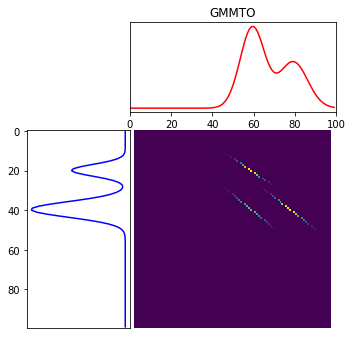}
  \caption{Transport plans between two mixtures of Gaussians $\mu_0$
    (in blue) and $\mu_1$ (in red). Left, optimal transport plan for
    $W_2$. Right, optimal transport plan for $MW_2$. (The values on
    the x-axes have been mutiplied by $100$). These examples
    have been computed using the Python Optimal Transport (POT) library \cite{flamary2017pot}.
\label{1D_example_plans}}
\end{figure}

\subsection{Metric properties of $MW_2$ and displacement interpolation}

\subsubsection{Metric properties of $MW_2$}

\begin{prop}
\label{prop:metric}
  $MW_2$ defines a metric on $GMM_d(\infty)$ and the space $GMM_{d}(\infty)$ equipped with the distance $MW_2$ is a
geodesic space. 
\end{prop}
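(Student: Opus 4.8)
The plan is to verify the metric axioms one at a time, deferring the triangle inequality, and then to exhibit explicit constant-speed geodesics. Throughout I work with the discrete characterization of Proposition~\ref{prop:MW2}, writing each mixture $\mu_i = \sum_k \pi_i^k \mu_i^k$ in compact form, so that its components are pairwise distinct and all weights are positive. Non-negativity is immediate, since $MW_2^2$ is an infimum of integrals of a non-negative integrand. Symmetry follows from the discrete formula: the transposition $w \mapsto w^t$ is a bijection from $\Pi(\pi_0,\pi_1)$ onto $\Pi(\pi_1,\pi_0)$, and $W_2^2(\mu_0^k,\mu_1^l) = W_2^2(\mu_1^l,\mu_0^k)$, so the two minima coincide. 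For the identity of indiscernibles, the diagonal coupling gives $MW_2(\mu,\mu)=0$, because matching each component to itself costs $\sum_k \pi^k W_2^2(\mu^k,\mu^k)=0$; conversely, since $MW_2 \ge W_2$ by definition and $W_2$ is a genuine metric, $MW_2(\mu_0,\mu_1)=0$ forces $W_2(\mu_0,\mu_1)=0$ and hence $\mu_0 = \mu_1$.

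The triangle inequality is the crux, and I expect it to be the main obstacle. Given $\mu_0,\mu_1,\mu_2$ and optimal discrete couplings $w^{01}\in\Pi(\pi_0,\pi_1)$ and $w^{12}\in\Pi(\pi_1,\pi_2)$ for the two subproblems, I would glue them along the shared marginal $\pi_1$ by setting, for each index triple $(k,l,m)$,
\[ W_{klm} = \frac{w^{01}_{kl}\, w^{12}_{lm}}{\pi_1^l}, \]
which is well defined since $\pi_1^l>0$ in compact form. A direct computation gives $\sum_m W_{klm} = w^{01}_{kl}$ and $\sum_k W_{klm} = w^{12}_{lm}$, so the marginal $w^{02}_{km} := \sum_l W_{klm}$ lies in $\Pi(\pi_0,\pi_2)$ and is admissible in the discrete problem for $MW_2(\mu_0,\mu_2)$. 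Taking $w^{02}$ as a candidate, inserting the ordinary triangle inequality $W_2(\mu_0^k,\mu_2^m)\le W_2(\mu_0^k,\mu_1^l)+W_2(\mu_1^l,\mu_2^m)$ inside the sum, and then applying Minkowski's inequality for the $\ell^2$-norm weighted by $W_{klm}$, I obtain
\[ MW_2(\mu_0,\mu_2) \le \Big(\sum_{k,l,m} W_{klm} W_2^2(\mu_0^k,\mu_1^l)\Big)^{1/2} + \Big(\sum_{k,l,m} W_{klm} W_2^2(\mu_1^l,\mu_2^m)\Big)^{1/2}. \]
The marginal identities collapse the first sum to $\sum_{k,l} w^{01}_{kl} W_2^2(\mu_0^k,\mu_1^l)=MW_2^2(\mu_0,\mu_1)$ and the second to $MW_2^2(\mu_1,\mu_2)$, giving the inequality. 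The delicate point is only that the glued object stay in the GMM class, and this is automatic here because the gluing is carried out at the level of the finite index coupling, $w^{02}$ indexing a finite mixture of Gaussian plans.

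For the geodesic property, let $w^*$ be optimal for $MW_2(\mu_0,\mu_1)$ and let $\mu_{kl,t}$ be the Gaussian $W_2$-geodesic (displacement interpolation) between $\mu_0^k$ and $\mu_1^l$, which is itself Gaussian at every $t$. I set $\mu_t = \sum_{k,l} w^*_{kl}\,\mu_{kl,t}$; equivalently $\mu_t = \rmP_t\#\gamma^*$ for the optimal plan $\gamma^*$ of Proposition~\ref{prop:MW2}, so that $\mu_t \in GMM_d(\infty)$ (with at most $K_0K_1$ components) and the endpoints $\mu_0,\mu_1$ are recovered. For $0\le s\le t\le 1$, mixing with weights $w^*_{kl}$ the $W_2$-optimal Gaussian plans between $\mu_{kl,s}$ and $\mu_{kl,t}$ produces an admissible GMM coupling between $\mu_s$ and $\mu_t$ in~\eqref{eq:MW2}, of cost $\sum_{kl} w^*_{kl} W_2^2(\mu_{kl,s},\mu_{kl,t}) = (t-s)^2 \sum_{kl} w^*_{kl} W_2^2(\mu_0^k,\mu_1^l)$ by the constant-speed property of each Gaussian geodesic. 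Hence $MW_2(\mu_s,\mu_t)\le (t-s)\,MW_2(\mu_0,\mu_1)$.

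Finally, I would upgrade these bounds to equalities using the triangle inequality just proved: for $0\le s\le t\le 1$,
\[ MW_2(\mu_0,\mu_1) \le MW_2(\mu_0,\mu_s)+MW_2(\mu_s,\mu_t)+MW_2(\mu_t,\mu_1) \le \big(s+(t-s)+(1-t)\big)MW_2(\mu_0,\mu_1), \]
which equals $MW_2(\mu_0,\mu_1)$. Every inequality is therefore an equality, forcing $MW_2(\mu_s,\mu_t) = (t-s)\,MW_2(\mu_0,\mu_1)$ for all $s\le t$. Thus $(\mu_t)_{t\in[0,1]}$ is a constant-speed geodesic contained in $GMM_d(\infty)$, which shows the space is geodesic and completes the proof.
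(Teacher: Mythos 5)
Your proposal is correct, but your treatment of the triangle inequality --- the crux, as you note --- takes a genuinely different route from the paper's. You glue the two optimal \emph{discrete} couplings $w^{01}$ and $w^{12}$ along the shared marginal $\pi_1$ via $W_{klm}=w^{01}_{kl}w^{12}_{lm}/\pi_1^l$, push the component-level triangle inequality for $W_2$ through Minkowski's inequality in the $W$-weighted $\ell^2$ norm, and collapse the marginals; this rests entirely on the discrete equivalence of Proposition~\ref{prop:MW2}. The paper instead works purely with the continuous formulation: it applies the classical gluing lemma to the GMM plans $\gamma_{01},\gamma_{12}$ on $\R^{2d}$, disintegrating with respect to $\mu_1$, and must then verify that the conditional laws and the integrated plan $\gamma_{02}$ remain Gaussian mixtures before invoking Cauchy--Schwarz. (The authors explicitly acknowledge that the discrete route is the easy one and say they give the continuous argument for completeness.) Your version buys elementarity and sidesteps the somewhat delicate claim that disintegrations of GMMs are again GMMs, at the cost of making the metric structure appear to hinge on the discrete reformulation; the paper's version shows the triangle inequality is intrinsic to the coupling-restriction definition, which is the cleaner conceptual statement. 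The remaining steps --- symmetry, the identity of indiscernibles via $MW_2\ge W_2$, and the geodesic construction $\mu_t=\rmP_t\#\gamma^{\ast}$ with the bound $MW_2(\mu_s,\mu_t)\le(t-s)\,MW_2(\mu_0,\mu_1)$ upgraded to equality by the triangle inequality --- coincide with the paper's proof.
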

This proposition can be proved very easily by making use of the
discrete formulation~\eqref{eq:DW_conj} of the distance (see for
instance~\cite{chen2019optimal}). For the sake of completeness, we
provide in the following a proof of the proposition using only the
continuous formulation of $MW_2$. 

\begin{proof}
First, observe that $MW_2$ is obviously symmetric and positive.
It is also clear that for any Gaussian mixture $\mu$,
$MW_2(\mu,\mu)=0$. Conversely, assume that $MW_2(\mu_0,\mu_1)=0$, it
implies that $W_2(\mu_0,\mu_1) = 0$ and thus $\mu_0 =\mu_1$ since
$W_2$ is a distance.  

It  remains to show that $MW_2$ satisfies the triangle
inequality. This is a classical consequence of the gluing lemma, but
we must be careful to check that the constructed measure remains a Gaussian mixture. Let $\mu_0,\mu_1, \mu_2$ be three Gaussian mixtures on $\R^d$.
Let $\gamma_{01}$ and  $\gamma_{12}$ be optimal plans respectively for
$(\mu_0,\mu_1)$ and  $(\mu_1,\mu_2)$ for the problem $MW_2$ (which
means that $\gamma_{01}$ and  $\gamma_{12}$  are both GMM on
$\R^{2d}$). The classical gluing lemma consists in disintegrating
$\gamma_{01}$ and $\gamma_{12}$ into  
 \[d\gamma_{01}(y_0,y_1) = d\gamma_{01}(y_0|y_1)d\mu_1(y_1) \quad
 \text{and} \quad d\gamma_{12}(y_1,y_2) =
 d\gamma_{12}(y_2|y_1)d\mu_1(y_1),\] 
 and to define
 \[d\gamma_{012}(y_0,y_1,y_2) = d\gamma_{01}(y_0|y_1)d\mu_1(y_1)d\gamma_{12}(y_2|y_1),\]
which boils down to assume independence conditionnally to the value of $y_1$. 
 Since $\gamma_{01}$ and  $\gamma_{12}$ are Gaussian mixtures on
 $\R^{2d}$, the conditional distributions $d\gamma_{01}(y_0|y_1)$ and
 $d\gamma_{12}(y_2|y_1)$ are also Gaussian mixtures for all $y_1$ in
 the support of $\mu_1$ (recalling that $\mu_1$ is the 
 marginal on $y_1$ of both $\gamma_{01}$ and $\gamma_{12}$). If we
 define a distribution $\gamma_{02}$ by integrating $\gamma_{012}$
 over the variable $y_1$, {\it i.e.} 
 \[d\gamma_{02}(y_0,y_2) = \int_{y_1 \in \R^d}
 d\gamma_{012}(y_0,y_1,y_2) = \int_{y_1 \in
   \mathrm{Supp}(\mu_1)}d\gamma_{01}(y_0|y_1)d\mu_1(y_1)d\gamma_{12}(y_2|y_1) \] 
then $\gamma_{02}$ is obviously also a Gaussian mixture on $\R^{2d}$
with marginals $\mu_0$ and $\mu_2$. 
The rest of the proof is classical. Indeed, we can write 
\begin{align*}
MW_2^2(\mu_0,\mu_2) &\leq \int_{\R^d\times \R^d} \|y_0-y_2\|^2d\gamma_{02}(y_0,y_2)=  \int_{\R^d\times \R^d\times\R^d} \|y_0-y_2\|^2d\gamma_{012}(y_0,y_1,y_2).
\end{align*}
Writing $\|y_0 - y_2\|^2 = \|y_0-y_1\|^2 + \|y_1-y_2\|^2 +  2\langle y_0-y_1,y_1-y_2\rangle $ (with $\langle \;,\;\rangle $ the Euclidean scalar product on $\R^d$), and using the Cauchy-Schwarz inequality, it follows that
\begin{align*}
MW_2^2(\mu_0,\mu_2) 
&  \le\left(\sqrt{\int_{\R^{2d}} \|y_0-y_1\|^2 d\gamma_{01}(y_0,y_1)}
  +\sqrt{ \int_{\R^{2d}} \|y_1-y_2\|^2 d\gamma_{12}(y_1,y_2)}\right)^2  .
\end{align*}
The triangle inequality follows by taking for $\gamma_{01}$ (resp. $\gamma_{12}$) the optimal plan for $MW_2$ between $\mu_0$ and $\mu_1$ (resp. $\mu_1$ and $\mu_2$).

\medskip

Now, let us show that $GMM_{d}(\infty)$ equipped with the distance
$MW_2$ is a geodesic space. 
For a path $\rho=(\rho_t)_{t\in[0,1]}$ in $GMM_d(\infty)$ (meaning that each
$\rho_t$ is a GMM on $\R^d$), we can define its length for $MW_2$ by
$$ \mathrm{Len}_{MW_2}(\rho) = \mathrm{Sup}_{N; 0=t_0\leq t_1 ... \leq t_N=1}
\quad \sum_{i=1}^N MW_2(\rho_{t_{i-1}}, \rho_{t_i})  \in [0,+\infty]
.$$
Let $\mu_0=\sum_k \pi_0^k \mu_0^k$ and
$\mu_1=\sum_l \pi_1^l \mu_1^l$ be two GMM.
Since $MW_2$ satifies the
triangle inequality, we always have that $\mathrm{Len}_{MW_2}(\rho)
\geq MW_2(\mu_0,\mu_1)$ for all paths $\rho$ such that $\rho_0=\mu_0$
and $\rho_1=\mu_1$.
To prove that $(GMM_d(\infty),MW_2)$ is a geodesic space we just have to
exhibit a path $\rho$ connecting $\mu_0$ to $\mu_1$ and such that its
length is equal to $MW_2(\mu_0,\mu_1)$.

We write $\gamma^{\ast}$ the optimal transport plan between $\mu_0$
and $\mu_1$. For $t\in(0,1)$ we can define $$\mu_t=(\rmP_t)\#\gamma^{\ast}.$$
Let $t<s \in [0,1]$ and define
$\gamma_{t,s}^{\ast}=(\rmP_t,\rmP_s)\#\gamma^{\ast}$. Then
$\gamma_{t,s}^{\ast}\in\Pi(\mu_t,\mu_s)\cap GMM_{2d}(\infty)$ and therefore
\begin{align*}
MW_2(\mu_t,\mu_s)^2  & = 
  \min_{\widetilde{\gamma}\in\Pi(\mu_t,\mu_s)\cap GMM_{2d}(\infty)} \iint \|
  y_0-y_1\|^2 \, d\widetilde{\gamma}(y_0,y_1) \\
& \leq  \iint \|  y_0-y_1\|^2 \, d\gamma_{t,s}^{\ast}(y_0,y_1) = \iint \|
         \rmP_t(y_0,y_1) - \rmP_s(y_0,y_1)\|^2 \, d\gamma^{\ast}(y_0,y_1) \\
& = \iint \| (1-t)y_0 + ty_1-(1-s)y_0-sy_1\|^2 \, d\gamma^{\ast}(y_0,y_1) \\
&=  (s-t)^2 MW_2(\mu_0,\mu_1)^2 .
\end{align*}

Thus we have that $MW_2(\mu_t,\mu_s)\leq (s-t) MW_2(\mu_0,\mu_1)$
Now, by the triangle inequality, 
\begin{align*}
MW_2(\mu_0,\mu_1) &\leq MW_2(\mu_0,\mu_t)
+MW_2(\mu_t,\mu_s)+MW_2(\mu_s,\mu_1) \\
&\leq (t+s-t+1-s) MW_2(\mu_0,\mu_1).
\end{align*}

Therefore all inequalities are equalities, and $MW_2(\mu_t,\mu_s)=
(s-t) MW_2(\mu_0,\mu_1)$ for all $0\leq t\leq s\leq 1$. This implies
that the $MW_2$ length of the path $(\mu_t)_t$ is equal to
$MW_2(\mu_0,\mu_1)$. It allows us to conclude that $(GMM_d(\infty),MW_2)$
is a geodesic space, and we have also given the explicit expression of
the geodesic.
\end{proof}

The following Corollary is a direct consequence of the previous results.
\begin{Corollary}
The {barycenters} between $\mu_0=\sum_k \pi_0^k \mu_0^k$ and
$\mu_1=\sum_l \pi_1^l \mu_1^l$ all belong to $GMM_d(\infty)$ and can be written explicitely as
$$\forall t\in[0,1] , \quad \mu_t = \rmP_t\#\gamma^{\ast} = \sum_{k,l} w^\ast_{k,l}
\mu_t^{k,l} ,$$
where $w^\ast$ is an optimal solution of~\eqref{eq:DW_conj}, and
$\mu_t^{k,l}$ is the displacement interpolation between $\mu_0^k$ and
$\mu_1^l$. When $\Sigma_0^k$ is non-singular, it is given by
$$\mu_t^{k,l} = ((1-t)\mathrm{Id}+tT_{k,l})\# \mu_0^k  ,$$
with $T_{k,l}$ the affine transport map between $\mu_0^k$ and $\mu_1^l$ given by
Equation~\eqref{eqT:eq}. These barycenters have less than $K_0+K_1-1$ components.   
\end{Corollary}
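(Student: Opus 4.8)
The plan is to obtain the barycenter path directly from the explicit optimal plan $\gamma^*$ produced in Proposition~\ref{prop:MW2}, using the fact (already established in the proof of Proposition~\ref{prop:metric}) that the displacement interpolation for $MW_2$ between $\mu_0$ and $\mu_1$ is precisely the path $\mu_t = \rmP_t\#\gamma^*$. Thus the whole corollary reduces to pushing $\gamma^*$ forward by the interpolation maps $\rmP_t$ and recognising the components of the result.

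First I would recall from Proposition~\ref{prop:MW2} that an optimal plan can be written $\gamma^* = \sum_{k,l} w^*_{k,l}\,\gamma_{k,l}$, where $\gamma_{k,l}$ is the $W_2$-optimal plan between the Gaussian components $\mu_0^k$ and $\mu_1^l$. Since push-forward commutes with convex combinations of measures, $\mu_t = \rmP_t\#\gamma^* = \sum_{k,l} w^*_{k,l}\,(\rmP_t\#\gamma_{k,l})$. Setting $\mu_t^{k,l} := \rmP_t\#\gamma_{k,l}$, this is, by the very definition recalled in Section~\ref{sec:background}, the displacement interpolation between the two Gaussians $\mu_0^k$ and $\mu_1^l$. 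The weights form a probability vector: because $w^*\in\Pi(\pi_0,\pi_1)$ we have $w^*_{k,l}\ge 0$ and $\sum_{k,l} w^*_{k,l} = \sum_k \pi_0^k = 1$.

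Next I would invoke the explicit description of Gaussian geodesics recalled in Section~\ref{sec:background}: the displacement interpolation between two Gaussians is again a Gaussian measure $\NN(m_t,\Sigma_t)$. Hence each $\mu_t^{k,l}$ is Gaussian, and $\mu_t$ is a convex combination of Gaussians, so $\mu_t \in GMM_d(\infty)$. When $\Sigma_0^k$ is non-singular, $T_{k,l}$ is the affine map of Equation~\eqref{eqT:eq} and $\gamma_{k,l} = (\Id, T_{k,l})\#\mu_0^k$; since $\rmP_t(x,T_{k,l}(x)) = (1-t)x + tT_{k,l}(x)$, we obtain $\mu_t^{k,l} = ((1-t)\Id + tT_{k,l})\#\mu_0^k$, which is the stated formula.

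Finally, for the bound on the number of components I would reuse the earlier corollary guaranteeing an optimal $w^*$ supported on at most $K_0+K_1-1$ entries (a vertex of the transport polytope $\Pi(\pi_0,\pi_1)$); for such a $w^*$ at most $K_0+K_1-1$ of the $\mu_t^{k,l}$ carry positive weight. I do not expect any genuine obstacle here, as the argument is essentially bookkeeping on top of Propositions~\ref{prop:MW2} and~\ref{prop:metric}. The only point requiring mild care is the degenerate case where $\Sigma_0^k$ is singular: there the affine formula~\eqref{eqT:eq} no longer applies, but one still concludes that $\mu_t^{k,l}$ is Gaussian by using the pseudo-inverse form of the Gaussian displacement interpolation recalled in Section~\ref{sec:background}, so that $\mu_t$ remains a genuine Gaussian mixture.
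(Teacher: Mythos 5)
Your argument is correct and is exactly the route the paper intends: the corollary is stated there as a ``direct consequence of the previous results,'' namely the decomposition $\gamma^* = \sum_{k,l} w^*_{k,l}\gamma_{k,l}$ from Proposition~\ref{prop:MW2}, the identification of $\rmP_t\#\gamma^*$ as the geodesic from the proof of Proposition~\ref{prop:metric}, and the component-count corollary, all of which you assemble in the same way. No gaps; your remark on the singular-covariance case is a welcome extra precision.
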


\subsubsection{1D and 2D barycenter examples}

\paragraph{One dimensional case}

\begin{figure}[h!]
\setlength{\tempwidth}{.16\linewidth}
\settoheight{\tempheight}{\includegraphics[width=\tempwidth]{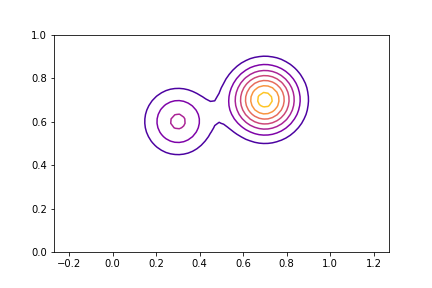}}%
\centering
\hspace{\baselineskip}
\columnname{$t=0.0$}\hfil
\columnname{$t=0.2$}\hfil
\columnname{$t=0.4$}\hfil
\columnname{$t=0.6$}\hfil
\columnname{$t=0.8$}\hfil
\columnname{$t=1.0$}\\
 \rowname{$W_2$}
 \includegraphics[width=\tempwidth]{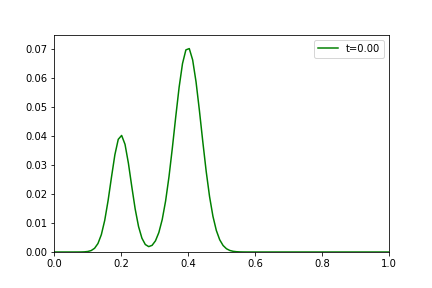} \hfil
\includegraphics[width=\tempwidth]{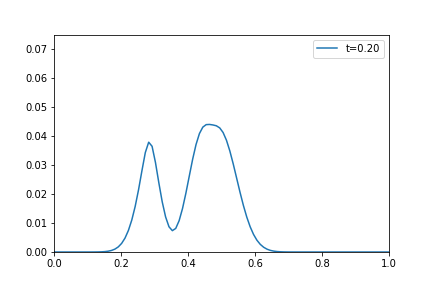}\hfil
\includegraphics[width=\tempwidth]{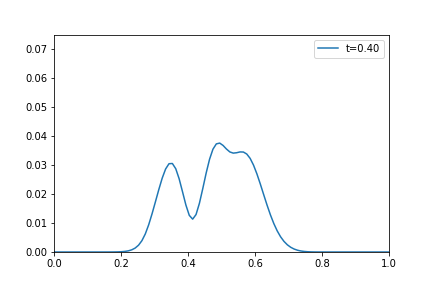}\hfil
\includegraphics[width=\tempwidth]{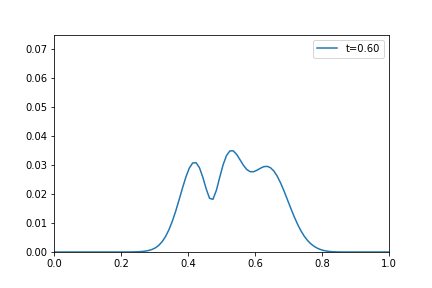}\hfil
\includegraphics[width=\tempwidth]{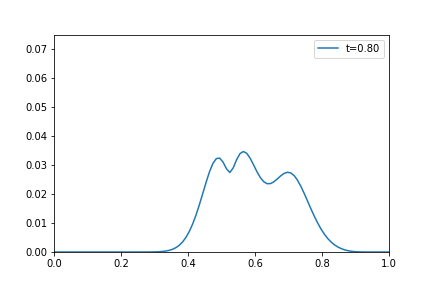}\hfil
\includegraphics[width=\tempwidth]{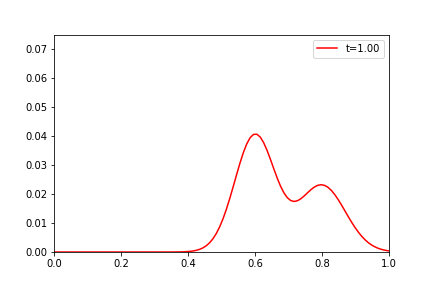}

\rowname{$MW_2$}
\includegraphics[width=\tempwidth]{barycenter1D_00.png}\hfil
\includegraphics[width=\tempwidth]{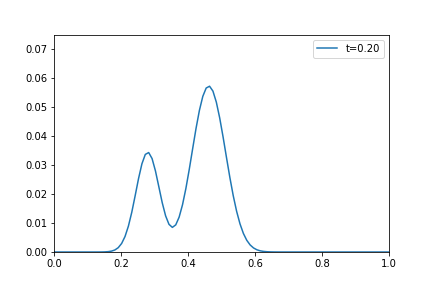}\hfil
\includegraphics[width=\tempwidth]{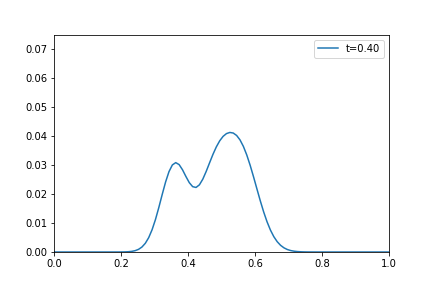}\hfil
\includegraphics[width=\tempwidth]{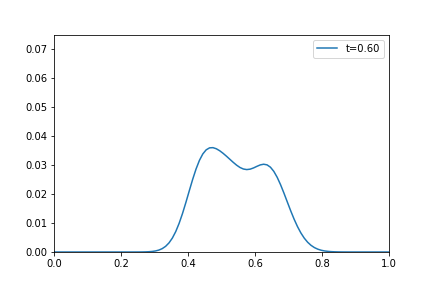}\hfil
 \includegraphics[width=\tempwidth]{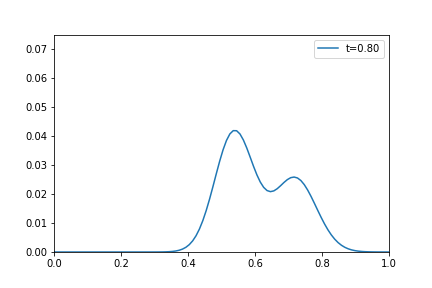}\hfil
 \includegraphics[width=\tempwidth]{barycenter1D_05.png}

  \caption{Barycenters $\mu_t$ between two Gaussian mixtures
    $\mu_0$ (green curve) and $\mu_1$ (red curve). Top: barycenters
    for the metric $W_2$. Bottom: barycenters
    for the metric $MW_2$. The barycenters are computed for $t=0.2,0.4,0.6,0.8$. \label{1D_barycenters}}
\end{figure}

Figure~\ref{1D_barycenters} shows barycenters $\mu_t$ for
$t=0.2,0.4,0.6,0.8$    
between the $\mu_0$ and $\mu_1$ defined in
Section~\ref{sec:example1}, for  both the metric $W_2$ and
$MW_2$. Observe that the barycenters computed for $MW_2$ are a bit
more regular (we know that they are mixtures of at most 3 Gaussian
components) than those obtained for $W_2$.

\paragraph{Two dimensional case}

\begin{figure}[h!]
\setlength{\tempwidth}{.16\linewidth}
\settoheight{\tempheight}{\includegraphics[width=\tempwidth]{barycenter2D_00.png}}%
\centering
\hspace{\baselineskip}
\columnname{$t=0.0$}\hfil
\columnname{$t=0.2$}\hfil
\columnname{$t=0.4$}\hfil
\columnname{$t=0.6$}\hfil
\columnname{$t=0.8$}\hfil
\columnname{$t=1.0$}\\
\centering
 \rowname{$W_2$}
 \includegraphics[width=\tempwidth]{barycenter2D_00.png} \hfil
\includegraphics[width=\tempwidth]{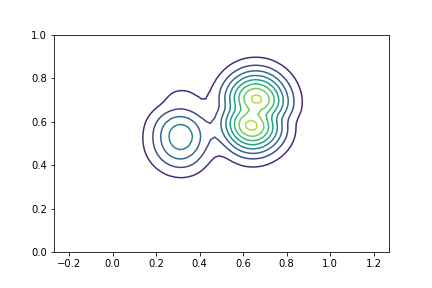}\hfil
\includegraphics[width=\tempwidth]{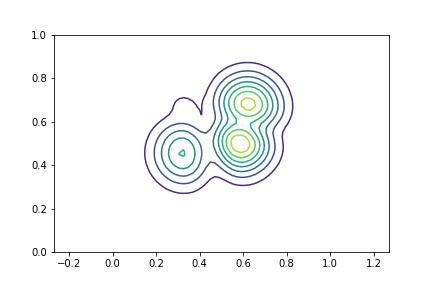}\hfil
\includegraphics[width=\tempwidth]{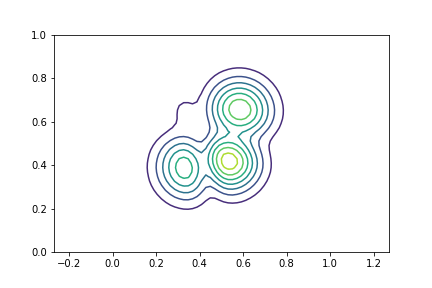}\hfil
\includegraphics[width=\tempwidth]{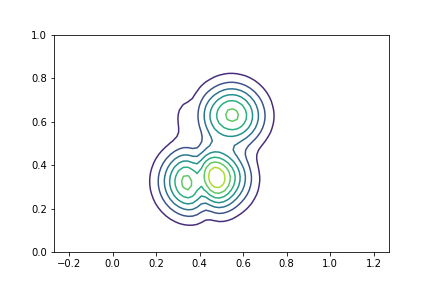}\hfil
\includegraphics[width=\tempwidth]{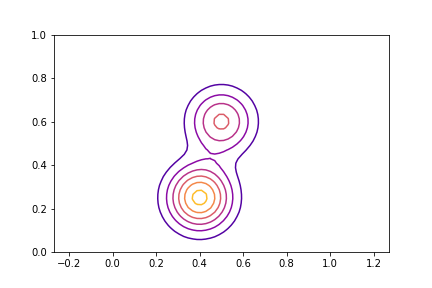}

\rowname{$MW_2$}
\includegraphics[width=\tempwidth]{barycenter2D_00.png}\hfil
\includegraphics[width=\tempwidth]{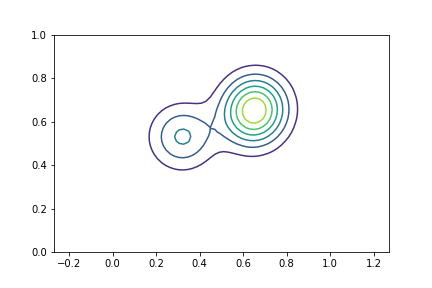}\hfil
\includegraphics[width=\tempwidth]{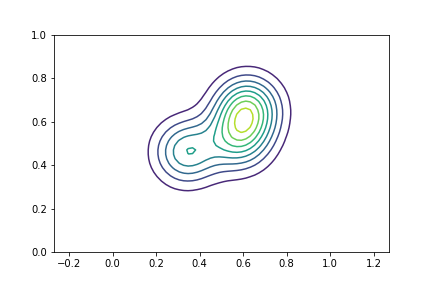}\hfil
\includegraphics[width=\tempwidth]{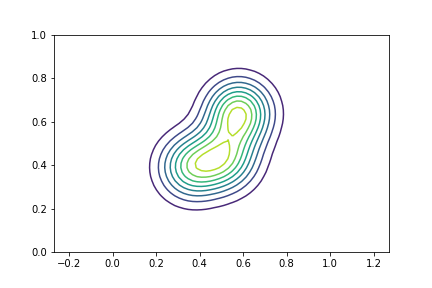}\hfil
 \includegraphics[width=\tempwidth]{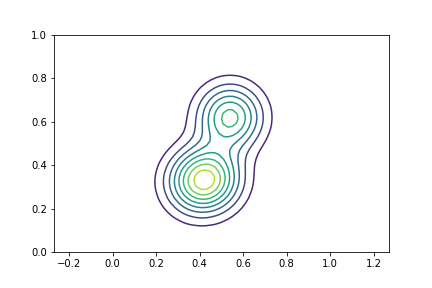}\hfil
 \includegraphics[width=\tempwidth]{barycenter2D_05.png}
 
\caption{Barycenters $\mu_t$ between two Gaussian mixtures $\mu_0$ (first column) and $\mu_1$ (last column). Top: barycenters for the metric $W_2$. Bottom: barycenters for the metric $MW_2$.\label{2D_barycenters} The barycenters are computed for $t=0.2,0.4,0.6,0.8$.}
\end{figure}

Figure~\ref{2D_barycenters} shows barycenters $\mu_t$  between the following two dimensional mixtures
\[\mu_0 = 0.3 \NN\left(
  \begin{pmatrix}
    0.3\\0.6
  \end{pmatrix}
,0.01 I_2\right)+ 0.7 \NN\left(
  \begin{pmatrix}
    0.7\\0.7
  \end{pmatrix}
,0.01 I_2\right),\]
\[\mu_1 = 0.4 \NN\left(
  \begin{pmatrix}
    0.5\\0.6
  \end{pmatrix}
,0.01 I_2\right)+ 0.6 \NN\left(
  \begin{pmatrix}
    0.4\\0.25
  \end{pmatrix}
,0.01 I_2\right),\]
where $I_2$ is the $2\times 2$ identity matrix.
Notice that the $MW_2$ geodesic looks much more regular, each barycenter is a
mixture of less than three Gaussians.

\subsection{Comparison between $MW_2$ and $W_2$}
\label{sec:comparison}

\begin{prop}
\label{prop:ineq}
Let $\mu_0 \in GMM_d(K_0)$ and $\mu_1 \in GMM_d(K_1)$ be two Gaussian mixtures, written as in~\eqref{eq:GMM}.
Then,   
\[W_2(\mu_0,\mu_1) \leq MW_2(\mu_0,\mu_1) \leq W_2(\mu_0,\mu_1) + \sum_{i=0,1}\left(2\sum_{k=1}^{K_i} \pi_i^k \mathrm{trace}(\Sigma_i^k)\right)^{\frac 1 2}.\]
The left-hand side inequality is attained when for instance
\begin{itemize}
\item $\mu_0$ and $\mu_1$ are both composed of only one Gaussian component,
\item $\mu_0$ and $\mu_1$ are finite linear combinations of Dirac
  masses, 
\item $\mu_1$ is obtained from $\mu_0$ by an affine transformation.
\end{itemize}
\end{prop}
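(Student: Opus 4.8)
The plan is to treat the two inequalities separately and then the equality cases. The left-hand inequality $W_2\le MW_2$ is immediate and was already noted after Definition~\ref{def:MW2}: since $\Pi(\mu_0,\mu_1)\cap GMM_{2d}(\infty)\subseteq\Pi(\mu_0,\mu_1)$, restricting the admissible couplings can only raise the infimum. All the content is in the right-hand inequality, where the difficulty is to obtain the \emph{sharp} constant while keeping $W_2(\mu_0,\mu_1)$ itself on the right-hand side. A naive route — collapsing each component to its mean $\bar\mu_i=\sum_k\pi_i^k\delta_{m_i^k}$ and using the triangle inequality for $MW_2$ together with $MW_2(\mu_i,\bar\mu_i)^2\le\sum_k\pi_i^k\tr\Sigma_i^k$ — routes the estimate through $W_2(\bar\mu_0,\bar\mu_1)$, which is in general \emph{not} controlled by $W_2(\mu_0,\mu_1)$, and it produces a factor $2$ \emph{outside} the square roots, strictly weaker than the claimed factor $2$ \emph{inside}. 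I would therefore build a competitor GMM coupling directly from a genuine $W_2$-optimal plan.

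Let $(Y_0,Y_1)$ be an optimal coupling for $W_2(\mu_0,\mu_1)$, so $\E\|Y_0-Y_1\|^2=W_2^2(\mu_0,\mu_1)$, and enrich it with latent component labels: given $Y_0$, draw $Z_0$ from the posterior (``responsibility'') weights $k\mapsto\pi_0^k g_{m_0^k,\Sigma_0^k}(Y_0)/\sum_{k'}\pi_0^{k'}g_{m_0^{k'},\Sigma_0^{k'}}(Y_0)$, and likewise draw $Z_1$ given $Y_1$, conditionally independently given $(Y_0,Y_1)$ (for degenerate components one replaces these densities by the general disintegration of the mixture). A one-line computation gives $\Pr(Z_0=k)=\pi_0^k$ with $Y_0\mid Z_0=k\sim\mu_0^k$, and symmetrically for $Z_1$, so that $w_{kl}:=\Pr(Z_0=k,Z_1=l)$ belongs to $\Pi(\pi_0,\pi_1)$. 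The discrete formula of Proposition~\ref{prop:MW2} then yields
\[MW_2^2(\mu_0,\mu_1)\le\sum_{k,l}w_{kl}\,W_2^2(\mu_0^k,\mu_1^l)=\E\big[W_2^2(\mu_0^{Z_0},\mu_1^{Z_1})\big].\]

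The crux is then the almost-sure triangle inequality funnelling each term through the Diracs at the realized points,
\[W_2(\mu_0^{Z_0},\mu_1^{Z_1})\le W_2(\mu_0^{Z_0},\delta_{Y_0})+\|Y_0-Y_1\|+W_2(\delta_{Y_1},\mu_1^{Z_1}),\]
whose payoff is the identity $W_2^2(\NN(m,\Sigma),\delta_y)=\|y-m\|^2+\tr\Sigma$: it fuses the within-component ``noise'' displacement and the covariance (Bures) cost into a single quantity. Taking $L^2$-norms over the joint law and applying Minkowski's inequality gives
\[MW_2(\mu_0,\mu_1)\le\sqrt{\E\big[\|Y_0-m_0^{Z_0}\|^2+\tr\Sigma_0^{Z_0}\big]}+W_2(\mu_0,\mu_1)+\sqrt{\E\big[\|Y_1-m_1^{Z_1}\|^2+\tr\Sigma_1^{Z_1}\big]},\]
and since $Y_i\mid Z_i=k\sim\mu_i^k$ each inner expectation equals $2\sum_k\pi_i^k\tr\Sigma_i^k$, giving exactly the stated bound. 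This construction is the main obstacle: one must manufacture an admissible GMM coupling out of the generally non-Gaussian optimal plan, and it is precisely the routing through $\delta_{Y_0},\delta_{Y_1}$ that delivers the sharp constant $2$ inside the square roots rather than a factor $2$ outside.

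For the equality cases I would, in each instance, exhibit a $W_2$-optimal plan that already lies in $GMM_{2d}(\infty)$, hence is admissible in~\eqref{eq:MW2} and forces $MW_2=W_2$. For two single Gaussians the optimal plan is the degenerate Gaussian carried by the graph of the affine map~\eqref{eqT:eq}; for finite combinations of Diracs it is a finite combination of Diracs on $\R^{2d}$, i.e. degenerate Gaussians; and when $\mu_1=T\#\mu_0$ with $T$ affine and optimal, the plan $(\mathrm{Id},T)\#\mu_0=\sum_k\pi_0^k\,(\mathrm{Id},T)\#\mu_0^k$ is a mixture of degenerate Gaussians because affine images of Gaussians are Gaussian. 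In all three cases the optimal $W_2$ plan is itself a GMM, so equality holds.
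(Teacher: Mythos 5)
Your proof is correct, and the upper bound is obtained by a genuinely different route from the paper's. The paper approximates $\mu_0$ and $\mu_1$ by sequences of Dirac mixtures $\mu_i^n$, applies the $MW_2$ triangle inequality $MW_2(\mu_0,\mu_1)\le W_2(\mu_0^n,\mu_1^n)+MW_2(\mu_0,\mu_0^n)+MW_2(\mu_1,\mu_1^n)$, evaluates the two correction terms exactly via its lemma on the $MW_2$ distance between a GMM and a Dirac mixture ($MW_2^2(\mu_i,\mu_i^n)=W_2^2(\tilde\mu_i,\mu_i^n)+\sum_k\pi_i^k\tr(\Sigma_i^k)$), bounds $W_2^2(\tilde\mu_i,\mu_i)\le\sum_k\pi_i^k\tr(\Sigma_i^k)$ by the coupling that matches each component with the Dirac at its mean, and passes to the limit; note that the term $W_2(\mu_0,\mu_1)$ survives there because the argument is routed through $W_2(\mu_0^n,\mu_1^n)\to W_2(\mu_0,\mu_1)$ and not through the mean-collapsed mixtures, so the obstruction you flag for the ``naive route'' does not arise in the paper's proof either. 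You instead build an explicit admissible weight matrix $w\in\Pi(\pi_0,\pi_1)$ by decorating a $W_2$-optimal coupling $(Y_0,Y_1)$ with latent labels, then run the pointwise triangle inequality through $\delta_{Y_0}$ and $\delta_{Y_1}$ followed by Minkowski in $L^2$; this is fully constructive, needs neither the density lemma nor a limiting argument, and makes the origin of the factor $2$ inside the square roots transparent (within-component variance $\E\|Y_i-m_i^{Z_i}\|^2$ plus the trace term $\E[\tr(\Sigma_i^{Z_i})]$, each worth $\sum_k\pi_i^k\tr(\Sigma_i^k)$). Both arguments yield the identical constant. Your treatment of the equality cases (which the paper asserts without proof) is sound for the first two bullets; for the third you quietly strengthen the hypothesis to ``$T$ affine \emph{and} $W_2$-optimal'', which is indeed what your argument requires: for an affine $T$ that is not an optimal map, $(\mathrm{Id},T)\#\mu_0$ is still an admissible GMM coupling but only certifies $MW_2^2(\mu_0,\mu_1)\le\int\|x-T(x)\|^2\,d\mu_0(x)$, not equality with $W_2$.
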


As we already noticed it, the first inequality is obvious and follows
from the definition of $MW_2$. It might not be completely intuitive that
$MW_2$ can indeed be strictly larger than $W_2$ because of the density
property of $GMM_d(\infty)$ in $\PP_2(\R^d)$. This follows from the
fact that our optimization problem has constraints
$\gamma\in\Pi(\mu_0,\mu_1)$. Even if any measure $\gamma$ in
$\Pi(\mu_0,\mu_1)$ can be approximated by a sequence of Gaussian
mixtures, this sequence of Gaussian mixtures will generally not belong
to $\Pi(\mu_0,\mu_1)$, hence explaining the difference between $MW_2$ and $W_2$.

In order to show that $MW_2$ is always smaller than the sum of $W_2$
plus a term depending on the trace of the covariance matrices of the two Gaussian mixtures, we start with a lemma which makes more explicit the distance $MW_2$ between a Gaussian mixture and a mixture of Dirac distributions.   

\begin{lemma}
\label{lemma1}
 Let $\mu_0 = \sum_{k=1}^{K_0} \pi_0^k \mu_0^k $ with $\mu_0^k = \NN(m_0^k,\Sigma_0^k)$ and $\mu_{1} =  \sum_{k=1}^{K_1} \pi_1^k \delta_{m_1^k}$. Let $\tilde{\mu}_0 = \sum_{k=1}^{K_0} \pi_0^k \delta_{m_0^k}$ ($\tilde{\mu}_0$ only retains the means of $\mu_0$).
Then,   
\[MW_2^2(\mu_0,\mu_1) = W_2^2( \tilde{\mu}_0,\mu_1) + \sum_{k=1}^{K_0} \pi_0^k\mathrm{trace}(\Sigma_0^k).\]
\end{lemma}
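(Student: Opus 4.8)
The plan is to reduce everything to the discrete formulation of Proposition~\ref{prop:MW2}, compute the pairwise Gaussian-to-Dirac cost explicitly, and then separate the variance contribution from the transport of the means.

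First I would apply the discrete form~\eqref{eq:DW_conj}, which gives
\[
MW_2^2(\mu_0,\mu_1) = \min_{w\in\Pi(\pi_0,\pi_1)} \sum_{k,l} w_{kl}\, W_2^2(\mu_0^k,\delta_{m_1^l}).
\]
The key elementary computation is the cost between a single Gaussian component and a single Dirac mass. Since $\delta_{m_1^l}=\NN(m_1^l,0)$ forces the second coordinate to equal $m_1^l$ almost surely, the only coupling in $\Pi(\mu_0^k,\delta_{m_1^l})$ is $\mu_0^k\otimes\delta_{m_1^l}$, so directly from the definition~\eqref{eq:wasserstein_definition},
\[
W_2^2(\mu_0^k,\delta_{m_1^l}) = \int_{\R^d}\|y-m_1^l\|^2\,d\mu_0^k(y) = \|m_0^k-m_1^l\|^2 + \mathrm{trace}(\Sigma_0^k),
\]
by the bias--variance decomposition (equivalently, this is~\eqref{eq:wasserstein_gaussian} with $\Sigma_1^l=0$, for which the matrix $((\Sigma_0^k)^{1/2}\Sigma_1^l(\Sigma_0^k)^{1/2})^{1/2}$ vanishes).

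The main step is then to notice that the variance term decouples from the minimization. Since $\mathrm{trace}(\Sigma_0^k)$ does not depend on $l$, the marginal constraint $\sum_l w_{kl}=\pi_0^k$ gives
\[
\sum_{k,l} w_{kl}\,\mathrm{trace}(\Sigma_0^k) = \sum_k \pi_0^k\,\mathrm{trace}(\Sigma_0^k)
\]
for every $w\in\Pi(\pi_0,\pi_1)$. This quantity is a constant independent of $w$, so it factors out of the minimum, leaving
\[
MW_2^2(\mu_0,\mu_1) = \Big(\min_{w\in\Pi(\pi_0,\pi_1)} \sum_{k,l} w_{kl}\,\|m_0^k-m_1^l\|^2\Big) + \sum_k \pi_0^k\,\mathrm{trace}(\Sigma_0^k).
\]

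Finally I would identify the remaining minimum as $W_2^2(\tilde{\mu}_0,\mu_1)$. Both $\tilde{\mu}_0=\sum_k\pi_0^k\delta_{m_0^k}$ and $\mu_1=\sum_l\pi_1^l\delta_{m_1^l}$ are finitely supported, so every coupling $\gamma\in\Pi(\tilde{\mu}_0,\mu_1)$ is supported on the finite grid $\{(m_0^k,m_1^l)\}_{k,l}$ and is described exactly by a weight matrix $w\in\Pi(\pi_0,\pi_1)$; evaluating~\eqref{eq:wasserstein_definition} then yields $W_2^2(\tilde{\mu}_0,\mu_1)=\min_{w\in\Pi(\pi_0,\pi_1)}\sum_{k,l}w_{kl}\|m_0^k-m_1^l\|^2$. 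Substituting gives the claimed identity. There is no serious obstacle here; the only points requiring a little care are checking that the cross term in the Gaussian cost formula indeed vanishes for a degenerate (Dirac) target, which is immediate, and that the discrete linear program for finitely supported measures coincides with $W_2^2$, which follows directly from the Kantorovich definition~\eqref{eq:wasserstein_definition}.
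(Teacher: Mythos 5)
Your proof is correct and follows essentially the same route as the paper's: apply the discrete formulation of Proposition~\ref{prop:MW2}, compute $W_2^2(\mu_0^k,\delta_{m_1^l})=\|m_0^k-m_1^l\|^2+\mathrm{trace}(\Sigma_0^k)$, use the marginal constraint to pull the trace term out of the minimization, and recognize the remaining linear program as $W_2^2(\tilde{\mu}_0,\mu_1)$. The paper's version is just a more compressed chain of equalities; your added justifications (the product coupling being the only element of $\Pi(\mu_0^k,\delta_{m_1^l})$, and the identification of the discrete LP with the Kantorovich problem for finitely supported measures) are correct and fill in exactly the steps the paper leaves implicit.
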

\begin{proof}
  \begin{eqnarray*}
    MW_2^2(\mu_0,\mu_1) &=& \inf_{w \in \Pi(\pi_0,\pi_1)} \sum_{k,l}w_{kl} W_2^2(\mu_0^k,\delta_{m_1^l})= \inf_{w \in \Pi(\pi_0,\pi_1)} \sum_{k,l}w_{kl} \left(\|m_1^l - m_0^k\|^2 + \mathrm{trace}(\Sigma_0^k)\right) \\
&=& \inf_{w \in \Pi(\pi_0,\pi_1)} \sum_{k,l}w_{kl} \|m_1^l - m_0^k\|^2 + \sum_{k}\pi_0^k\mathrm{trace}(\Sigma_0^k) = W_2^2(\tilde{\mu}_0,\mu_1) + \sum_{k=1}^{K_0} \pi_0^k\mathrm{trace}(\Sigma_0^k).
  \end{eqnarray*}
\end{proof}

In other words, the squared distance $MW_2^2$ between $\mu_0$ and
$\mu_1$ is the sum of the squared Wasserstein distance between
$\tilde{\mu}_0$ and $\mu_1$ and a linear combination of the traces of
the covariance matrices of the components of $\mu_0$. We are now in a position to show the other inequality between $MW_2$ and $W_2$. 

\begin{proof} [Proof of Proposition~\ref{prop:ineq}] 
Let $(\mu_0^n)_n$ and $(\mu_1^n)_n$ be two sequences of mixtures of Dirac masses respectively converging  to $\mu_0$ and $\mu_1$ in $\PP_2(\R^d)$.
Since  $MW_2$ is a distance,
  \begin{eqnarray*}
    MW_2(\mu_0,\mu_1) &\leq&  MW_2(\mu_0^n,\mu_1^n)+MW_2(\mu_0,\mu_0^n)+ MW_2(\mu_1,\mu_1^n) \\
&=&  W_2(\mu_0^n,\mu_1^n)+MW_2(\mu_0,\mu_0^n)+ MW_2(\mu_1,\mu_1^n).
  \end{eqnarray*}
We study in the following the limits of these three terms when $n\rightarrow +\infty$. 

First, observe that $MW_2(\mu_0^n,\mu_1^n) = W_2(\mu_0^n,\mu_1^n) \longrightarrow_{n\rightarrow \infty}W_2(\mu_0,\mu_1) $ since $W_2$ is continuous on $\PP_2(\R^d)$.  

Second, using Lemma~\ref{lemma1}, for $i=0,1$,
$$  MW_2^2(\mu_i,\mu_i^n) =   W_2^2( \tilde{\mu}_i,\mu_i^n) + \sum_{k=1}^{K_i} \pi_i^k\mathrm{trace}(\Sigma_i^k) 
\longrightarrow_{n\rightarrow \infty}  W_2^2( \tilde{\mu}_i,\mu_i)+
\sum_{k=1}^{K_i} \pi_i^k\mathrm{trace}(\Sigma_i^k). $$

Define the measure $d\gamma(x,y) = \sum_{k=1}^{K_i} \pi_i^k \delta_{m_i^k}(y)g_{m_i^k,\Sigma_i^k}(x)dx$, with $g_{m_i^k,\Sigma_i^k}$ the probability density function of the Gaussian distribution $\NN(m_i^k,\Sigma_i^k)$. The probability measure $\gamma$ belongs to $\Pi(\mu_i,\tilde{\mu}_i)$, so 
\begin{eqnarray*}
  W_2^2( \mu_i, \tilde{\mu}_i)&\leq& \int \|x - y\|^2 d\gamma(x,y) =  \sum_{k=1}^{K_i} \pi_i^k \int_{\R^d}\|x - m_i^k\|^2 g_{m_i^k,\Sigma_i^k}(x)dx \\ &=& \sum_{k=1}^{K_i} \pi_i^k \mathrm{trace}(\Sigma_i^k).
\end{eqnarray*}
  
We conclude that 
  \begin{eqnarray*}
    MW_2(\mu_0,\mu_1) &\leq&  \lim\inf_{n\rightarrow \infty} \left( W_2(\mu_0^n,\mu_1^n)+MW_2(\mu_0,\mu_0^n)+ MW_2(\mu_1,\mu_1^n) \right)\\
&\leq&  W_2(\mu_0,\mu_1) + \left(W_2^2( \tilde{\mu}_0,\mu_0) + \sum_{k=1}^{K_0} \pi_0^k\mathrm{trace}(\Sigma_0^k)\right)^{\frac 1 2} + \left(W_2^2(\tilde{\mu}_1,\mu_1) + \sum_{k=1}^{K_1} \pi_1^k\mathrm{trace}(\Sigma_1^k)\right)^{\frac 1 2}\\
&\leq&   W_2(\mu_0,\mu_1) + \left(2\sum_{k=1}^{K_0} \pi_0^k\mathrm{trace}(\Sigma_0^k)\right)^{\frac 1 2}+ \left(2\sum_{k=1}^{K_1} \pi_1^k\mathrm{trace}(\Sigma_1^k)\right)^{\frac 1 2}.
  \end{eqnarray*}
 
This ends the proof of the proposition.

\end{proof}

Observe that if $\mu$ is a Gaussian distribution $\NN(m,\Sigma)$ and $\mu^n$ a distribution supported by a finite number of points which converges to  $\mu$ in $\PP_2(\R^d)$, then 
$$ W_2^2(\mu,\mu^n)\longrightarrow_{n\rightarrow \infty} 0$$
 and
\[MW_2(\mu,\mu^n) = \left(W_2^2(\tilde{\mu},\mu^n) + \mathrm{trace}(\Sigma)\right)^{\frac 1 2} \longrightarrow_{n\rightarrow \infty} \left(2 \mathrm{trace}(\Sigma)\right)^{\frac 1 2} \neq 0.\]

Let us also remark that if $\mu_0$ and $\mu_1$ are Gaussian mixtures such that $\max_{k,i} \mathrm{trace}(\Sigma_i^k) \leq \varepsilon$, then
\[MW_2(\mu_0,\mu_1) \leq W_2(\mu_0,\mu_1) + 2\sqrt{2\varepsilon}.\]

\subsection{Generalization to other mixture models}

A natural question is to know if the methodology we have developped
here, and that restricts the set of possible coupling
measures to Gaussian mixtures, can be extended to other families of
mixtures. Indeed, in the image processing litterature, as well as in
many other fields, mixture models beyond Gaussian ones are widely
used, such as Generalized Gaussian Mixture Models~\cite{DeledalleGGM} or
mixtures of T-distributions~\cite{van2014student}, for instance. 
Now, to extend our methodology to other mixtures, we need two main
properties: (a) the identifiability property (that will ensure that there
is a canonical way to write a distribution as a mixture); and (b) a
marginal consistency property  (we need all the marginal of an element
of the family to remain in the same family). These two properties
permit in particular to generalize the proof of
Proposition~\ref{prop:MW2}. In order to make the discrete formulation
convenient for numerical computations, we also need  that  the $W_2$ distance between any two elements of the family must be easy to compute.

Starting from this last requirement, we can consider a family of
elliptical distributions, where the elements are of the form
$$\forall x\in\R^d ,   f_{m,\Sigma} (x) = C_{h,d,\Sigma} \,  h(
(x-m)^t \Sigma^{-1} (x-m)) ,$$
where $m\in\R^d$, $\Sigma$ is a positive definite symmetric matrix and
$h$ is a given function from 
$[0,+\infty)$ to  $[0,+\infty)$. Gaussian distributions are an
example, with $h(t)=\exp(-t/2)$. Generalized Gaussian distributions are obtained with
$h(t)=\exp(-t^\beta)$, with $\beta$ not necessarily equal to
$1$. T-distributions are also in this family, with $h(t)=(1+t/\nu)^{-(\nu+d)/2}$, etc.
Thanks to their elliptical contoured property, the $W_2$ distance
between two elements in such a family (i.e. $h$ fixed) can be
explicitely computed (see Gelbrich \cite{Gelbrich1990}), and yields a
formula that is the same as the one in the Gaussian case (Equation
\eqref{eq:wasserstein_gaussian}). 
In such a family, the identifiability property can be checked, using
the asymptotic behavior in all directions of $\R^d$.
Now, if we want the marginal consistency property to be also satisfied
(which is necessary if we want the coupling restriction problem to be
well-defined), the choice of $h$ is very limited. Indeed, Kano in
\cite{Kano1994}, proved that the only elliptical distributions with
the marginal consistency property are the ones which are a scale mixture of normal
distributions with a mixing variable that is unrelated to the
dimension $d$. So, generalized Gaussian distributions don't satisfy
this marginal consistency property, but T-distributions do.

\section{Multi-marginal formulation and barycenters}
\label{sec:multimarginal}

\subsection{Multi-marginal formulation for $MW_2$}

Let $\mu_0,\mu_1\dots,\mu_{J-1}$ be $J$ Gaussian mixtures on
$\R^d$, and let $\lambda_0,\dots\lambda_{J-1}$ be $J$ positive weights
summing to $1$. The multi-marginal  version of our optimal transport
problem restricted to Gaussian mixture models can be written 

{ \begin{equation}
  \label{eq:multi-marginal_MW2}
 MMW_2^2(\mu_0,\dots,\mu_{J-1}):=\hspace{-0.2em}\inf_{\gamma \in \Pi(\mu_0,\dots,\mu_{J-1})\cap GMM_{Jd}(\infty)} \int_{\R^{dJ}}c(x_0,\dots,x_{J-1})d\gamma(x_0,\dots,x_{J-1}),
\end{equation}}
where 
\begin{equation}
c(x_0,\dots,x_{J-1}) = \sum_{i=0}^{J-1} \lambda_i\|x_i - B(x)\|^2 = \frac{1}{2} \sum_{i,j=0}^{J-1} \lambda_i\lambda_j\|x_i - x_j\|^2
\label{eq:cost_multimarginal}
\end{equation}
and where $\Pi(\mu_0,\mu_1,\dots,\mu_{J-1})$ is the set of probability
measures on $(\R^d)^{J}$ having $\mu_0$, $\mu_1$, $\dots$, $\mu_{J-1}$ as
marginals. 

Writing for every $j$, $\mu_{j} = \sum_{k=1}^{K_j} \pi_j^k\mu_j^k$,
and 
using exactly the same arguments as in Proposition~\ref{prop:MW2}, we
can easily show the following result. 

\begin{prop}
\label{prop:MMW2}
The optimisation problem~\eqref{eq:multi-marginal_MW2} can be rewritten under the discrete form
{\small \begin{equation}
  \label{eq:discrete_multi-marginal}
  MMW_2^2(\mu_0,\dots,\mu_{J-1})=\min_{w \in \Pi(\pi_0,\dots,\pi_{J-1})} \sum_{k_0,\dots,k_{J-1}=1}^{K_0,\dots,K_{J-1}}w_{k_0\dots k_{J-1}}mmW_2^2(\mu_0^{k_0},\dots,\mu_{J-1}^{k_{J-1}}),
\end{equation}}
where $\Pi(\pi_0,\pi_1,\dots,\pi_{J-1})$ is the subset of tensors $w$ in $ \MM_{K_0,K_1,\dots,K_{J-1}}(\R^+)$ having
$\pi_0$, $\pi_1$, $\dots$, $\pi_{J-1}$ as discrete marginals, {\it i.e.} such that
\begin{equation}
  \label{eq:Pi_multi}
 \forall j\in\{0,\dots,J-1\}, \;\forall k \in
  \{1,\dots,K_j\},\sum_{\substack{1\leq k_0\leq K_0 \\\dots\\1\leq
  k_{j-1}\leq K_{j-1}\\k_j = k\\1\leq k_{j+1}\leq K_{j+1}\\\dots \\1\leq
  k_{J-1}\leq K_{J-1}}} w_{k_0k_1\dots k_{J-1}} = \pi_j^k.
\end{equation}
  Moreover, the  solution $\gamma^*$ of~\eqref{eq:multi-marginal_MW2} can be written
\begin{equation}
\gamma^* = \sum_{\substack{1\leq k_0 \leq K_0\\ \dots \\1\leq k_{J-1}
    \leq K_{J-1}}} w^*_{k_0 k_1\dots
  k_{J-1}}\gamma^*_{k_0 k_1\dots k_{J-1}},
\end{equation}
where $w^*$ is solution of~\eqref{eq:discrete_multi-marginal}
and $\gamma^*_{k_0 k_1\dots k_{J-1}}$ is the optimal multi-marginal plan between the
Gaussian measures $\mu_0^{k_0},\dots,\mu_{J-1}^{k_{J-1}}$ (see Section~\ref{sec:multimarginal_gaussian}).
\end{prop}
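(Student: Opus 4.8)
The plan is to mirror the two-sided argument used in the proof of Proposition~\ref{prop:MW2}, replacing bivariate couplings by $J$-fold couplings and the pairwise Wasserstein cost by the multi-marginal cost $mmW_2$ defined in~\eqref{eq:W2_multimarge_definition}.

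For the upper bound, I would start from a minimizer $w^*$ of the discrete program~\eqref{eq:discrete_multi-marginal}. For every multi-index $(k_0,\dots,k_{J-1})$, let $\gamma^*_{k_0\dots k_{J-1}}$ be the optimal multi-marginal plan between the Gaussian components $\mu_0^{k_0},\dots,\mu_{J-1}^{k_{J-1}}$, whose explicit degenerate-Gaussian form is recalled in Section~\ref{sec:multimarginal_gaussian} via~\eqref{eq:gamma_optimal_multimarge}. Setting $\gamma^* = \sum_{k_0,\dots,k_{J-1}} w^*_{k_0\dots k_{J-1}}\gamma^*_{k_0\dots k_{J-1}}$ produces a Gaussian mixture on $\R^{Jd}$, and the constraint $w^* \in \Pi(\pi_0,\dots,\pi_{J-1})$ guarantees that each of its $J$ marginals equals the corresponding $\mu_j$, so $\gamma^* \in \Pi(\mu_0,\dots,\mu_{J-1}) \cap GMM_{Jd}(\infty)$. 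Evaluating the cost~\eqref{eq:cost_multimarginal} against $\gamma^*$ componentwise then yields $MMW_2^2(\mu_0,\dots,\mu_{J-1}) \le \sum_{k_0,\dots,k_{J-1}} w^*_{k_0\dots k_{J-1}}\, mmW_2^2(\mu_0^{k_0},\dots,\mu_{J-1}^{k_{J-1}})$, the discrete minimum.

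For the reverse inequality, I would take an arbitrary $\gamma \in \Pi(\mu_0,\dots,\mu_{J-1}) \cap GMM_{Jd}(\infty)$ and write it compactly as a finite mixture $\gamma = \sum_j w_j \gamma_j$ of distinct Gaussian components on $\R^{Jd}$. The crucial observation is marginal consistency: for each coordinate block $i$, the push-forward $(\rmP_i)\#\gamma_j$ of a single Gaussian is again a single Gaussian on $\R^d$. Imposing that the $i$-th marginal of $\gamma$ equals $\mu_i = \sum_k \pi_i^k \mu_i^k$ and invoking the identifiability property (Proposition~\ref{lemma:unicity_gaussian}) forces $(\rmP_i)\#\gamma_j$ to coincide with one of the components $\mu_i^{k_i}$. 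Doing this simultaneously for all $i \in \{0,\dots,J-1\}$ assigns to each $\gamma_j$ a well-defined multi-index $(k_0,\dots,k_{J-1})$ and shows $\gamma_j \in \Pi(\mu_0^{k_0},\dots,\mu_{J-1}^{k_{J-1}})$; regrouping gives $\gamma = \sum_{k_0,\dots,k_{J-1}} w_{k_0\dots k_{J-1}}\gamma_{k_0\dots k_{J-1}}$ with marginal weights $w \in \Pi(\pi_0,\dots,\pi_{J-1})$. Since each $\gamma_{k_0\dots k_{J-1}}$ is an admissible multi-marginal coupling, its cost is at least $mmW_2^2(\mu_0^{k_0},\dots,\mu_{J-1}^{k_{J-1}})$ by definition, so that $\int c\, d\gamma \ge \sum_{k_0,\dots,k_{J-1}} w_{k_0\dots k_{J-1}}\, mmW_2^2(\cdots)$, which is in turn at least the discrete minimum. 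Taking the infimum over $\gamma$ closes the loop, and the displayed form of $\gamma^*$ follows from the upper-bound construction.

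The step I expect to be the main obstacle is the simultaneous application of identifiability across all $J$ marginals: one must check that grouping the compactly-written joint components $\gamma_j$ by the $J$-tuple of their marginal labels is consistent, i.e. that a single joint Gaussian component cannot have a marginal that splits into several $\mu_i^k$, which is exactly what marginal consistency of Gaussians guarantees. Care is also needed because distinct joint components may share the same marginal tuple; this merely accumulates into the weight $w_{k_0\dots k_{J-1}}$ and is harmless. Everything else, namely the cost decomposition~\eqref{eq:cost_multimarginal}, the lower bound by $mmW_2$, and the verification of the marginal constraints on $w$, is a routine transcription of the bivariate argument in Proposition~\ref{prop:MW2}.
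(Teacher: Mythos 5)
Your proposal is correct and follows exactly the route the paper intends: the paper gives no separate proof for this proposition, stating only that it follows ``using exactly the same arguments as in Proposition~\ref{prop:MW2}'', and your two-sided argument (upper bound via the mixture of optimal Gaussian multi-marginal plans built from a discrete minimizer, lower bound via marginal consistency of Gaussians plus the identifiability property applied to each of the $J$ marginals) is precisely that transcription. The subtleties you flag --- that a joint Gaussian component has a single Gaussian as each marginal, and that several joint components may share a marginal tuple and simply accumulate into $w_{k_0\dots k_{J-1}}$ --- are the right ones and are handled correctly.
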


From Section~\ref{sec:multimarginal_gaussian}, we know how to construct the optimal multi-marginal plans  $\gamma^*_{k_0 k_1\dots k_{J-1}}$, which means that computing a solution for~\eqref{eq:multi-marginal_MW2} boils down to solve the linear program~\eqref{eq:discrete_multi-marginal} in order to find $w^*$. 

\subsection{Link with the $MW_2$-barycenters}
We will now show the link between the previous multi-marginal problem and the barycenters for $MW_2$. 
\begin{prop}
  The barycenter problem
  \begin{equation}
    \label{eq:MW2_bary}
    \inf_{\nu \in GMM_d(\infty)} \sum_{j=0}^{J-1} \lambda_j MW_2^2(\mu_j,\nu),
  \end{equation}
has a solution given by $\nu^* = B\# \gamma^*$, where $\gamma^*$ is an
optimal plan for the multi-marginal problem~\eqref{eq:multi-marginal_MW2}.    
\end{prop}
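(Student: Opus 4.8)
The plan is to transpose the Agueh--Carlier correspondence between barycenters and the multi-marginal problem \cite{agueh2011barycenters} to the GMM-restricted setting, keeping track at each step of the fact that the measures involved stay Gaussian mixtures. Concretely I would prove the identity
\[\inf_{\nu \in GMM_d(\infty)} \sum_{j=0}^{J-1}\lambda_j MW_2^2(\mu_j,\nu) = MMW_2^2(\mu_0,\dots,\mu_{J-1}),\]
and check that $\nu^\ast = B\#\gamma^\ast$ attains the infimum. The first (easy) point is admissibility: since $\gamma^\ast \in GMM_{Jd}(\infty)$ by Proposition~\ref{prop:MMW2} and the barycenter map $B(x)=\sum_i \lambda_i x_i$ is affine, the pushforward $\nu^\ast = B\#\gamma^\ast$ is itself a Gaussian mixture, so $\nu^\ast \in GMM_d(\infty)$.

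For the inequality $\sum_j \lambda_j MW_2^2(\mu_j,\nu^\ast) \le MMW_2^2$, I would, for each $j$, push $\gamma^\ast$ forward by the affine map $(x_0,\dots,x_{J-1})\mapsto (x_j, B(x))$ to produce a plan $\tilde\gamma_j$ with marginals $\mu_j$ and $\nu^\ast$; being the affine image of a GMM it lies in $\Pi(\mu_j,\nu^\ast)\cap GMM_{2d}(\infty)$ and is thus admissible for $MW_2(\mu_j,\nu^\ast)$. Hence $MW_2^2(\mu_j,\nu^\ast) \le \int \|x_j - B(x)\|^2 \, d\gamma^\ast$, and summing with the weights $\lambda_j$ while using the pointwise identity $\sum_j \lambda_j \|x_j - B(x)\|^2 = c(x)$ from~\eqref{eq:cost_multimarginal} together with the optimality of $\gamma^\ast$ gives $\sum_j \lambda_j MW_2^2(\mu_j,\nu^\ast) \le \int c\, d\gamma^\ast = MMW_2^2$.

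For the reverse inequality, fix any competitor $\nu \in GMM_d(\infty)$ and take optimal $MW_2$-couplings $\gamma_j \in \Pi(\mu_j,\nu)\cap GMM_{2d}(\infty)$. I would glue them along their common marginal $\nu$ exactly as in the proof of Proposition~\ref{prop:metric}: because each $\gamma_j$ and $\nu$ are Gaussian mixtures, the component-wise gluing produces a measure $\hat\gamma$ on $\R^{(J+1)d}$ that is again a Gaussian mixture, has $(x_j, y)$-marginal equal to $\gamma_j$, and whose projection $\gamma$ onto the first $J$ coordinates is therefore admissible for~\eqref{eq:multi-marginal_MW2}. Writing $y$ for the last ($\nu$-)coordinate and using that $B(x)$ minimizes $y\mapsto \sum_j \lambda_j \|x_j - y\|^2$, I would bound
\[\sum_{j=0}^{J-1}\lambda_j MW_2^2(\mu_j,\nu) = \int \sum_{j=0}^{J-1}\lambda_j \|x_j - y\|^2\, d\hat\gamma \ge \int c(x)\, d\gamma \ge MMW_2^2.\]
Combining the two inequalities identifies the common value with $MMW_2^2$ and shows that $\nu^\ast$ is a solution of~\eqref{eq:MW2_bary}.

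The only genuinely delicate step is the GMM-closure in the gluing: I must invoke (and note that it extends verbatim from the two-marginal case treated in Proposition~\ref{prop:metric} to the $J$-fold case) that gluing the optimal $MW_2$ plans along the shared marginal $\nu$ yields a finite Gaussian mixture. This works because each optimal plan $\gamma_j$ decomposes, via Proposition~\ref{prop:MW2}, into degenerate Gaussian components supported on affine graphs over the components of $\nu$; gluing one such component per marginal amounts to pushing a single Gaussian component of $\nu$ through an affine map into $\R^{(J+1)d}$, which stays Gaussian. Everything else reduces to affine-pushforward stability of GMMs and the elementary minimization property of the Euclidean barycenter $B$.
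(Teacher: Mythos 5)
Your first inequality is exactly the paper's argument: push $\gamma^\ast$ forward by the linear map $(P_j,B)$ to get admissible GMM couplings of $(\mu_j,\nu^\ast)$ and sum. For the reverse inequality your strategy is the same as the paper's in substance but phrased at the level of measures rather than weights: the paper never glues the continuous plans at all. Instead it takes the optimal discrete plans $w^j\in\Pi(\pi_j,\pi_\nu)$, forms the tensor $\alpha_{k_0\dots k_{J-1}l}=\prod_j w^j_{k_j,l}/(\pi_\nu^l)^{J-1}$, and invokes the Gaussian inequality $\sum_j\lambda_j W_2^2(\mu_j^{k_j},\nu^l)\geq mmW_2^2(\mu_0^{k_0},\dots,\mu_{J-1}^{k_{J-1}})$ before summing out $l$. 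Your continuous version, which uses only the pointwise minimality of $B$, is conceptually cleaner in that it avoids citing the Gaussian multi-marginal identity as a black box.

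The gap is in the step you yourself flag as delicate. The $J$-fold disintegration gluing
$d\hat\gamma=d\nu(y)\prod_j d\gamma_j(x_j|y)$ does \emph{not} ``extend verbatim'' to give a Gaussian mixture: the conditional $d\gamma_j(\cdot|y)$ of a GMM is a mixture whose weights are ratios of Gaussian densities in $y$, so the product over $j$ produces terms with $y$-dependence of the form $\prod_j g_{\nu^{l_j}}(y)\big/\bigl(\sum_{l}\pi_\nu^{l}g_{\nu^{l}}(y)\bigr)^{J-1}$, including cross terms with $l_j\neq l_{j'}$, and this is not a GMM. What does work is the \emph{component-wise} gluing you allude to, but it must be defined: one glues, for each fixed $\nu$-component $\nu^l$, the Gaussian plans $\gamma^j_{k_j,l}$ along their common Gaussian marginal $\nu^l$ (this stays Gaussian), and assigns to the resulting component the weight $\alpha_{k_0\dots k_{J-1}l}=\prod_j w^j_{k_j,l}/(\pi_\nu^l)^{J-1}$; one then checks that the $(x_j,y)$-marginal of the resulting mixture is indeed $\gamma_j$, which uses $\sum_{k_i}w^i_{k_i,l}=\pi_\nu^l$. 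This weight tensor is precisely the $\alpha$ the paper constructs, so once you supply it your proof closes and coincides with the paper's; without it, the assertion that $\hat\gamma$ is a GMM with the prescribed two-dimensional marginals is unjustified.
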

\begin{proof}
For any $\gamma \in \Pi(\mu_0,\dots,\mu_{J-1})\cap GMM_{Jd}(\infty)$,  we define $\gamma_j = (P_j,B)\# \gamma$, with $B$ the barycenter application defined in \eqref{eq:barycenter} and $P_j:(\R^d)^J\mapsto \R^d$ such that $P(x_0,\dots,x_{J-1}) = x_j$. Observe that $\gamma_j$ belongs to $\Pi(\mu_j,\nu)$ with $\nu = B\#\gamma$. The probability measure $\gamma_j$ also belongs to $GMM_{2d}(\infty)$ since $(P_j,B)$ is a linear application. It follows that
\begin{align*}
\int_{(\R^d)^J}   \sum_{j=0}^{J-1} \lambda_j \|x_j - B(x)\|^2 d\gamma(x_0,\dots,x_{J-1})  &= \sum_{j=0}^{J-1} \lambda_j \int_{(\R^d)^J}   \|x_j - B(x)\|^2 d\gamma(x_0,\dots,x_{J-1})  \\
&= \sum_{j=0}^{J-1} \lambda_j \int_{\R^d\times \R^d}   \|x_j - y\|^2 d\gamma_j(x_j,y) \\
&\ge \sum_{j=0}^{J-1} \lambda_j MW_2^2(\mu_j,\nu).
\end{align*}
This inequality holds for any arbitrary $\gamma \in \Pi(\mu_0,\dots,\mu_{J-1})\cap GMM_{Jd}(\infty)$, thus 
\[MMW_2^2(\mu_0,\dots,\mu_{J-1}) \ge \inf_{\nu \in GMM_d(\infty)} \sum_{j=0}^{J-1} \lambda_j MW_2^2(\mu_j,\nu).\]

Conversely, for any $\nu$ in $GMM_{d}(\infty)$, we can write $\nu = \sum_{l=1}^L \pi_{\nu}^l \nu^l$, the $\nu^l$ being Gaussian probability measures. We also write $\mu_j = \sum_{k=1}^{K_j} \pi_j^k\mu_j^k$, and we call $w^j$ the optimal discrete plan for $MW_2$  between the mixtures $\mu_j$ and $\nu$ (see Equation~\eqref{eq:DW_conj}). Then, 
\begin{align*}
\sum_{j=0}^{J-1} \lambda_j MW_2^2(\mu_j,\nu) &= \sum_{j=0}^{J-1} \lambda_j \sum_{k,l} w_{k,l}^j W_2^2(\mu_j^k,\nu^l).
\end{align*}
Now, if we define a $K_0\times \dots \times K_{J-1}\times L$ tensor $\alpha$ and a $K_0\times \dots \times K_{J-1}$ tensor $\overline{\alpha}$ by
\[\alpha_{k_0\dots k_{J-1} l} = \frac{\prod_{j=0}^{J-1} w_{k_j,l}^j}{(\pi_{\nu}^l)^{J-1}}\;\; \text{ and }\quad \overline{\alpha}_{k_0\dots k_{J-1}} = \sum_{l=1}^L \alpha_{k_0\dots k_{J-1} l},\]
clearly $\alpha \in \Pi(\pi_0,\dots,\pi_{J-1},\pi_{\nu})$ and
$\overline{\alpha} \in \Pi(\pi_0,\dots,\pi_{J-1})$. 
Moreover,
\begin{align*}
\sum_{j=0}^{J-1} \lambda_j MW_2^2(\mu_j,\nu) & = \sum_{j=0}^{J-1} \lambda_j \sum_{k_j=1}^{K_j}\sum_{l=1}^L w_{k_j,l}^j W_2^2(\mu_j^{k_j},\nu^l)\\
&= \sum_{j=0}^{J-1} \lambda_j \sum_{k_0,\dots,k_{J-1},l} \alpha_{k_0\dots k_{J-1} l} W_2^2(\mu_j^{k_j},\nu^l)\\
&=  \sum_{k_0,\dots,k_{J-1},l} \alpha_{k_0\dots k_{J-1} l} \sum_{j=0}^{J-1} \lambda_j W_2^2(\mu_j^{k_j},\nu^l)\\
& \ge  \sum_{k_0,\dots,k_{J-1},l} \alpha_{k_0\dots k_{J-1} l} mmW_2^2(\mu_0^{k_0},\dots,\mu_{J-1}^{k_{J-1}}) \quad \text{ (see Equation~\eqref{eq:MW2_bary})}\\
& = \sum_{k_0,\dots,k_{J-1}} \overline{\alpha}_{k_0\dots k_{J-1}}
  mmW_2^2(\mu_0^{k_0},\dots,\mu_{J-1}^{k_{J-1}}) \ge MMW_2^2(\mu_0,\dots,\mu_{J-1}),
\end{align*}
the last inequality being a consequence of Proposition~\ref{prop:MMW2}.
Since this holds for any arbitrary $\nu$ in $GMM_{d}(\infty)$, this ends the proof.
\end{proof}

The following corollary gives a more explicit formulation for the barycenters for $MW_2$, and shows that the number of Gaussian components in the mixture is much smaller than $\prod_{j=0}^{J-1} K_j$. 

\begin{Corollary}
 Let $\mu_0,\dots,\mu_{J-1}$ be $J$ Gaussian mixtures such that all
 the involved covariance matrices are positive definite, then the solution of~\eqref{eq:MW2_bary} can be written 
 \begin{equation}
   \label{eq:barycenterMW2}
   \nu = \sum_{k_0,\dots,k_{J-1}} w^*_{k_0\dots k_{J-1}} \nu_{k_0 \dots k_{J-1}}
 \end{equation}
where $\nu_{k_0\dots k_{J-1}}$ is the Gaussian barycenter for $W_2$
between the components $\mu_0^{k_0},\dots, \mu_{J-1}^{k_{J-1}}$, and
$w^*$ is the optimal solution of~\eqref{eq:discrete_multi-marginal}.   
Moreover, this barycenter has less than $K_0+\dots+K_{J-1}-J+1$
non-zero coefficients. 
\end{Corollary}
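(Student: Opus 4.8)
The plan is to derive this corollary by stitching together the two results that immediately precede it, together with the Gaussian barycenter facts recalled in Section~\ref{sec:multimarginal_gaussian}. First I would invoke the previous proposition, which asserts that a solution of the barycenter problem~\eqref{eq:MW2_bary} is $\nu^* = B\#\gamma^*$, where $\gamma^*$ is an optimal plan for the multi-marginal problem~\eqref{eq:multi-marginal_MW2}. By Proposition~\ref{prop:MMW2}, such a $\gamma^*$ admits the explicit decomposition $\gamma^* = \sum_{k_0,\dots,k_{J-1}} w^*_{k_0\dots k_{J-1}}\gamma^*_{k_0\dots k_{J-1}}$, with $w^*$ optimal for the discrete linear program~\eqref{eq:discrete_multi-marginal} and each $\gamma^*_{k_0\dots k_{J-1}}$ the optimal multi-marginal plan between the Gaussian components $\mu_0^{k_0},\dots,\mu_{J-1}^{k_{J-1}}$. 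Since push-forward by the fixed linear map $B$ is linear in the measure, it distributes over this finite convex combination, yielding $\nu^* = \sum_{k_0,\dots,k_{J-1}} w^*_{k_0\dots k_{J-1}}\,(B\#\gamma^*_{k_0\dots k_{J-1}})$.

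The next step is to identify each $B\#\gamma^*_{k_0\dots k_{J-1}}$ with the announced Gaussian barycenter $\nu_{k_0\dots k_{J-1}}$. This is precisely the Agueh--Carlier correspondence between multi-marginal plans and barycenters, recalled in the all-Gaussian setting of Section~\ref{sec:multimarginal_gaussian}: because $\gamma^*_{k_0\dots k_{J-1}}$ solves the multi-marginal problem for the Gaussians $\mu_0^{k_0},\dots,\mu_{J-1}^{k_{J-1}}$, its image $B\#\gamma^*_{k_0\dots k_{J-1}}$ solves the corresponding $W_2$-barycenter problem, and since all covariances are positive definite this barycenter is the Gaussian $\NN(m_*,\Sigma_*)$ with $m_*$ the weighted mean of the means and $\Sigma_*$ the solution of the fixed-point equation~\eqref{eq:fixed}. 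Substituting into the display above gives exactly formula~\eqref{eq:barycenterMW2}.

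It then remains to bound the number of non-zero coefficients, and here I would argue entirely at the level of the linear program~\eqref{eq:discrete_multi-marginal}. The feasible set $\Pi(\pi_0,\dots,\pi_{J-1})$ is a bounded polytope cut out by the marginal constraints~\eqref{eq:Pi_multi}, so the infimum of the linear objective is attained at a vertex; it therefore suffices to bound the number of non-zero entries of a vertex. The constraints number $K_0+\dots+K_{J-1}$, but they are not independent: for each $j$ the associated constraints sum to the same total mass $\sum w = 1$, so the $J$ identities ``total mass $=1$'' coincide and create exactly $J-1$ redundancies, leaving a constraint matrix of rank $K_0+\dots+K_{J-1}-(J-1)$. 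By the standard extreme-point bound in linear programming, a vertex has at most that many non-zero coordinates, i.e. at most $K_0+\dots+K_{J-1}-J+1$; choosing $w^*$ to be such a vertex bounds the number of terms in~\eqref{eq:barycenterMW2}. I expect the only delicate point to be this rank computation, namely verifying that the redundancies among the marginal constraints are exactly $J-1$ and no more; this is a small combinatorial argument on the block structure of the constraint matrix, entirely analogous to the classical $K_0+K_1-1$ bound for bipartite transport plans recalled after Proposition~\ref{prop:MW2}.
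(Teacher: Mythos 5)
Your argument is correct and takes essentially the same route as the paper, whose proof is a one-liner stating that the decomposition follows from the two preceding propositions and that the linear program \eqref{eq:discrete_multi-marginal} has $K_0+\dots+K_{J-1}-J+1$ affine constraints, hence admits a vertex solution with at most that many non-zero entries; your write-up merely makes explicit the push-forward computation $B\#\gamma^* = \sum w^*\,(B\#\gamma^*_{k_0\dots k_{J-1}})$ and the identification of each term with the Gaussian barycenter. One minor remark: the ``delicate point'' you flag is not actually needed in the direction you worry about, since the extreme-point bound only requires the rank of the constraint matrix to be \emph{at most} $K_0+\dots+K_{J-1}-J+1$, i.e.\ at least $J-1$ redundancies, which is immediate from the fact that each of the $J$ blocks of marginal constraints sums to the total mass.
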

\begin{proof}
This follows directly from the proof of the previous propositions. The linear program~\eqref{eq:discrete_multi-marginal} has  $K_0+\dots+K_{J-1}-J+1$ affine constraints, and thus must have at least a solution with  less than $K_0+\dots+K_{J-1}-J+1$ components.
\end{proof}

To conclude this section, it is important to emphasize that the problem of barycenters for the distance $MW_2$, as defined in~\eqref{eq:MW2_bary}, is completely different from 
 \begin{equation}
    \label{eq:GMM_W2_bary}
    \inf_{\nu \in GMM_d(\infty)} \sum_{j=0}^{J-1} \lambda_j W_2^2(\mu_j,\nu).
  \end{equation}
Indeed, since $GMM_d(\infty)$ is dense in $\PP_2(\R^d)$ and the total
cost on the right is continuous on $\PP_2(\R^d)$, the infimum
in~\eqref{eq:GMM_W2_bary} is exactly the same as the infimum over
$\PP_2(\R^d)$. Even if the barycenter for $W_2$ is not a mixture
itself, it can be approximated by a sequence of  Gaussian mixtures
with any desired precision. Of course, these mixtures might have a
very high number of components in practice.

\subsection{Some examples}

The previous propositions give us a very simple way to compute
barycenters between Gaussian mixtures for the metric $MW_2$. For given
mixtures $\mu_0,\dots,\mu_{J-1}$,  we first compute all the values
$mmW_2(\mu_0^{k_0},\dots ,\mu_{J-1}^{k_{J-1}})$ between their
components (and these values can be computed iteratively, see
Section~\ref{sec:multimarginal_gaussian}) and the corresponding
Gaussian barycenters $\nu_{k_0\dots k_{J-1}}$. Then we solve the
linear program~\eqref{eq:discrete_multi-marginal} to find $w^*$.

Figure~\ref{bary_4GMM} shows the barycenters between the following simple two dimensional mixtures
{\small
  \begin{align*}
\mu_0 =& \frac 1 3 \NN\left(
  \begin{pmatrix}
    0.5\\0.75
  \end{pmatrix}
,0.025 
 \begin{pmatrix}
    0.1 & 0 \\0 & 0.05
  \end{pmatrix}\right)
+ \frac 1 3 \NN\left(
  \begin{pmatrix}
    0.5\\0.25
  \end{pmatrix}
,0.025 
 \begin{pmatrix}
    0.1 & 0 \\0 & 0.05
  \end{pmatrix}\right)\\&+\frac 1 3 \NN\left(
  \begin{pmatrix}
    0.5\\0.5
  \end{pmatrix}
,0.025 
 \begin{pmatrix}
    0.06 & 0 \\0.05 & 0.05
  \end{pmatrix}\right),\\
 \mu_1 =& \frac 1 4 \NN\left(
  \begin{pmatrix}
    0.25\\0.25
  \end{pmatrix}
,0.01 I_2 \right)
+ \frac 1 4 \NN\left(
  \begin{pmatrix}
    0.75\\0.75
  \end{pmatrix}
,0.01 I_2\right)
+\frac 1 4 \NN\left(
  \begin{pmatrix}
    0.7\\0.25
  \end{pmatrix}
,0.01 I_2\right)\\
&+\frac 1 4 \NN\left(
  \begin{pmatrix}
    0.25\\0.75
  \end{pmatrix}
,0.01 I_2\right), \\ 
\mu_2 =& \frac 1 4 \NN\left(
  \begin{pmatrix}
    0.5\\0.75
  \end{pmatrix}
,0.025 
 \begin{pmatrix}
    1 & 0 \\0 & 0.05
  \end{pmatrix}\right)
+ \frac 1 4 \NN\left(
  \begin{pmatrix}
    0.5\\0.25
  \end{pmatrix}
,0.025 
 \begin{pmatrix}
    1 & 0 \\0 & 0.05
  \end{pmatrix}\right)\\
&+\frac 1 4 \NN\left(
  \begin{pmatrix}
    0.25\\0.5
  \end{pmatrix}
,0.025 
 \begin{pmatrix}
    0.05 & 0 \\0 & 1
  \end{pmatrix}\right)+\frac 1 4 \NN\left(
  \begin{pmatrix}
    0.75\\0.5
  \end{pmatrix}
,0.025 
 \begin{pmatrix}
    0.05 & 0 \\0 & 1
  \end{pmatrix}\right),\\
\mu_3 =& \frac 1 3 \NN\left(
  \begin{pmatrix}
    0.8\\0.7
  \end{pmatrix}
,0.01 
 \begin{pmatrix}
    2 & 0 \\1 & 1
  \end{pmatrix}\right)
+ \frac 1 3 \NN\left(
  \begin{pmatrix}
    0.2\\0.7
  \end{pmatrix}
,0.01 
 \begin{pmatrix}
    2 & 0 \\-1 & 1
  \end{pmatrix}\right)\\
&+\frac 1 3 \NN\left(
  \begin{pmatrix}
    0.5\\0.3
  \end{pmatrix}
,0.01 
 \begin{pmatrix}
    6 & 0 \\0 & 1
  \end{pmatrix}\right),
 \end{align*} 
}
where $I_2$ is the $2\times 2$ identity matrix. Each barycenter is a mixture of at most $K_0+K_1+K_2+K_3 -4 + 1 = 11$ components. By thresholding the mixtures densities, this yields barycenters between 2-D shapes.

\begin{figure}[h]
  \centering
  \includegraphics[width=6cm]{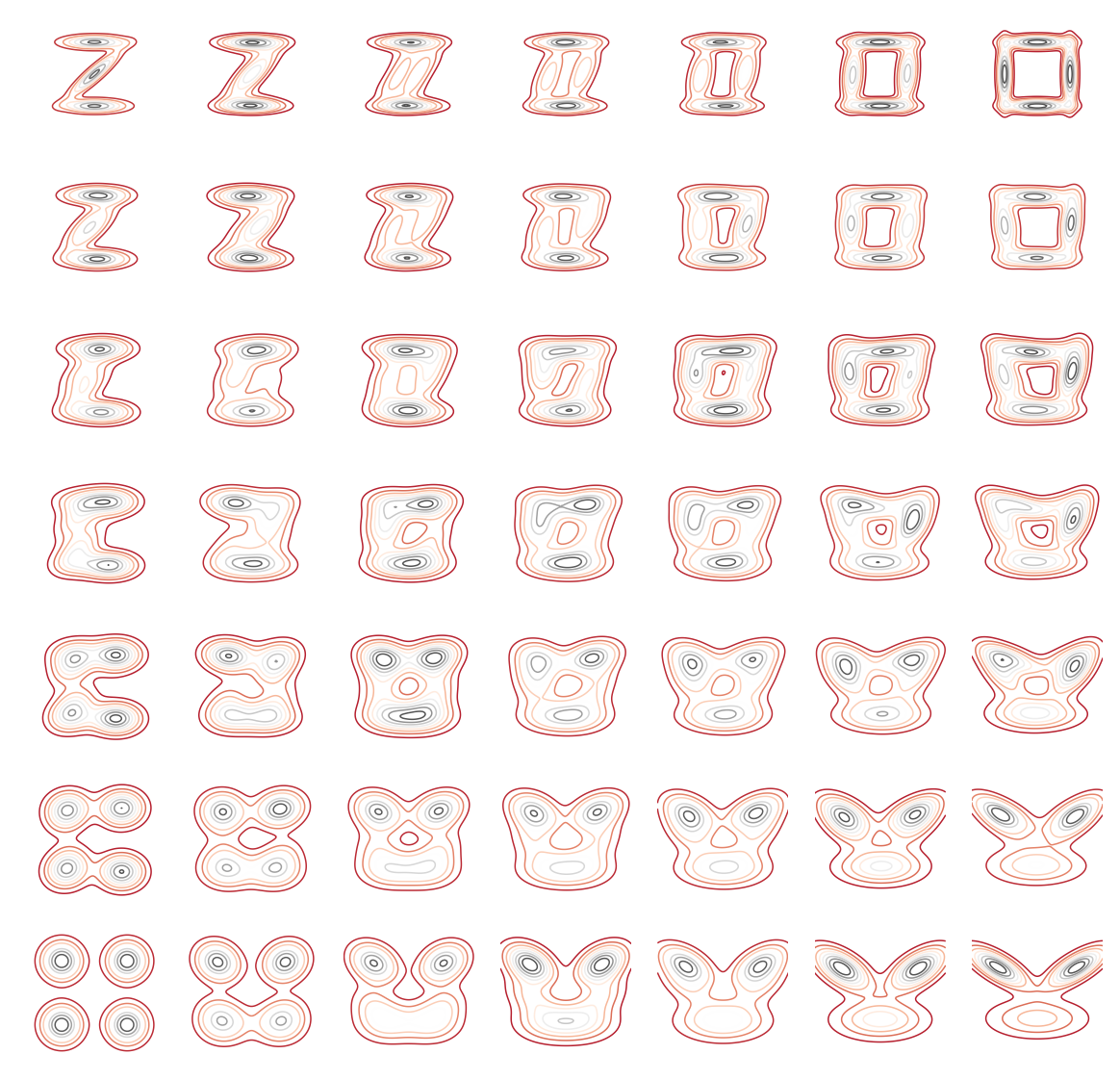}
  \includegraphics[width=6cm]{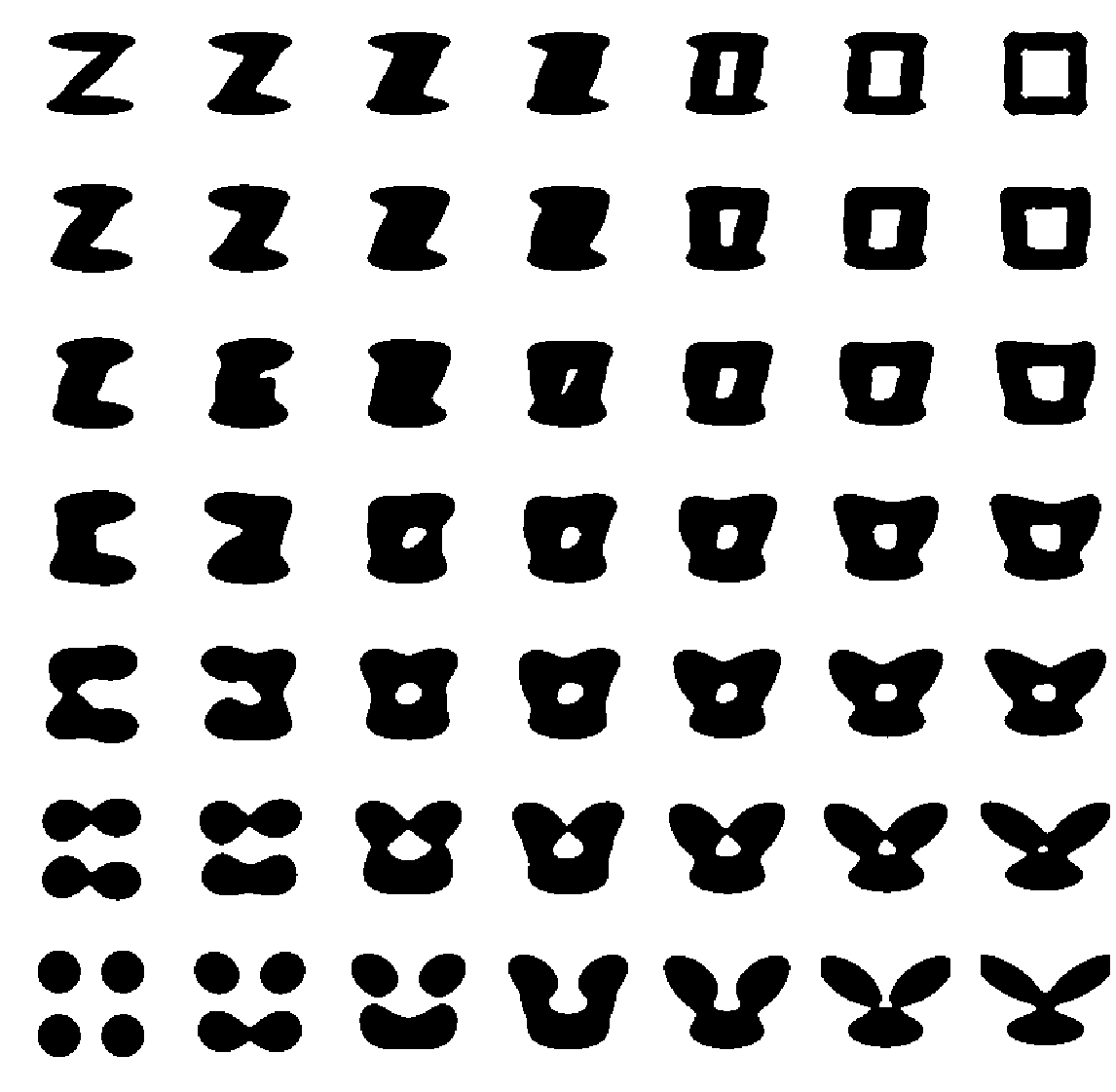}
  \caption{$MW_2$-barycenters between 4 Gaussian mixtures $\mu_0$, $\mu_1$, $\mu_2$ and $\mu_3$. On the left, some level sets of the distributions are displayed. On the right,  densities thresholded at level $1$ are displayed. We use bilinear weights with respect to the four corners of the square. 
\label{bary_4GMM}}
\end{figure}

To go further, Figure~\ref{bary_12GMM} shows barycenters where more
involved shapes have been approximated by mixtures of 12 Gaussian
components each. Observe that,  even if some of the original shapes (the star, the cross) have symmetries, these symmetries are not necessarily respected by the estimated GMM, and thus not preserved in the barycenters. This could be easily solved by imposing some symmetry in the GMM estimation for these shapes.   
\begin{figure}[h]
  \centering
  \includegraphics[width=12cm]{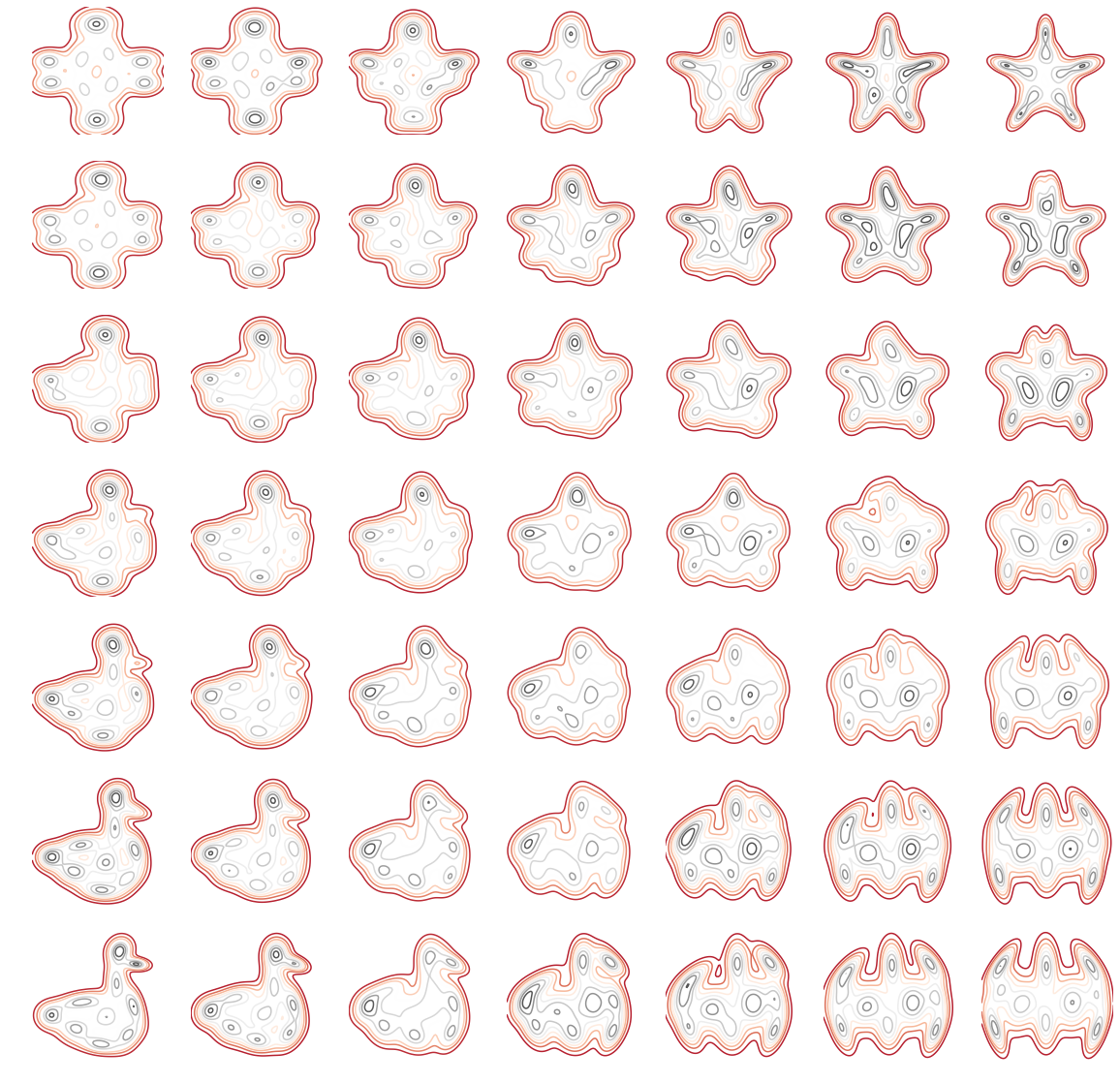}
  \caption{Barycenters between four mixtures of 12 Gaussian components, $\mu_0$, $\mu_1$, $\mu_2$, $\mu_3$ for the metric $MW_2$. The weights are bilinear with respect to the four corners of the square. 
\label{bary_12GMM}}
\end{figure}

\section{Using $MW_2$ in practice}
\label{sec:assignment}

\subsection{Extension to probability distributions that are not GMM}

Most applications of optimal transport involve data that do not follow a Gaussian mixture model and we can wonder how to make use of the distance $MW_2$ and the 
corresponding transport plans in this case. 
A simple solution is to approach these data by convenient Gaussian mixture models and to use the
transport plan $\gamma$ (or one of the maps defined in the previous
section) to displace the data.

Given two probability measures $\nu_0$ and $\nu_1$, we can define a pseudo-distance  $MW_{K,2}(\nu_0,\nu_1)$ as the distance
$MW_2({\mu}_0, {\mu}_1)$, where each ${\mu}_i$ ($i=0,1$)  is the Gaussian mixture model with $K$ components which minimizes an appropriate ``similarity measure'' to $\nu_i$. For instance, if   $\nu_i$ is a discrete measure $\nu_i = \frac 1 {J_i} \sum_{j=1}^{J_i} \delta_{x^i_j}$ in $\R^d$ , this similarity can be chosen as the opposite of the  log-likelihood of the
discrete set of points $\{x_j\}_{j=1,\dots, J_i}$ and the parameters of the Gaussian mixture can be infered thanks to the Expectation-Maximization
algorithm.  Observe that this log-likelihood can also be written
$$\E_{\nu_i }[\log {\mu}_i].$$
 
If $\nu_i$ is absolutely continuous, we can instead choose ${\mu}_i$ which minimizes $\KL(\nu_i,{\mu}_i )$ among GMM of order $K$. The discrete and continuous formulations coincide since  
$$\KL(\nu_i,{\mu}_i ) =  - H(\nu_i) - \E_{\nu_i }[\log {\mu}_i],$$
where $H(\nu_i)$ is the differential entropy of $\nu_i$. 

In both cases, the corresponding $MW_{K,2}$ does not define a distance since two different distributions may have the same corresponding Gaussian mixture. However, for $K$ large enough, their approximation by Gaussian mixtures will become different. The choice of $K$ must be a compromise between the quality of the approximation given by Gaussian mixture models and the affordable computing time. 
In any case, the optimal transport plan $\gamma_K$ involved in $MW_2({\mu}_0,{\mu}_1)$ can be used to compute an approximate transport map between $\nu_0$ and $\nu_1$.

In the experimental section, we will use this approximation for different data, generally with $K=10$.

\subsection{A similarity measure mixing $MW_2$ and $KL$}
\label{Extension:subsec}

In the previous paragraphs, we have seen how to use our Wasserstein-type distance $MW_2$ and its associated optimal transport plan on probability measures $\nu_0$ and $\nu_1$ that are not GMM.
Instead of a two step formulation (first an approximation by two GMM, and second the computation of $MW_2$), we propose here a relaxed formulation combining directly $MW_2$ with the Kullback-Leibler divergence.

Let $\nu_0$ and $\nu_1$ be two probability measures on $\R^d$, we
define
\begin{equation}
  \label{eq:EKl}
  E_{K,\lambda} (\nu_0,\nu_1)  = \min_{\gamma \in GMM_{2d}(K)}
 \int_{\R^d\times\R^d} \|y_0-y_1\|^2 d\gamma(y_0,y_1) -  \lambda\E_{\nu_0 }[\log P_0\#
  \gamma] -  \lambda\E_{\nu_1 }[\log P_1\#
  \gamma] ,
\end{equation}
where $\lambda>0$ is a parameter.

In the case where $\nu_0$ and $\nu_1$ are absolutely continuous with
respect to the Lebesgue measure, we can write instead
\begin{equation}
  \label{eq:EKltilde}
 \widetilde{E_{K,\lambda}}(\nu_0,\nu_1)  =\min_{\gamma \in GMM_{2d}(K)}
   \int_{\R^d\times\R^d} \|y_0-y_1\|^2 d\gamma(y_0,y_1) +
   \lambda \KL(\nu_0,P_0\#
  \gamma) +  \lambda \KL(\nu_1,P_1\#
  \gamma) 
\end{equation}
and $\widetilde{E_{K,\lambda}}(\nu_0,\nu_1) = E_{K,\lambda} (\nu_0,\nu_1) - \lambda H(\nu_0) -
\lambda H(\nu_1).$
Note that this formulation does not define a distance in
general.

This formulation is close to the unbalanced formulation of optimal
transport proposed by Chizat et al. in~\cite{Chizat2017ScalingAF}, with two differences:
a)  we constrain the solution $\gamma$ to be a GMM; and b) we use $\KL(\nu_0,P_0\#
  \gamma) $ instead of $\KL(P_0\#
  \gamma,\nu_0)$. In their case, the support of  $P_i\#
  \gamma$ must be contained in the support of $\nu_i$. When $\nu_i$ has a bounded support, this constraint is quite strong and would not make sense for a GMM $\gamma$.  

For discrete measures $\nu_0$ and $\nu_1$, when $\lambda$ goes to infinity, minimizing~\eqref{eq:EKl} becomes equivalent to approximate $\nu_0$
and $\nu_1$ by the EM algorithm and this only imposes the marginals of
$\gamma$ to be as close as possible to $\nu_0$ and $\nu_1$. When
$\lambda$ decreases, the first term favors solutions $\gamma$ whose
marginals become closer.  

Solving this problem (Equation \eqref{eq:EKl}) leads to computations similar to those
used in  the EM iterations~\cite{bishop2006pattern}. By differentiating with respect to the
weights, means and covariances of $\gamma$, we obtain equations which
are not in closed-form. For the sake of simplicity, we  illustrate
here what
happens in one dimension.  \\
Let $\gamma\in GMM_2(K)$ be a Gaussian mixture in dimension $2d=2$ with $K$
elements. We write 
$$\gamma = \sum_{k=1}^K \pi_k \mathcal{N}\left(
  \left( \begin{array}{c}  m_{0,k} \\ m_{1,k} \end{array} \right)
  , \left(\begin{array}{cc} 
     \sigma_{0,k}^2  & a_k \\ 
a_k & \sigma_{1,k}^2 \end{array} \right) \right) .$$
We have that the marginals are given by the 1d Gaussian mixtures
$$ P_0\# \gamma = \sum_{k=1}^K \pi_k \mathcal{N}(m_{0,k},
\sigma_{0,k}^2) \quad \text{ and } \quad  P_1\# \gamma = \sum_{k=1}^K \pi_k \mathcal{N}(m_{1,k}, \sigma_{1,k}^2) .$$

Then, to minimize, with respect to $\gamma$, the energy $E_{K,\lambda}(\nu_0,\nu_1)$ above, since
the $\KL$ terms are independent of the $a_k$, we can directly take
$a_k=\sigma_{0,k}\sigma_{1,k}$, and the transport cost term becomes
$$ \int_{\R^d\times\R^d} \|y_0-y_1\|^2 d\gamma(y_0,y_1)  =
\sum_{k=1}^K \pi_k \left[ (m_{0,k} - m_{1,k})^2 +  (\sigma_{0,k} -
  \sigma_{1,k})^2 \right] . $$
Therefore, we have to consider the problem of minimizing the following
``energy'':
\begin{eqnarray*}
F(\gamma) & = & \sum_{k=1}^K \pi_k \left[ (m_{0,k} - m_{1,k})^2 +  (\sigma_{0,k} -
  \sigma_{1,k})^2 \right]  \\
& & -  \lambda\int_\R \log\left( \sum_{k=1}^K \pi_k
  g_{m_{0,k}, \sigma_{0,k}^2}(x) \right)  d\nu_0(x) -  \lambda\int_\R \log\left( \sum_{k=1}^K \pi_k
  g_{m_{1,k}, \sigma_{1,k}^2}(x) \right)  d\nu_1(x) .
\end{eqnarray*}
It can be optimized through a simple gradient descent on the
parameters $\pi_k$, $m_{i,k}$, $\sigma_{i,k}$ for $i=0,1$ and
$k=1,\ldots, K$. Indeed a simple calculus shows that we can write
$$\frac{\partial F(\gamma)}{\partial \pi_k} = \left[ (m_{0,k} - m_{1,k})^2 +  (\sigma_{0,k} -
  \sigma_{1,k})^2 \right] - \lambda \frac{\tilde{\pi}_{0,k} +\tilde{\pi}_{1,k}
}{\pi_k} ,$$

 $$\frac{\partial F(\gamma)}{\partial m_{i,k}} =  2
 \pi_k(m_{i,k} - m_{i,k}) -  \lambda
 \frac{\tilde{\pi}_{i,k}}{\sigma_{i,k}^2}(\tilde{m}_{i,k} - m_{i,k} )
 ,$$ 

$$\text{ and } \quad \frac{\partial F(\gamma)}{\partial \sigma_{i,k}} = 2
 \pi_k(\sigma_{i,k} - \sigma_{j,k}) -  \lambda
 \frac{\tilde{\pi}_{i,k}}{\sigma_{i,k}^3}(\tilde{\sigma}_{i,k}^2 - \sigma_{i,k}^2 )
 ,$$  
where we have introduced some auxilary empirical estimates of the
variables given, for $i=0,1$ and $k=1,\ldots, K$, by
$$ \gamma_{i,k}(x) =  \frac{\pi_k g_{m_{i,k}, \sigma_{i,k}^2}(x)}{\sum_{l=1}^K \pi_l
  g_{m_{i,l}, \sigma_{i,l}^2}(x)} \quad \text{ and } \quad \tilde{\pi}_{i,k} = \int
\gamma_{i,k}(x) d\nu_i(x) ; $$
$$ \tilde{m}_{i,k} = \frac{1}{\tilde{\pi}_{i,k}} \int
 x \gamma_{i,k}(x) d\nu_i(x) \quad \text{ and } \quad \tilde{\sigma}_{i,k}^2 = \frac{1}{\tilde{\pi}_{i,k}} \int
 (x-m_{i,k})^2 \gamma_{i,k}(x) d\nu_i(x) .$$
Automatic differenciation of $F$ can also be used in practice. At each iteration of the gradient descent, we project on the
constraints $\pi_k\geq 0$, $\sigma_{i,k}\geq 0$ and $\sum_k \pi_k
=1$. 

On Figure \ref{fig:MW2KL}, we illustrate this approach on a simple
example.  The distributions $\nu_0$ and $\nu_1$ are 1d discrete distributions, plotted
as the red and blue histograms. On this example, we choose $K=3$ and
we use automatic differenciation (with the torch.autograd Python library) for
the sake of convenience. The red and blue plain curves represent the final distributions $P_0\#\gamma$ and $ P_1\#\gamma$,
for $\lambda$ in the set $\{10, 2, 1, 0.5, 0.1, 0.01\}$. The
behavior is as expected: when $\lambda$ is large, the KL terms are
dominating and the distribution $\gamma$ tends to have its marginal
fitting well the two distributions $\nu_0$ and $\nu_1$.
Whereas, when $\lambda$ is small, the Wasserstein transport term
dominates and the two marginals of $\gamma$ are almost equal.

\begin{figure}[htbp]
\centering
\subfloat[$\lambda=10$]{\includegraphics[width=5cm]{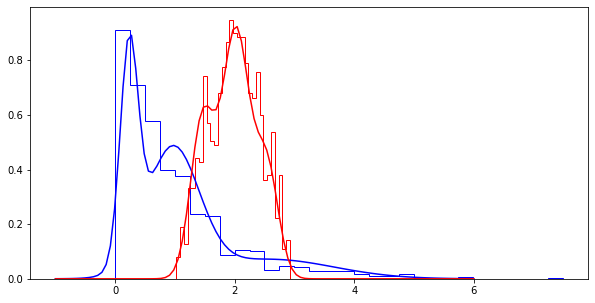}}\hfill
\subfloat[$\lambda=2$]{\includegraphics[width=5cm]{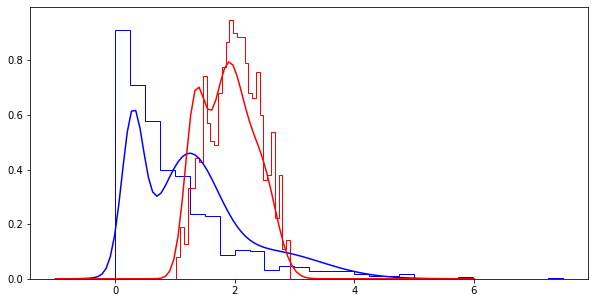}}\hfill
\subfloat[$\lambda=1$]{\includegraphics[width=5cm]{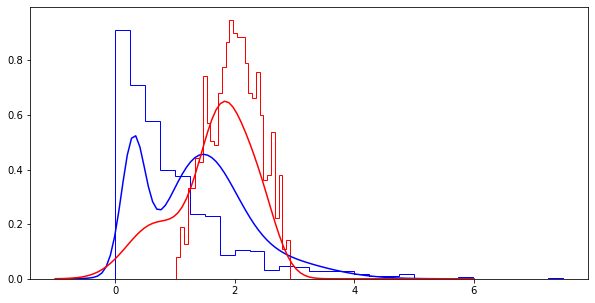}}

\subfloat[$\lambda=0.5$]{\includegraphics[width=5cm]{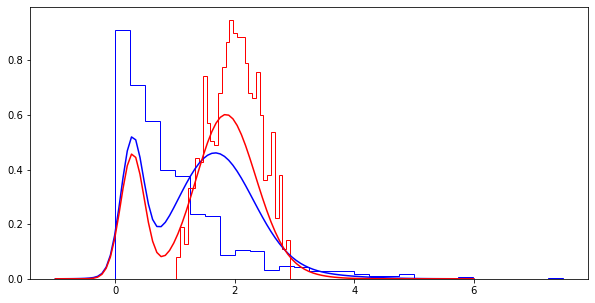}}\hfill
\subfloat[$\lambda=0.1$]{\includegraphics[width=5cm]{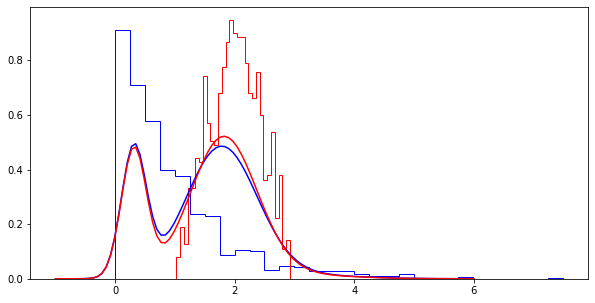}}\hfill
\subfloat[$\lambda=0.01$]{\includegraphics[width=5cm]{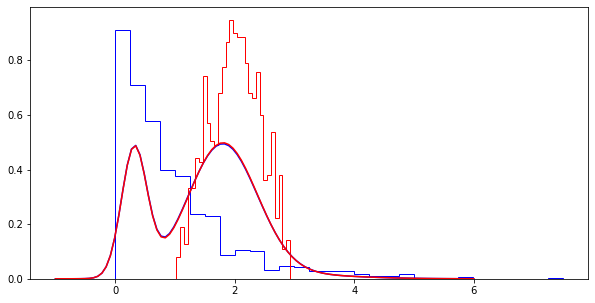}}

\caption{ The distributions $\nu_0$ and $\nu_1$ are 1d discrete distributions, plotted
as the red and blue discrete histograms. The
red and blue plain curves represent the final distributions
$P_0\#\gamma$ and $ P_1\#\gamma$. In this experiment, we use $K=3$ Gaussian components for $\gamma$.\label{fig:MW2KL} }
\end{figure}

\subsection{From a GMM transport plan to a transport map}
Usually, we need not only to have an optimal transport
plan and its corresponding cost, but also an assignment giving for each $x\in\R^d$ a 
corresponding value $T(x)\in\R^d$.
Let $\mu_0$ and $\mu_1$ be two GMM. Then, the optimal
transport plan between $\mu_0$ and $\mu_1$ for 
$MW_2$ is given by
$$  \gamma (x,y) = \sum_{k,l} w_{k,l}^\ast g_{m_0^k,\Sigma_0^k}(x)
\delta_{y=T_{k,l}(x)} .$$
It is not of the form $(\mathrm{Id},T)\#\mu_0$ (see also Figure \ref{1D_example_plans} for an
example), but we can however define a unique
assignment of each $x$, for
instance by setting 
$$ T_{mean}(x) = \E_\gamma (Y | X=x) ,$$
where here $(X,Y)$ is distributed according to the probability
distribution $\gamma$. Then, 
since the distribution of $Y|X=x$ is given by the discrete distribution 
$$ \sum_{k,l}  p_{k,l}(x)  \delta_{T_{k,l}(x)}  \quad \text{ with } \quad
p_{k,l}(x) = \frac{w_{k,l}^\ast g_{m_0^k,\Sigma_0^k}(x)}{\sum_{j} \pi_0^j g_{m_0^j,\Sigma_0^j}(x)} ,$$
we get that
$$ T_{mean}(x) = \frac{\sum_{k,l}
w_{k,l}^\ast g_{m_0^k,\Sigma_0^k}(x) T_{k,l}(x)}{\sum_{k} \pi^k_{0} g_{m_0^k,\Sigma_0^k}(x)} .$$

Notice that the $T_{mean}$ defined this way is an assignment that
will not necessarily satisfy the properties of an optimal transport
map. In particular, in dimension $d=1$, the map $T_{mean}$ may not be
increasing: each $T_{k,l}$ is increasing but because of the weights
that depend on $x$, their weighted sum is not necessarily
increasing. Another issue is that $T_{mean}\#\mu_0$ may be ``far''
from the target distribution $\mu_1$. This happens for instance, in 1D, when
$\mu_0=\mathcal{N}(0,1)$ and $\mu_1$ is the mixture of
$\mathcal{N}(-a,1)$ and $\mathcal{N}(a,1)$, each with weight $0.5$. In
this extreme case we even have that $T_{mean}$ is the identity map,
and thus $T_{mean}\#\mu_0=\mu_0$, that can be very far from $\mu_1$ when $a$ is
large.

Now, another way to define an assignment is to define it as a random
assignment using the optimal plan $\gamma$. More precisely, for a
fixed value $x$ we can
define
$$T_{rand}(x) =  T_{k,l}(x) \quad \text{ with probability } p_{k,l}(x)
= \frac{w_{k,l}^\ast g_{m_0^k,\Sigma_0^k}(x)}{\sum_{j} \pi_0^j
  g_{m_0^j,\Sigma_0^j}(x)} .$$
Observe that, from a mathematical point of view, we can define a random variable $T_{rand}(x)$ for a fixed
value of $x$, or also a finite set of independent random variables
$T_{rand}(x)$  for a finite set of $x$. But constructing and defining  $T_{rand}$ as a
stochastic process on the whole space $\R^d$ would be mathematically
much more difficult (see \cite{Kallenberg} for instance). 

Now, for any measurable set $A$ of $\R^d$ and any $x\in\R^d$, we can 
define the map $\kappa(x,A) := \mathbb{P}[T_{rand}(x) \in A],$ and we have
$$\kappa(x,A) = \frac{\gamma(x,A)}{\sum_{j} \pi_0^j
  g_{m_0^j,\Sigma_0^j}(x) },  \;
\text{ and thus }\;\;\;
\int\kappa(x,A) d\mu_0(x) = \mu_1(A).$$
It means that if the measure $T_{rand}$ could be defined everywhere,
then ``$T_{rand} \# \mu_0$'', would be equal in
expectation to $\mu_1$.

Figure~\ref{fig:points_displacement} illustrates these two possible
assignments $T_{mean}$ and $T_{rand}$ on a simple example.  In this example, two discrete
measures $\nu_0$ and $\nu_1$ are approximated by Gaussian mixtures
$\mu_0$ and $\mu_1$ of order $K$, and we compute the transport maps
$T_{mean}$ and $T_{rand}$ for these two mixtures.  These maps are used
to displace the points of $\nu_0$. We show the result of these
displacements for different values of $K$. We can see that depending
on the configuration of points, the results provided by $T_{mean}$ and
$T_{rand}$ can be quite different. As expected, the measure $T_{rand} \# \nu_0$
(well-defined since $\nu_0$ is composed of a finite set of points)
looks more similar to $\nu_1$ than  $T_{mean} \# \nu_0$ does. And
$T_{rand}$ is also less regular than  $T_{mean}$ (two close points can be easily displaced to two positions far from each other). This may not be desirable in some applications, for instance in color transfer as we will see in Figure \ref{fig:renoir_gauguin_histograms} in the experimental section.

\begin{figure}[h!]
  \centering \includegraphics[width=7cm, angle = 90]{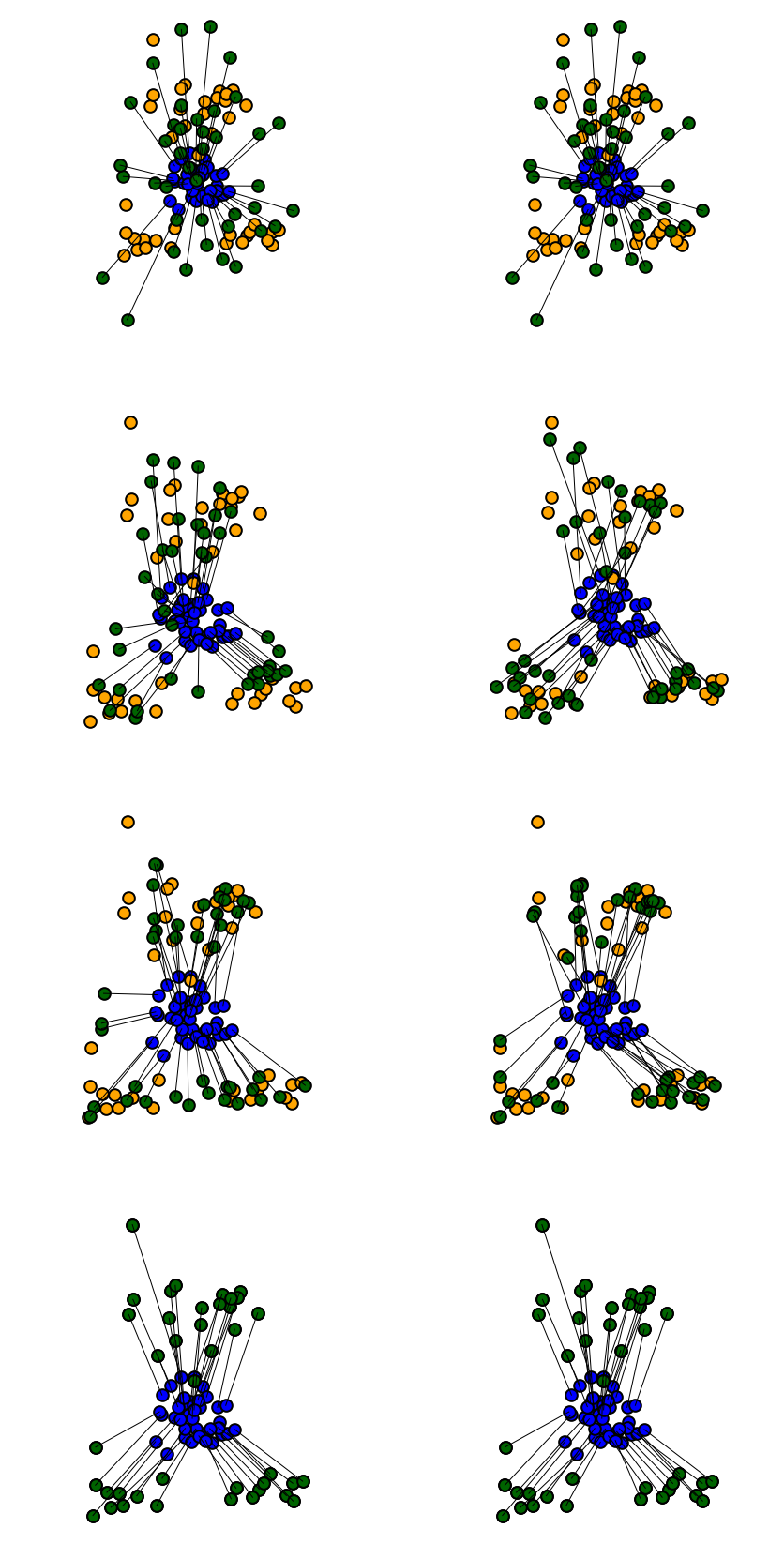}
    \caption{Assignments between two point clouds $\nu_0$ (in blue) and $\nu_1$ (in yellow) composed of $40$ points, for different values of $K$. Green points represent $T\#\nu_0$, where $T=T_{rand}$ on the first line and $T=T_{mean}$ on the second line. The four columns correspond respectively to $K=1,5,10,40$.  Observe that for $K=1$, only one Gaussian is used for each set of points, and $T\#\nu_0$ is quite far from $\nu_1$ (in this case, $T_{rand}$ and $T_{mean}$ coincide). When $K$ increases, the discrete distribution $T\#\nu_0$ becomes closer to $\nu_1$, especially for $T=T_{rand}$. When $K$ is chosen equal to the number of points, we obtain the result of the $W_2$-optimal transport between $\nu_0$ and $\nu_1$.  
\label{fig:points_displacement} }
\end{figure}

\section{Two applications in image processing}
\label{sec:applications}
  
We have already illustrated the behaviour of the distance $MW_2$ in small dimension. In the following, we investigate more involved examples in larger dimension. 
 In the last ten years, optimal transport has been thoroughly used for
 various applications in image processing and computer vision,
 including color transfer, texture synthesis, shape matching. We focus
 here on two simple applications: on the one hand, color transfer,
 that involves to transport mass in dimension $d=3$ since color
 histograms are 3D histograms, and on the other hand patch-based
 texture synthesis, that necessitates transport in  dimension $p^2$
 for $p\times p$ patches. These two applications require to compute
 transport plans or barycenters between potentially millions of
 points. We will see that the use of $MW_2$ makes these computations
 much easier and faster than the use of classical optimal transport,
 while yielding excellent visual results.   
The codes of the different experiments are available through
Jupyter notebooks on \url{https://github.com/judelo/gmmot}.

\subsection{Color transfer}
\label{sec:colortransfer}
We start with the problem of color transfer. A discrete color image
can be seen as a function $u:\Omega \rightarrow \R^3$ where $\Omega =
\{0,\dots n_r-1\}\times\{0,\dots n_c-1\}$ is a discrete grid. The
image size is $n_r\times n_c$ and for each $i \in \Omega$, $u(i)\in
\R^3$ is a set of three values corresponding to the intensities of
red, green and blue in the color of the pixel. Given two images $u_0$
and $u_1$ on grids $\Omega_0$ and $\Omega_1$, we define the discrete
color distributions $\eta_k = \frac 1{|\Omega_k|} \sum_{i\in\Omega_k}
\delta_{u_k(i)}$, $k=0,1$, and we approximate these two distributions
by Gaussian mixtures $\mu_0$ and $\mu_1$ thanks to the
Expectation-Maximization (EM) algorithm\footnote{In practice, we use
  the \textit{scikit-learn} implementation of EM with the
  \textit{kmeans} initialization.}.  Keeping the notations used previously
in the paper, we write $K_k$ the number of Gaussian components in the mixture
$\mu_k$, for $k=0,1$. We compute the $MW_2$ map between these two
mixtures and the corresponding $T_{mean}$. We use it to compute
$T_{mean}(u_0)$, an image with the same content as $u_0$ but with
colors much closer to those of $u_1$. Figure~\ref{fig:renoir_gauguin}
illustrates this process on two paintings by Renoir and Gauguin,
respectively \textit{Le déjeuner des canotiers} and \textit{Manhana no
  atua}. For this experiment, we choose $K_0=K_1=10$. 
The corresponding transport map for $MW_2$ is relatively fast to compute (less than one minute with a non-optimized Python implementation, using the POT library \cite{flamary2017pot} for computing the map between the discrete distributions of $10$ masses). We also show on
the same figure $T_{rand}(u_0)$, the result of the sliced optimal
transport~\cite{rabin2011wasserstein,bonneel2015sliced}, and the
result of the separable optimal transport (i.e. on each color channel separately). Notice that the
complete optimal transport on such huge discrete distributions
(approximately 800000 Dirac masses for these $1024\times 768$ images)
is hardly tractable in practice. As could be expected, the image
$T_{rand}(u_0)$ is much noisier than the image $T_{mean}(u_0)$. 
We show on
Figure~\ref{fig:renoir_gauguin_histograms} the discrete color
distributions of these different images and the corresponding classes provided by EM (each point is assigned to its most likely class).

\begin{figure}[h!]
  \centering \includegraphics[width=6cm]{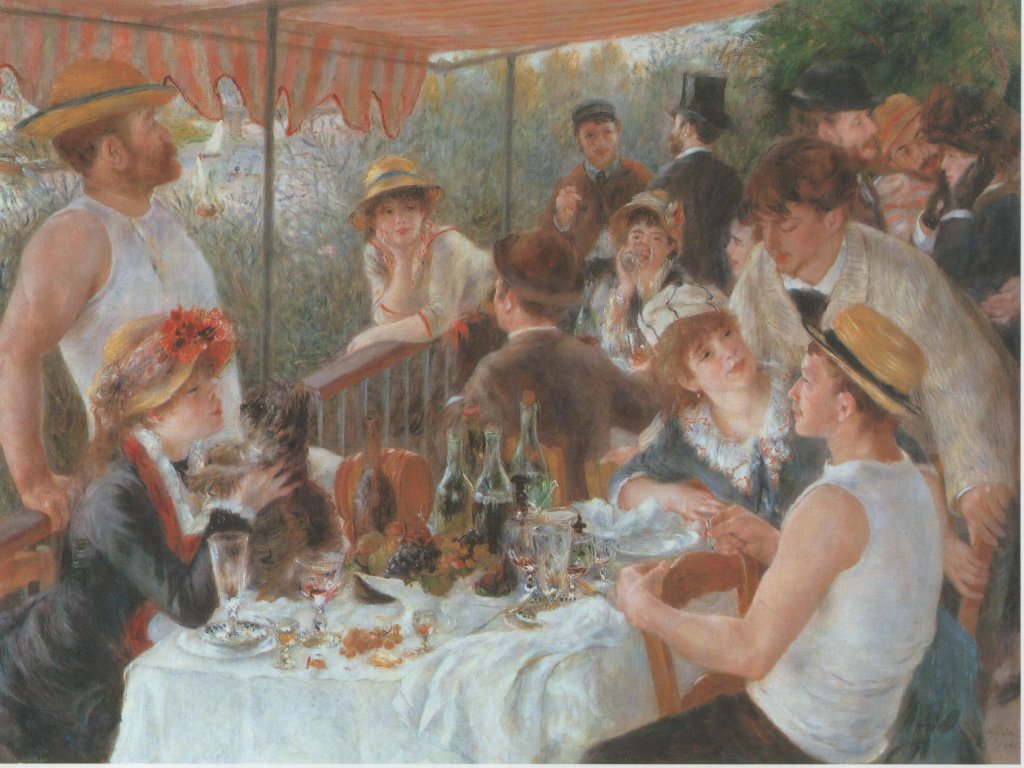}
\includegraphics[width=6cm]{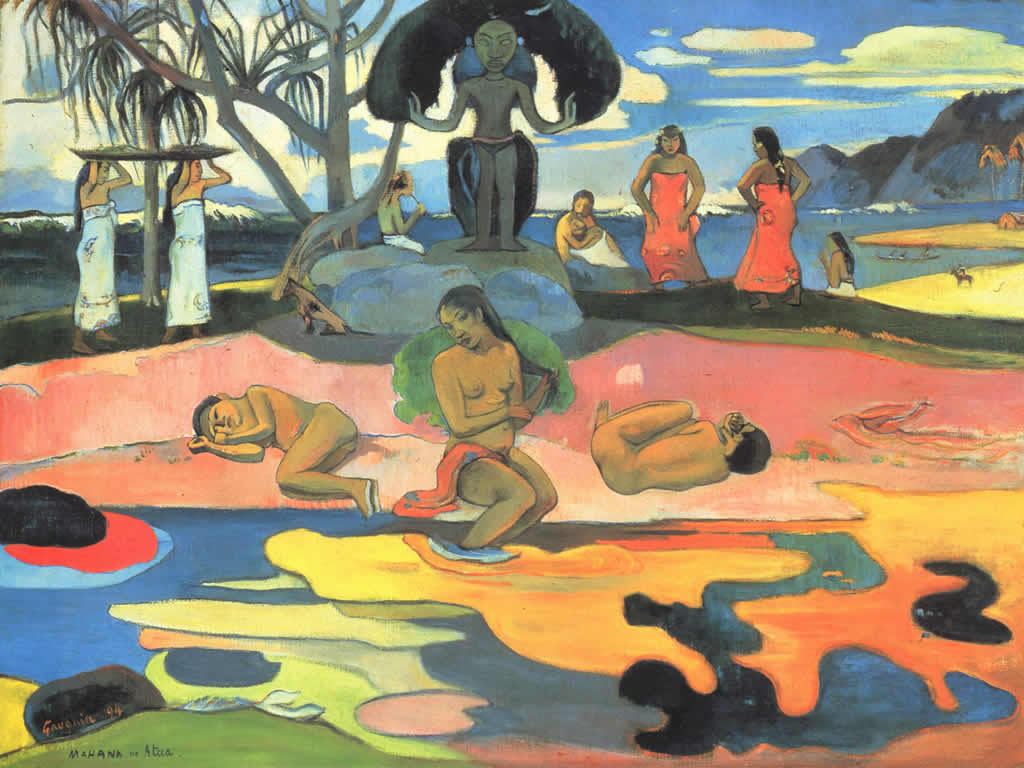}
\vspace{.3em}
\includegraphics[width=6cm]{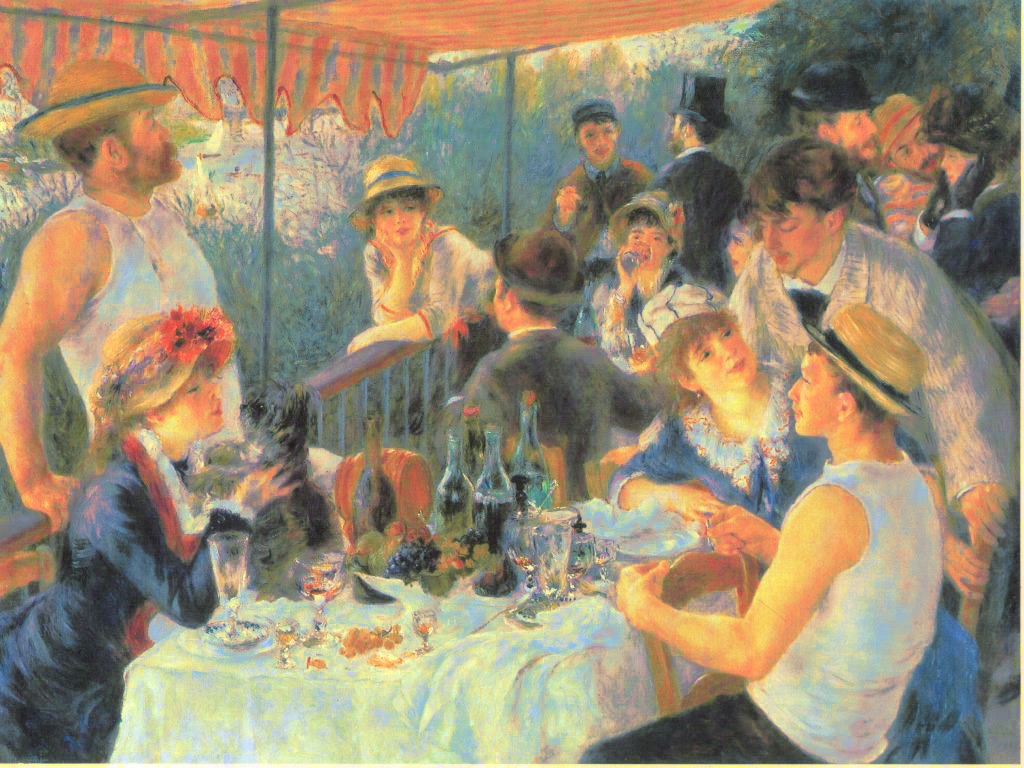}
\includegraphics[width=6cm]{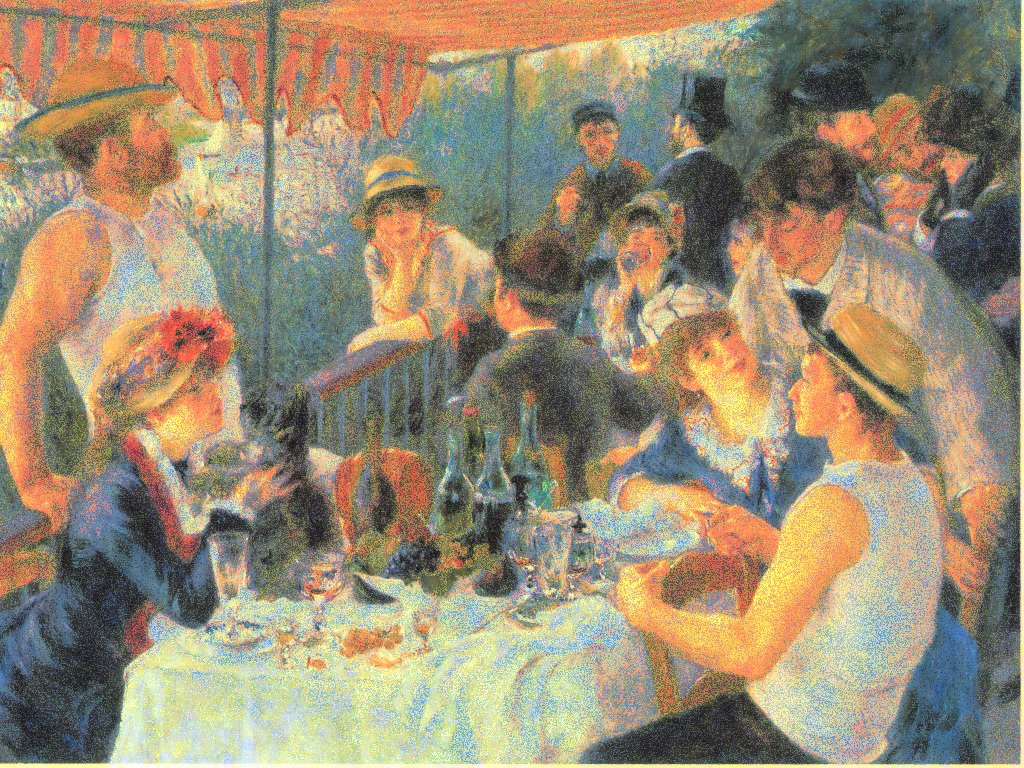}
\vspace{.3em}

\includegraphics[width=6cm]{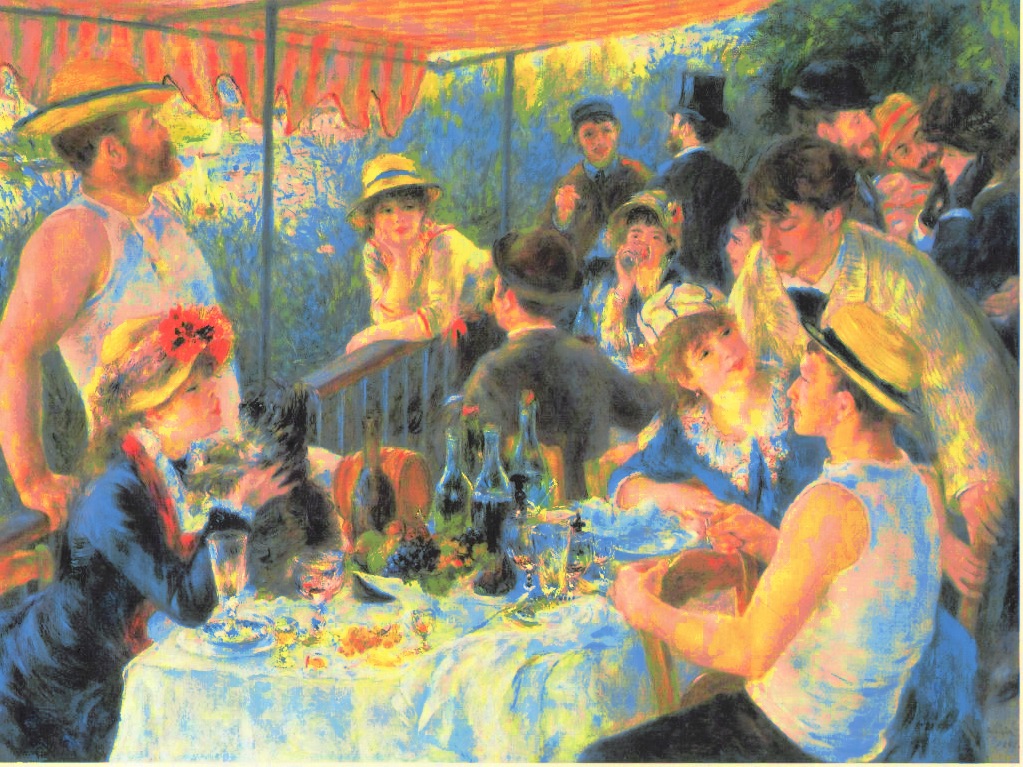}
\includegraphics[width=6cm]{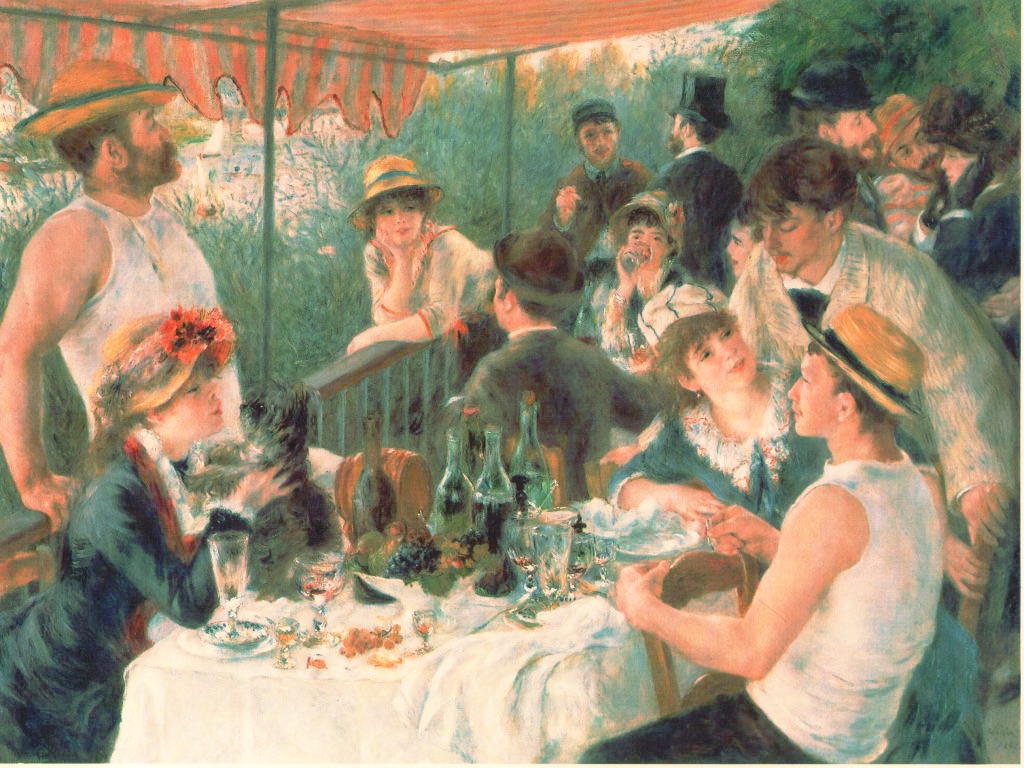}
\vspace{.3em}

    \caption{First line, images $u_0$ and $u_1$ (two paintings by
      Renoir and Gauguin). Second line,  $T_{mean}(u_0)$ and $T_{rand}(u_0)$. Third line, color
      transfer with the sliced optimal transport~\cite{rabin2011wasserstein,bonneel2015sliced}, that we denote by
      $SOT(u_0)$  and result of the separable optimal transport (color
      transfer is applied separately on the three dimensions -
      channels - of the color  distributions). 
\label{fig:renoir_gauguin} }
\end{figure}

\begin{figure}[h]
\begin{center}
\begin{tabular}{lll}
\includegraphics[width=4.5cm]{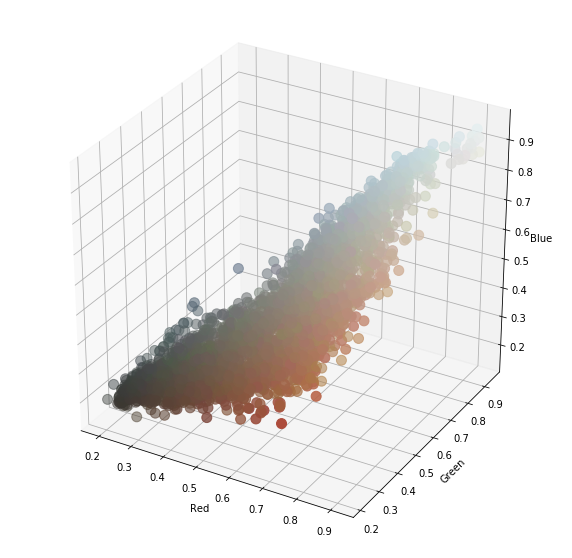}
  & \hspace{-1cm}
\includegraphics[width=4.5cm]{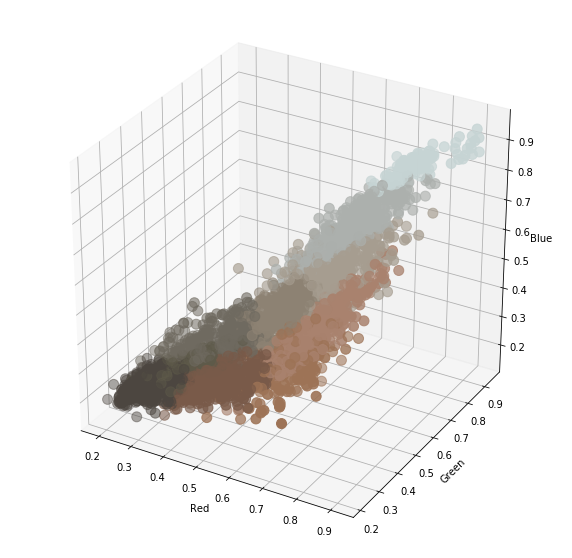}
 &\hspace{-1cm}
\includegraphics[width=4.5cm]{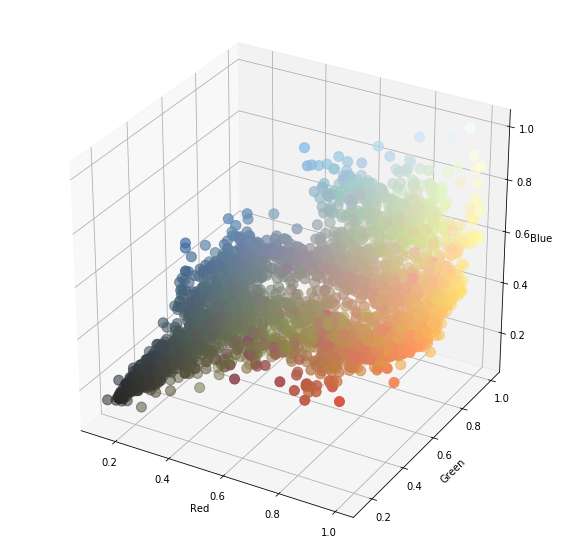} 
\\
\includegraphics[width=4.5cm]{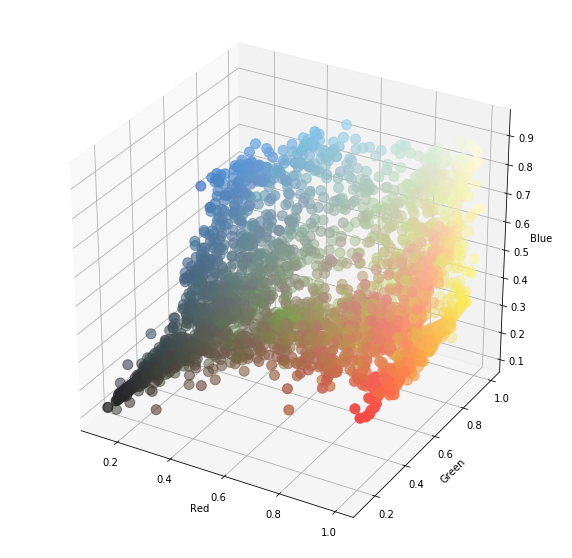}
&\hspace{-1cm}
\includegraphics[width=4.5cm]{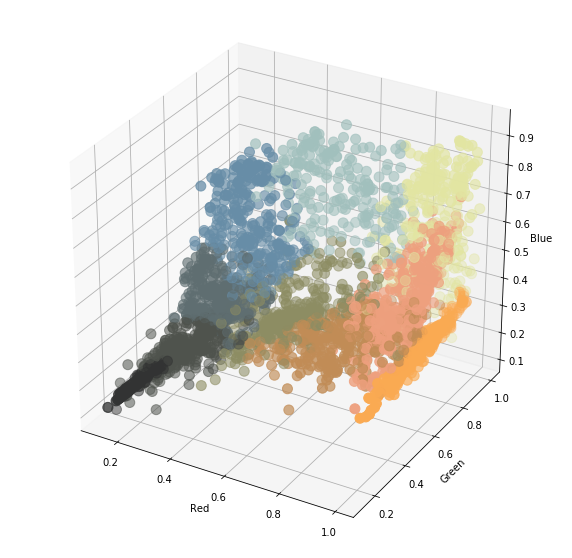}
 & \hspace{-1cm}
\includegraphics[width=4.5cm]{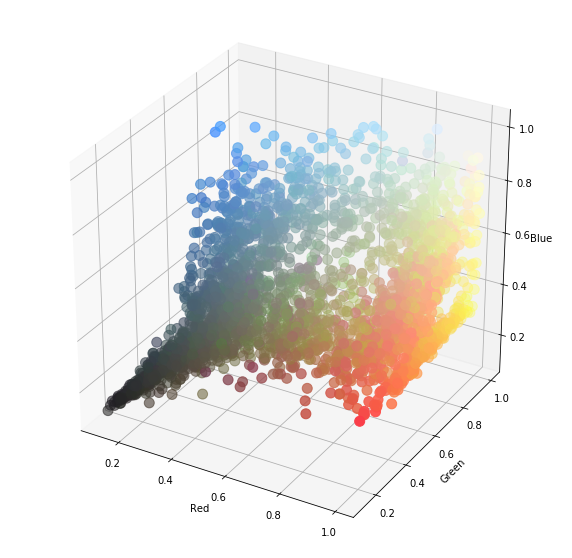}
\end{tabular}
\end{center}
  \caption{The images $u_0$ and $u_1$ are the ones of
    Figure~\ref{fig:renoir_gauguin}.  First line: color distribution of the image $u_0$, the
    $10$ classes found by the EM algorithm, and color distribution of
    $T_{mean}(u_0)$. Second line: color distribution of the image $u_1$, the
    $10$ classes found by the EM algorithm, and color distribution of
    $T_{rand}(u_0)$.
\label{fig:renoir_gauguin_histograms}}
\end{figure}

The value $K=10$ that we have chosen here is the result of a
compromise. Indeed, when $K$ is too small, the approximation by the
mixtures is generally too rough to represent the complexity of the color data
properly.  At the opposite, we have observed that increasing the number
of components does not necessarily help since the corresponding
transport map will loose regularity. For color transfer experiments,
we found in practice that using around $10$ components yields the best
results.    We also illustrate this on Figure
\ref{fig:red_white_mountains}, where we show the results of the color
transfer with $MW_2$ for different values of $K$. On the different
images, one can appreciate how the color distribution gets closer and
closer to the one of the target image as $K$ increases. 

\begin{figure}[h]
\begin{center}
\begin{tabular}{ccccc}
\includegraphics[width=3cm]{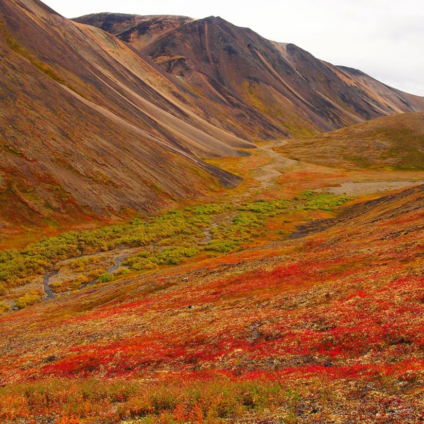}
  &  \hspace{-0.5cm}
\includegraphics[width=3cm]{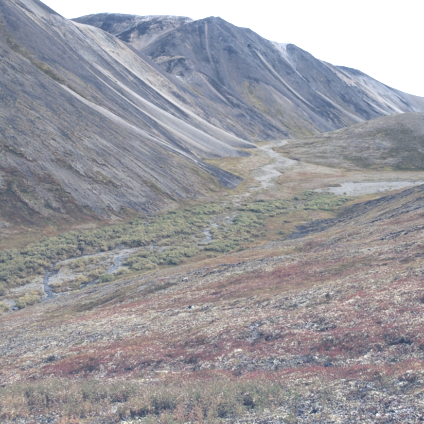}
 & \hspace{-0.5cm}
\includegraphics[width=3cm]{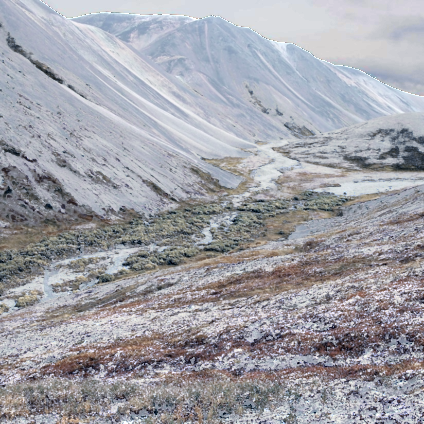} 
& \hspace{-0.5cm}
\includegraphics[width=3cm]{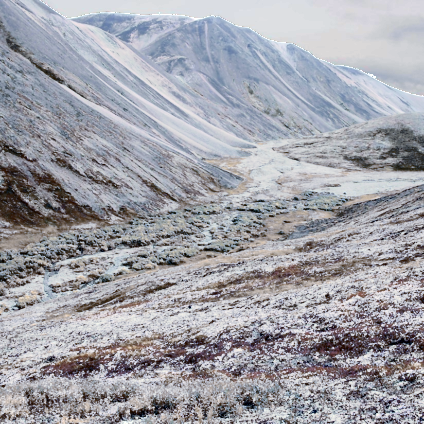}
& \hspace{-0.5cm}
\includegraphics[width=3cm]{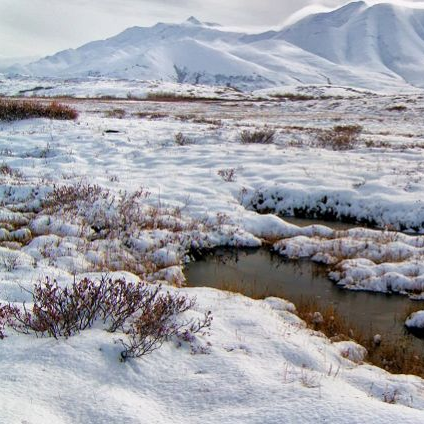}
  \\
 & $K=1$ & $K=3$ &  $K=10$ & 
\end{tabular}
\end{center}
  \caption{The left-most image is the ``red mountain'' image, and its
    color distribution is modified to match the one of the right-most image (the
    ``white mountain'' image) with $MW_2$ using respectively $K=1$,
    $K=3$ and $K=10$ components in the Gaussian mixtures. 
\label{fig:red_white_mountains}}
\end{figure}

We end this section with a color manipulation experiment, shown on
Figure~\ref{fig:cat_barycenter}. Four different images being given, we
create barycenters for $MW_2$ between their four color palettes
(represented again by mixtures of 10 Gaussian components), and we modify the first of the four images so that its color palette spans this space of barycenters. For this experiment (and this experiment only), a spatial regularization step is applied in post-processing~\cite{rabin2011removing} to remove some artifacts created by these color transformations between highly different images.

\begin{figure}[h!]
  \centering \includegraphics[width=13cm]{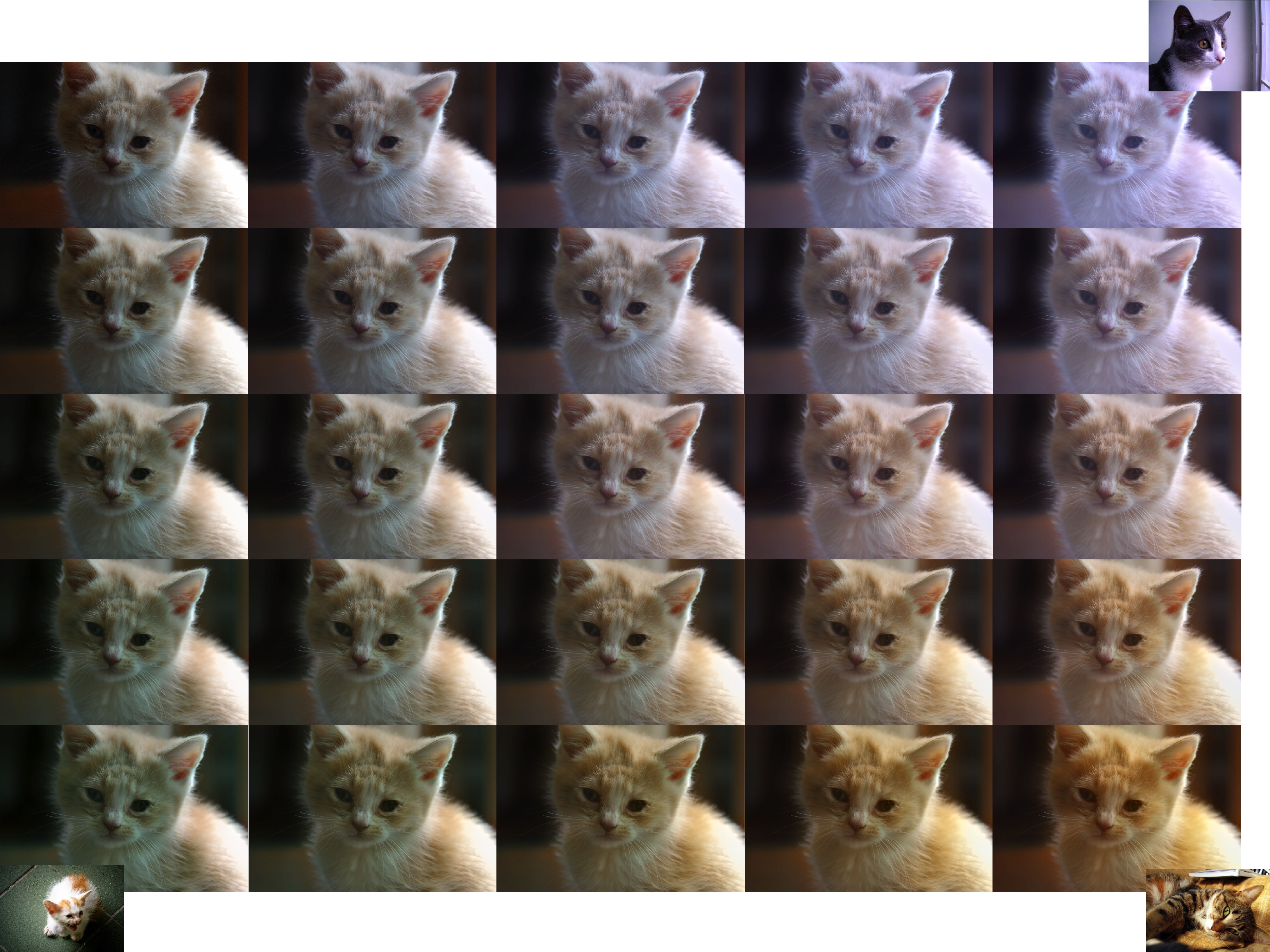}
    \caption{In this experiment, the top left image is modified in
      such a way that its color palette goes through the
      $MW_2$-barycenters between the color palettes of the four corner
      images. Each color palette is represented as a mixture of 10
      Gaussian components. The weights used for the barycenters are
      bilinear with respect to the four corners of the rectangle.  
\label{fig:cat_barycenter} }
\end{figure}

\subsection{Texture synthesis}

Given an exemplar texture image $u:\Omega \rightarrow R^3$, the goal
of texture synthesis is to synthetize images with the same perceptual
characteristics as $u$, while keeping some innovative content. The
literature on texture synthesis is rich, and we will only focus here
on a  bilevel approach proposed recently
in~\cite{galerne2017semi}. The method relies on the optimal transport
between a continuous (Gaussian or Gaussian mixtures) distribution and
a discrete distribution (distribution of the patches of the exemplar
texture image). The first step of the method can be described as follows. For a given exemplar image $u:\Omega \rightarrow R^3$, the authors compute the asymptotic discrete spot noise (ADSN) associated with $u$, which is the stationary Gaussian random field $U:\mathbb{Z}^2 \rightarrow \R^3$ with same mean and covariance as $u$, {\em i.e.}
\[\forall x \in \mathbb{Z}^2, \; U(x) = \bar{u}+ \sum_{y\in
  \mathbb{Z}^2} t_u(y)W(x-y), \; \text{ where }
  \begin{cases}
    \bar{u} = \frac 1 {|\Omega|} \sum_{x\in \Omega} u(x)\\
t_u =  \frac 1 {\sqrt{|\Omega|}} (u-\bar{u}) \mathbf{1}_{\Omega}, 
  \end{cases}
\]
with $W$ a standard normal Gaussian white noise on $\mathbb{Z}^2$.
Once the ADSN $U$ is computed, they extract a set $S$ of 
$p\times p$ sub-images (also called \textit{patches}) of $u$. In our
experiments, we extract one
patch for each pixel of $u$ (excluding the borders), so patches are
overlapping and the number of patches is approximately equal to the
image size. The authors of~\cite{galerne2017semi} then  
define $\eta_1$ the empirical distribution of this set of patches
(thus $\eta_1$ is in dimension $3\times p \times p$, {\em i.e.} $27$
for $p=3$) and $\eta_0$ the Gaussian distribution of patches of $U$,
and compute the semi-discrete optimal transport map $T_{SD}$ from
$\eta_0$ to $\eta_1$. This map $T_{SD}$ is then applied to each patch
of a realization of $U$, and an ouput synthetized image $v$ is
obtained by averaging the transported patches at each pixel. Since the
semi-discrete optimal transport step is numerically very expensive in
such high dimension, we propose to make use of the $MW_2$ distance
instead. For that, we approximate the two discrete patch distributions
of $u$ and $U$ by Gaussian Mixture models $\mu_0$ and $\mu_1$, and we
compute the optimal map $T_{mean}$ for $MW_2$ between them. The rest
of the algorithm is similar to the one described
in~\cite{galerne2017semi}. Figure~\ref{fig:texture} shows the results for different choices of  exemplar images $u$.
In practice, we use $K_0=K_1=10$, as in
color transfer, and $3\times 3$ color patches. The results obtained
with our approach are visually very similar to the ones obtained
with~\cite{galerne2017semi}, for a computational time approximately
10 times smaller. More precisely, for instance for an image of size
$256\times 256$, the proposed approach takes about $35$ seconds,
whereas the semi-discrete approach of \cite{galerne2017semi} takes about $400$ seconds.
We are currently exploring a multiscale version of
this approach, inspired by the recent~\cite{leclaire2019multi}. 

\begin{figure}[h]
\begin{center}
\includegraphics[width=4cm]{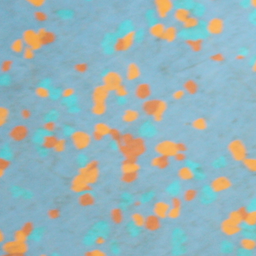}
\includegraphics[width=4cm]{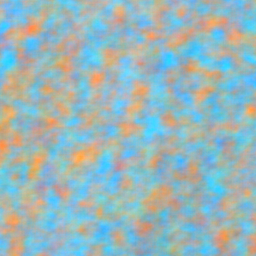}
\includegraphics[width=4cm]{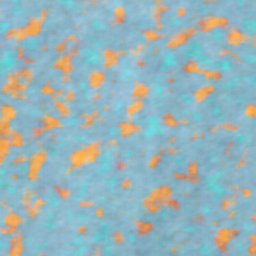}

\includegraphics[width=4cm]{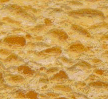}
\includegraphics[width=4cm]{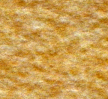}
\includegraphics[width=4cm]{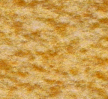}

\includegraphics[width=4cm]{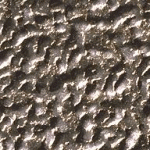}
\includegraphics[width=4cm]{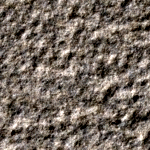}
\includegraphics[width=4cm]{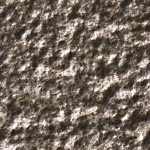}
\end{center}
\caption{Left, original texture $u$. Middle, ADSN $U$. Right, synthetized version.\label{fig:texture}}
\end{figure}

\section{Discussion and conclusion}

In this paper, we have defined a Wasserstein-type distance on the set
of Gaussian mixture models, by restricting the set of possible
coupling measures to Gaussian mixtures. We have shown that this
distance, with an explicit discrete formulation, is easy to compute
and suitable to compute transport plans or barycenters in high
dimensional problems where the classical Wasserstein distance remains
difficult to handle. We have also discussed the fact that the distance $MW_2$ could be
extended to other types of mixtures, as soon as we have a marginal
consistency property and an
identifiability property similar to the one used in the proof of
Proposition~\ref{prop:MW2}. In practice, Gaussian mixture models are
versatile enough to represent large classes of concrete and applied
problems. One important question raised by the introduced framework
and its generalization in Section \ref{Extension:subsec} is
how to estimate the mixtures for discrete data, since the obtained result
 will depend on the number $K$ of Gaussian components in the mixtures and on
the parameter $\lambda$ that weights the data-fidelity terms.
 If the number of Gaussian
components is chosen
large enough, and covariances small enough, the transport plan for
$MW_2$ will look very similar to the one of $W_2$, but at the price of a
high computational cost. If, on the contrary, we choose a very small
number of components (like in the color transfer experiments of
Section~\ref{sec:colortransfer}), the resulting optimal transport map
will be much simpler, which seems to be desirable for some applications.

\section*{Acknowledgments} 

We would like to thank Arthur Leclaire for his valuable assistance for
the texture synthesis experiments.

\section*{Appendix: proofs}

\subsection*{Density of $GMM_d(\infty)$ in $\mathcal{P}_p(\R^d)$}

\begin{lemmastar}[Lemma 3.1]
The set 
\[ \left\{ \sum_{k=1}^N \pi_k \delta_{y_k}\;;\; N\in\N,\;(y_k)_k \in (\R^{d})^N, \; (\pi_k)_k\in \Gamma_N \right\}\]
is dense in $\mathcal{P}_p(\R^d)$ for the metric $W_p$, for any $p\geq 1$. 
\end{lemmastar}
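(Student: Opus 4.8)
The plan is to approximate an arbitrary $\mu \in \mathcal{P}_p(\R^d)$ by collapsing its mass onto a finite point set through a well-chosen measurable map, and then to bound $W_p$ by the cost of the associated deterministic coupling. Fix $\eps > 0$. Since $\mu$ has a finite moment of order $p$, I would first choose $R$ large enough that the tail $\int_{\|x\|>R}\|x\|^p \, d\mu(x)$ is smaller than $\eps^p$; this is the only place where the moment hypothesis is used, and it is precisely what makes the unbounded support harmless.

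Next I would tile the ball $B(0,R)$ by finitely many disjoint Borel cells $A_1,\dots,A_N$, each of diameter at most $\delta$, and pick a representative $y_i \in A_i$ in each. Define a measurable map $T:\R^d \to \R^d$ by $T(x) = y_i$ for $x \in A_i$ and $T(x) = 0$ for $x \notin B(0,R)$. The image of $T$ is the finite set $\{0, y_1, \dots, y_N\}$, so the pushforward $\nu := T\#\mu = \sum_{i=1}^N \mu(A_i)\,\delta_{y_i} + \mu(\R^d\setminus B(0,R))\,\delta_0$ is a finite convex combination of Dirac masses whose weights sum to $1$ by construction.

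Since $(\mathrm{Id}, T)\#\mu \in \Pi(\mu,\nu)$, the definition of $W_p$ gives the bound
\[
W_p^p(\mu,\nu) \le \int_{\R^d} \|x - T(x)\|^p \, d\mu(x) = \int_{B(0,R)} \|x-T(x)\|^p\,d\mu(x) + \int_{\|x\|>R} \|x\|^p\,d\mu(x).
\]
On the ball the integrand is at most $\delta^p$ because $x$ and $T(x)$ lie in the same cell of diameter $\delta$, while the tail term is below $\eps^p$ by the choice of $R$. Hence $W_p^p(\mu,\nu) \le \delta^p\,\mu(B(0,R)) + \eps^p \le \delta^p + \eps^p$. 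Letting $\delta$ and $\eps$ shrink produces a sequence of finitely-supported measures converging to $\mu$ in $W_p$, which is exactly the density claim.

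The only genuinely delicate point is the tail control, and it is resolved cleanly by the finite-moment hypothesis: without it the mass escaping to infinity would make any finitely-supported approximation fail in $W_p$. Everything else is routine bookkeeping — partitioning a bounded set into small Borel cells is standard, the map $T$ is manifestly measurable as a finite combination of indicator functions, and the cost bound is just the elementary inequality $W_p^p(\mu, T\#\mu) \le \int \|x - T(x)\|^p \, d\mu(x)$ coming from the admissible coupling $(\mathrm{Id},T)\#\mu$.
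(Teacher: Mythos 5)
Your proof is correct and follows essentially the same route as the paper's: both control the tail using the finite $p$-th moment, partition a large ball into finitely many small disjoint pieces, collapse each piece onto a representative point via a measurable map $T$, and bound $W_p^p(\mu, T\#\mu)$ by the cost $\int \|x - T(x)\|^p\,d\mu(x)$ of the deterministic coupling. The only cosmetic difference is that the paper builds its cells from a finite cover by $\eps$-balls rather than an abstract tiling by cells of diameter $\delta$.
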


\begin{proof}
  The proof is adapted from the proof of Theorem 6.18 in~\cite{villani2008optimal} and given here for the sake of completeness.

Let $\mu \in \mathcal{P}_p(\R^d)$. For each $\epsilon>0$, we can find $r$ such that
$\int_{B(0,r)^c}\|y\|^pd\mu(x) \leq\epsilon^p$, where $B(0,r) \subset
\R^d$ is the ball of center $0$ and radius $r$, and $B(0,r)^c$ denotes
its complementary set in $\R^d$. The ball $B(0,r)$ can be covered by a
finite number of balls $B(y_k,\epsilon)$, $1\leq k \leq N$. Now,
define $B_k = B(y_k,\epsilon) \setminus \cup_{1\leq j<k}
B(y_j,\epsilon)$, all these sets are disjoint and still cover
$B(0,r)$. \\
Define $\phi:\R^d\rightarrow \R^d$ on $\R^d$ such that 
\[ \forall k,\; \forall y\in B_k \cap B(0,r) , \,\, \phi(y) = y_k\;\text{ and }\;
\forall y\in B(0,r)^c, \,\,  \phi(y) = 0.\]
Then, 
\[\phi \# \mu =  \sum_{k=1}^N \mu(B_k\cap B(0,r)) \delta_{y_k} + \mu(B(0,r)^c)\delta_0 \] and  
\begin{eqnarray*}
W_p^p(\phi \# \mu,\mu) &\leq& \int_{\R^d} \|y - \phi(y)\|^pd\mu(y) \\ &\leq& \epsilon^p\int_{B(0,r)}d\mu(y) + \int_{B(0,r)^c} \|y\|^p d\mu(y) \leq \epsilon^p+ \epsilon^p = 2\epsilon^p,
\end{eqnarray*}
which finishes the proof.
\end{proof}

\subsection*{Identifiability properties of Gaussian mixture models}

\begin{propstar}[Proposition 2]
The set of finite Gaussian mixtures is identifiable, in the sense that two mixtures $\mu_0 =\sum_{k=1}^{K_0} \pi_0^k \mu_0^k$ and $\mu_1= \sum_{k=1}^{K_1} \pi_1^k \mu_1^k$, written such that all $\{\mu_0^k\}_k$ (resp. all $\{\mu_1^j\}_j$) are pairwise distinct, are equal if and only if $K_0=K_1$ and we can reorder the indexes such that for all $k$, $\pi_0^k = \pi_1^k$, $m_0^k = m_1^k$ and $\Sigma_0^k = \Sigma_1^k$. 
  \label{lemma:unicity_gaussian}
\end{propstar}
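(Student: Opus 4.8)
The plan is to reduce the statement to the linear independence of Gaussian characteristic functions with pairwise distinct parameters, and to establish that independence by a dominant-term asymptotic analysis along a generic ray. The reverse implication is immediate, so only the direct one requires work.

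First I would pass to characteristic functions. The characteristic function of $\NN(m,\Sigma)$ is
\[
\phi_{m,\Sigma}(\xi) = \exp\!\left(i\langle m,\xi\rangle - \tfrac{1}{2}\xi^t\Sigma\xi\right),
\]
which is well defined for every symmetric positive semidefinite $\Sigma$, including the degenerate case $\Sigma=0$ corresponding to a Dirac mass; this is why characteristic functions are preferable to densities here. Since a probability measure is determined by its characteristic function, the equality $\mu_0=\mu_1$ is equivalent to
\[
\sum_{k=1}^{K_0}\pi_0^k\,\phi_{m_0^k,\Sigma_0^k}(\xi) = \sum_{l=1}^{K_1}\pi_1^l\,\phi_{m_1^l,\Sigma_1^l}(\xi)\qquad\text{for all }\xi\in\R^d.
\]
Collecting the finitely many distinct parameter pairs $(m_j,\Sigma_j)$ occurring in either mixture and attaching to each its signed total weight $c_j$, this rewrites as $\sum_j c_j\,\phi_{m_j,\Sigma_j}\equiv 0$. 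It therefore suffices to show that the family $\{\phi_{m,\Sigma}\}$, indexed by pairwise distinct $(m,\Sigma)$, is linearly independent: this forces every $c_j=0$, and since the components within each of $\mu_0$ and $\mu_1$ are already pairwise distinct, a component of $\mu_0$ can have zero net weight only by coinciding with a component of $\mu_1$ of equal weight, yielding $K_0=K_1$ and the claimed matching after reordering.

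For the independence, assume $\sum_{j=1}^M c_j\,\phi_{m_j,\Sigma_j}\equiv 0$ with the $(m_j,\Sigma_j)$ pairwise distinct, and group the indices by the value of the covariance. I would choose a unit vector $u$ avoiding the finite union of measure-zero sets $\{u:u^t(\Sigma_i-\Sigma_j)u=0\}$ for $\Sigma_i\neq\Sigma_j$ and $\{u:\langle m_i-m_j,u\rangle=0\}$ for $m_i\neq m_j$ sharing a covariance. For such a generic $u$, the quadratic values $q_j:=u^t\Sigma_j u$ coincide only within a single covariance group, and the scalars $\langle m_j,u\rangle$ are distinct inside each group. Evaluating along the ray $\xi=tu$ as $t\to+\infty$, each term has modulus $|c_j|\exp\!\left(-\tfrac{t^2}{2}q_j\right)$, so the indices realizing $\min_j q_j$ — forming a single group $G^\ast$ by the choice of $u$ — dominate all others by an exponential factor. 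Dividing the identity by $\exp\!\left(-\tfrac{t^2}{2}\min_j q_j\right)$ and letting $t\to\infty$ gives $\sum_{j\in G^\ast}c_j\,e^{it\langle m_j,u\rangle}\to 0$; since the frequencies are pairwise distinct, the mean square $\lim_{T\to\infty}\frac{1}{T}\int_0^T\bigl|\sum_{j\in G^\ast}c_j e^{it\langle m_j,u\rangle}\bigr|^2\,dt=\sum_{j\in G^\ast}|c_j|^2$ must vanish, so $c_j=0$ for $j\in G^\ast$. Removing this group and repeating with the next smallest value of $q_j$ shows by induction that all $c_j=0$.

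The generic-direction selection and the mean-square computation are routine. The main obstacle is the bookkeeping forced by allowing distinct components to share a covariance matrix: a single ray cannot separate such components through the covariance alone, which is precisely why the argument first isolates an entire covariance group via the dominant exponential decay and only afterwards separates its means through the oscillatory frequencies $\langle m_j,u\rangle$.
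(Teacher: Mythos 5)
Your proof is correct, and it takes a genuinely different route from the paper's. The paper works directly with densities: in dimension one it identifies, as $x\to+\infty$, the component with the largest variance (ties broken by the largest mean) as the dominant term of the density, peels it off and recurses; it then reduces $d>1$ to $d=1$ by projecting onto directions $\xi$, which forces a careful measure-theoretic bookkeeping (the sets $\Theta_i$ and $\Xi_{k,j}$) because distinct $d$-dimensional components can project onto identical one-dimensional Gaussians for special $\xi$. You instead recast identifiability as linear independence of the characteristic functions $\phi_{m,\Sigma}$ and fix a single generic direction $u$ once and for all; your dominant-term analysis along the ray $tu$ isolates a whole covariance group (the one minimizing $u^t\Sigma u$) rather than a single component, and the means inside that group are then separated not by a further asymptotic comparison but by the Ces\`{a}ro-mean orthogonality of the distinct frequencies $\langle m_j,u\rangle$ (a trigonometric polynomial with distinct frequencies and a nonzero coefficient cannot tend to zero). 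Your approach buys two things: degenerate components --- Dirac masses and, in $d>1$, Gaussians supported on proper affine subspaces --- are handled uniformly because characteristic functions never require a density, whereas the paper must set Dirac masses aside at the start of its one-dimensional argument; and the matching of components between the two mixtures falls out of linear independence in one step instead of being reassembled direction by direction. What the paper's argument buys in exchange is elementarity: it uses only real asymptotics of Gaussian densities and no Fourier-analytic input. Both proofs nevertheless share the same skeleton of choosing a generic direction, ordering components by a decay rate, and peeling off the slowest-decaying terms.
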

This result is classical and the proof is also given here in the Appendix for the sake of completeness. 
\begin{proof}
This proof is an adaptation and simplification of the proof of
Proposition 2 in~\cite{yakowitz1968}. First, assume that $d=1$ and
that two Gaussian mixtures are equal:
\begin{equation}
\sum_{k=1}^{K_0} \pi_0^k \mu_0^k = \sum_{j=1}^{K_1} \pi_1^j \mu_1^j.\label{eq:GMM_equal}
\end{equation}
We start by identifying the Dirac masses from both sums, so only
non-degenerate Gaussian components remain. 
Writing $\mu_i^k = \mathcal N (m_i^k, (\sigma_i^k)^2)$,
it follows that 
\[\sum_{k=1}^{K_0} \frac{\pi_0^k}{\sigma_0^k} e^{-\frac{(x-m_0^k)^2}{2(\sigma_0^k)^2}} = \sum_{j=1}^{K_1} \frac{\pi_1^j}{\sigma_1^j} e^{-\frac{(x-m_1^j)^2}{2(\sigma_1^j)^2}},\;\;\forall x \in \mathbb{R}.\]
Now, define $k_0 = \mathrm{argmax}_k \sigma_0^k$ and $j_0 =
\mathrm{argmax}_j \sigma_1^j$. If the maximum is attained for several
values of $k$ (resp. $j$), we keep the one with the largest mean
$m_0^k$ (resp. $m_1^j$). Then, when $x\rightarrow +\infty$, we have
the equivalences
\[\sum_{k=1}^{K_0} \frac{\pi_0^k}{\sigma_0^k}
e^{-\frac{(x-m_0^k)^2}{2(\sigma_0^k)^2}} \underset{x\to+\infty}{\sim} \frac{\pi_0^{k_0}}{\sigma_0^{k_0}} e^{-\frac{(x-m_0^{k_0})^2}{2(\sigma_0^{k_0})^2}} \;\;\text{ and } \sum_{j=1}^{K_1} \frac{\pi_1^j}{\sigma_1^j} e^{-\frac{(x-m_1^j)^2}{2(\sigma_1^j)^2}} \underset{x\to+\infty}{\sim} \frac{\pi_1^{j_0}}{\sigma_1^{j_0}} e^{-\frac{(x-m_1^{j_0})^2}{2(\sigma_1^{j_0})^2}}.\]
Since the two sums are equal, these two terms must also be equivalent when $x\rightarrow +\infty$, which implies  necessarily that $\sigma_0^{k_0} = \sigma_1^{j_0}$, $m_0^{k_0} = m_1^{j_0}$ and $\pi_0^{k_0} = \pi_1^{j_0}$. Now, we can remove these two components from the two sums and we obtain
 \[\sum_{k=1\dots K_0,\; k \neq k_0} \frac{\pi_0^k}{\sigma_0^k} e^{-\frac{(x-m_0^k)^2}{2(\sigma_0^k)^2}} = \sum_{j=1\dots K_1,\; j\neq j_0} \frac{\pi_1^j}{\sigma_1^j} e^{-\frac{(x-m_1^j)^2}{2(\sigma_1^j)^2}},\;\;\;\forall x \in \mathbb{R}.\]
We can start over and show recursively that all components are equal.

For $d>1$, assume once again that two Gaussian mixtures $\mu_0$ and $\mu_1$ are equal, written as in Equation~\eqref{eq:GMM_equal}. 
The projection of this equality  yields 
\begin{equation}
 \sum_{k=1}^{K_0} \pi_0^k \mathcal N (\langle m_0^k,\xi\rangle ,\xi^t\Sigma_0^k\xi) =  \sum_{j=1}^{K_1} \pi_1^j \mathcal N (\langle m_1^j,\xi\rangle ,\xi^t\Sigma_1^j\xi),\;\;\; \forall \xi\in \mathbb{R}^d.\label{eq:GMM_projected} 
\end{equation}
At this point, observe that for some values of $\xi$, some of these projected components may not be pairwise distinct anymore, so we cannot directly apply the result for $d=1$ to such mixtures. However, since the pairs $(m_0^k,\Sigma_0^k)$ (resp. $(m_1^j,\Sigma_1^j)$) are all distinct, then for $i=0,1$, the set
\[\Theta_i = \bigcup_{1\leq k,k'\leq K_i}\left\{\xi \;\;\text{s.t.}\;  \langle m_i^k - m_i^{k'},\xi\rangle  = 0 \;\;\text{and}\;\; \xi^t\left(\Sigma_i^k - \Sigma_i^{k'}\right)\xi=0\right\}\]
is of Lebesgue  measure $0$ in $\mathbb{R}^d$. For any $\xi$ in $\mathbb{R}^d\setminus\Theta_0\cup\Theta_1$, the pairs $\{(\langle m_0^k,\xi\rangle,\xi^t\Sigma_0^k\xi)\}_{k}$ (resp. $\{(\langle m_1^j,\xi\rangle,\xi^t\Sigma_1^j\xi)\}_{j}$) are pairwise distinct. 
Consequently, using the first part of the proof (for $d=1$), we can deduce that $K_0 = K_1$ and that
\begin{equation}
  \label{eq:inclusion}
\mathbb{R}^d\setminus \Theta_0\cup\Theta_1 \subset \bigcap_k\bigcup_j \Xi_{k,j}
\end{equation}
where 
\begin{equation*}
 \Xi_{k,j} = \left\{\xi,\;\;\text{s.t.}\;  \pi_0^k = \pi_1^j,\;\;\langle m_0^k - m_1^{j},\xi\rangle  = 0 \;\;\text{and}\;\; \xi^t\left(\Sigma_0^k - \Sigma_1^{j}\right)\xi=0\right\}.\label{eq:2}
\end{equation*}
Now, assume that the two sets $\{(\pi_0^k,m_0^k,\Sigma_0^k)\}_k$ and $\{(\pi_1^j,m_1^j,\Sigma_1^j)\}_j$ are different. Since each of these sets is composed of different triplets, it is equivalent to assume that there exists $k$ in  $\{1,\dots K_0\}$ such that $(\pi_0^k,m_0^k,\Sigma_0^k)$ is different from all triplets $(\pi_1^j,m_1^j,\Sigma_1^j)$. In this case, the sets $\Xi_{k,j}$ for $j = 1,\dots K_0$ are all of Lebesgue measure $0$ in $\mathbb{R}^d$, which contradicts~\eqref{eq:inclusion}. 
We conclude that the sets $\{(\pi_0^k,m_0^k,\Sigma_0^k)\}_k$ and $\{(\pi_1^j,m_1^j,\Sigma_1^j)\}_j$ are equal. 
\end{proof}

\bibliographystyle{siamplain}
\bibliography{biblio-otgmm.bib}

\end{document}